\definecolor{my_color}{rgb}{0,0.5,0.5}
\definecolor{MIXT}{rgb}{0.8,0.5,0.2}
\definecolor{mixt}{rgb}{0.5,0.3,0.2}
\definecolor{sin}{rgb}{0,0.5,0.5}
\definecolor{darkblue}{rgb}{0,0.1,0.8}
\definecolor{redi}{rgb}{0.5,0,0.4}
\numberwithin{equation}{section}
\font\tencyr=wncyr10 
\font\tencyi=wncyi10 
\font\tencysc=wncysc10 
\def\rus{\tencyr\cyracc}
\def\rusi{\tencyi\cyracc}
\def\rusc{\tencysc\cyracc}
\newtheorem{thm}{Theorem}[section]
\newtheorem{lm}[thm]{Lemma}
\newtheorem{cl}[thm]{Corollary}
\newtheorem{prop}[thm]{Proposition}
\newtheorem{conj}[thm]{Conjecture}
\newtheorem{qtn}[thm]{Question}
\theoremstyle{remark}
\newtheorem{rmk}[thm]{Remark}
\theoremstyle{definition}
\newtheorem{ex}[thm]{Example} 
\newtheorem{df}{Definition}
\newtheorem*{rema}{Remark}
\newcommand {\ce}{{\mathfrak c}}
\newcommand {\g}{{\mathfrak g}}
\newcommand {\h}{{\mathfrak h}}
\newcommand {\el}{{\mathfrak l}}
\newcommand {\q}{{\mathfrak q}}
\newcommand {\es}{{\mathfrak s}}
\newcommand {\te}{{\mathfrak t}}
\newcommand {\z}{{\mathfrak z}}
\newcommand{\gt}{\mathfrak}
\newcommand {\eus}{\EuScript}
\newcommand {\gA}{{\eus A}}
\newcommand {\gS}{{\eus S}}
\newcommand {\gZ}{{\eus Z}}
\newcommand {\esi}{\varepsilon}
\newcommand {\ap}{\alpha}
\newcommand {\lb}{\lambda}
\newcommand {\vp}{\varphi}
\newcommand {\ca}{{\mathcal A}}
\newcommand {\gc}{{\mathcal C}}
\newcommand {\cz}{{\mathcal Z}}
\newcommand {\BP}{{\mathbb P}}
\newcommand {\BV}{{\mathbb V}}
\newcommand {\BZ}{{\mathbb Z}}
\newcommand {\md}{/\!\!/}
\newcommand{\id}{{\mathsf{id}}}
\newcommand {\ad}{{\mathrm{ad\,}}}
\newcommand {\codim}{{\mathrm{codim\,}}}
\newcommand {\ind}{{\mathrm{ind\,}}}
\newcommand {\Lie}{{\mathsf{Lie\,}}}
\newcommand {\rk}{{\mathsf{rk\,}}}
\newcommand {\crk}{{\mathsf{rkc}}}
\newcommand {\trdeg}{{\mathrm{tr.deg\,}}}
\newcommand {\tri}{\mathfrak{sl}_2}
\newcommand {\un}{\underline}
\newcommand {\bb}{{\boldsymbol{b}}}
\newcommand {\bx}{\boldsymbol{\zeta}}
\newcommand {\cW}{{\mathcal W}}
\newcommand {\An}{{\mathbb A^n}}
\newcommand {\PC}{Poisson commutative}
\newcommand {\beq}{\begin{equation}}
\newcommand {\eeq}{\end{equation}}
\renewcommand{\le}{\leqslant}
\renewcommand{\ge}{\geqslant}
\newcommand {\bbk}{\Bbbk}
\begin{document}
\setlength{\parskip}{3pt plus 2pt minus 0pt}
\hfill { {\color{blue}\scriptsize September 2, 2018}}
\vskip1ex

\title[Poisson-commutative subalgebras]
{Poisson-commutative subalgebras  of $\gS(\g)$ associated with involutions} 
\author[D.\,Panyushev]{Dmitri I. Panyushev}
\address[D.\,Panyushev]%
{Institute for Information Transmission Problems of the R.A.S, Bolshoi Karetnyi per. 19, 
Moscow 127051, Russia}
\email{panyushev@iitp.ru}
\author[O.\,Yakimova]{Oksana S.~Yakimova}
\address[O.\,Yakimova]{Universit\"at zu K\"oln,
Mathematisches Institut, Weyertal 86-90, 50931 K\"oln, Deutschland}
\email{yakimova.oksana@uni-koeln.de}
\thanks{The research of the first author is partially supported by the R.F.B.R. grant {\rus N0} 16-01-00818. 
The second author is supported by  the DFG (Heisenberg--Stipendium).}
\keywords{Poisson-commutative subalgebras, coadjoint representation, symmetric pairs}
\subjclass[2010]{17B63, 14L30, 17B08, 17B20, 22E46}
\maketitle
\begin{abstract}
The symmetric algebra $\gS(\g)$ of a reductive Lie algebra $\g$ is equipped with the standard 
Poisson structure, i.e., the Lie--Poisson bracket.  
Poisson-commutative subalgebras of  $\gS(\g)$ attract a great deal of attention, because of
their relationship to integrable systems and, more recently, to geometric representation theory. 
The transcendence degree of  a Poisson-commutative subalgebra  ${\mathcal C}\subset\gS(\g)$
is bounded by the ``magic number'' $\bb(\g)$ of $\g$. 
The ``argument shift method'' of Mishchenko--Fomenko was basically the only known source of 
$\mathcal C$ with $\trdeg{\mathcal C}=\bb(\g)$. We introduce an essentially different construction related 
to symmetric decompositions $\g=\g_0\oplus\g_1$. Poisson-commutative subalgebras 
$\cz,\tilde\cz\subset\gS(\g)^{\g_0}$ of the maximal possible transcendence degree are presented. If the $\BZ_2$-contraction $\g_0\ltimes\g_1^{\sf ab}$ has a polynomial ring of symmetric invariants, then
$\tilde\cz$ is  
a polynomial maximal  Poisson-commutative subalgebra of $\gS(\g)^{\g_0}$, and its free generators
are explicitly described. 
\end{abstract}

\tableofcontents

\section*{Introduction}

\noindent
The ground field $\bbk$ is algebraically closed and of characteristic $0$.   
A commutative associative $\bbk$-algebra ${\ca}$  is a {\it Poisson algebra\/} if 
there is   an additional anticommutative bilinear operation
$\{\,\,,\,\}\!:\,{\ca}\times{\ca}\to{\ca}$ called a {\it Poisson bracket} such that  
\[
\begin{array}{cl}
\{a,bc\}=\{a,b\}c+b\{a,c\}, &  \text{(the Leibniz rule)} \\
\{a,\{b,c\}\}+\{b,\{c,a\}\}+\{c,\{a,b\}\}=0 & \text{(the Jacobi identity)}
\end{array}
\]
for all $a,b,c\in{\ca}$. A subalgebra ${\mathcal C}\subset{\ca}$ is 
{\it Poisson-commutative} if $\{\mathcal{C},\mathcal{C}\}=0$.  
The {\it Poisson centre} $\mathcal{ZA}$ of ${\ca}$ is defined by the condition 
$\mathcal{ZA}=\{z\in\ca \mid \{z,a\}=0  \ \forall a\in{\ca}\}$. 

Usually, Poisson algebras occur as algebras of functions on varieties (manifolds), and 
we are only interested in the case, where such a variety is an affine $n$-space $\An$ and hence
$\ca=\bbk[\An]$ is a polynomial ring in $n$ variables. Two Poisson brackets on $\An$ are said to be 
{\it compatible}, if all their linear combinations are again Poisson brackets. 
 
There is a general method for constructing a ``large'' Poisson-commutative subalgebra of $\ca$ 
associated with a pair of compatible brackets, see e.g.~\cite{bol-bor}. Let $\{\ ,\ \}'$ and $\{\ ,\ \}''$ be 
compatible Poisson brackets on $\An$.
This yields a two parameter family of Poisson brackets $a\{\ ,\ \}'+b\{\ ,\ \}''$, $a,b\in\bbk$. As we are only 
interested in the corresponding Poisson centres, it is convenient to organise this, up to scaling, in a 
1-parameter family $\{\ ,\ \}_t=\{\ ,\ \}'+t\{\ ,\ \}''$, $t\in\BP=\bbk\cup\{\infty\}$, where $t=\infty$ 
corresponds to the bracket $\{\ ,\ \}''$.  
The {\it central  rank\/} $\crk\{\,\,,\,\}$ of a Poisson bracket $\{\,\,,\,\}$ is defined as the codimension of a symplectic leaf in general position, see Definition~\ref{def-crk}. 
For almost
all $t\in\BP$, $\crk\{\,\,,\,\}_t$ has one and the same (minimal) value, and we set
$\BP_{\sf reg}=\{t\in \BP\mid \crk\{\,\,,\,\}_t \text{ is minimal}\}$, $\BP_{\sf sing}=\BP\setminus \BP_{\sf reg}$. 
Let $\cz_t$ denote the centre of $(\ca,\{\ ,\ \}_t)$. 
The key fact is that the algebra
$\gZ$ generated by $\{\cz_t\mid t\in\BP_{\sf reg}\}$ is Poisson-commutative w.r.t{.} to any bracket in the family. 
In many cases, this construction provides a Poisson-commutative subalgebra of $\ca$ of maximal transcendence degree.
We demonstrate this with a well-known important example.

\begin{ex}          \label{ex:q-compatible}
For any finite-dimensional Lie algebra $\q$, the dual space $\gt q^*$ has a Poisson structure.  
Here $\bbk[\gt q^*]\cong \gS(\gt q)$ and the Lie--Poisson bracket $\{\ ,\ \}_{\sf LP}$ is defined by
$\{\xi,\eta\}_{\sf LP}=[\xi,\eta]$ for $\xi,\eta\in\gt q$. 
The Poisson centre of $(\gS(\gt q), \{\ ,\ \}_{\sf LP})$ coincides with the ring $\gS(\gt q)^{\gt q}$ of 
symmetric $\gt q$-invariants. The celebrated ``argument shift method'', which goes back to 
Mishchenko--Fomenko \cite{mf}, provides large Poisson-commutative subalgebras of $\gS(\q)$ 
starting from the Poisson centre $\gS(\q)^\q$. Given $\gamma\in\q^*$, the $\gamma$-shift of argument 
produces the {\it Mishchenko--Fomenko subalgebra\/} ${\ca}_\gamma$. Namely, for 
$F\in\gS(\q)=\bbk[\q^*]$, let $\partial_{\gamma} F$ be the directional derivative of $F$ 
with respect to $\gamma$, i.e., 
\[
    \partial_{\gamma}F(x)=\frac{\textsl{d}}{\textsl{d}t} F(x+t\gamma)\Big|_{t=0}.
\]
Then ${\ca}_\gamma$ is generated by all $\partial_\gamma^k F$ with $k\ge 0$ for all  
$F\in\gS(\q)^{\q}$. The core of this method is that for any $\gamma\in\gt q^*$ 
there is the Poisson bracket $\{\,\,,\,\}_{\gamma}$ on $\q^*$ such that 
$\{\xi,\eta\}_{\gamma}=\gamma([\xi,\eta])$ for $\xi,\eta\in\q$, and that this new 
bracket is compatible with $\{\ ,\ \}_{\sf LP}$.
One can prove that $\crk\{\,\,,\,\}_t$ takes one and the same value  
for all $\{\ ,\ \}_t=\{\ ,\ \}_{\sf LP}+t\{\,\,,\,\}_{\gamma}$ with $t\in\bbk$, i.e., $\bbk\subset\BP_{\sf reg}$, and
${\ca}_\gamma$ is generated by all the corresponding  centres $\cz_t$, $t\in\bbk$. (Actually, $\BP_{\sf reg}=\BP$ if and only if
$\gamma$ is regular in $\gt q^*$.)
The importance of these subalgebras and their quantum counterparts 
is explained e.g. in~\cite{FFR,v:sc}. If $\q$ is reductive and $\gamma$ is regular, then
${\ca}_\gamma$ is a maximal Poisson-commutative subalgebra of $\gS(\q)$~\cite{mrl}.
\end{ex}

Our main object is a certain 1-parameter family of Poisson brackets on the dual of a semisimple Lie algebra 
$\g$. Let $\sigma$ be an involution of $\g$ and $\g=\g_0 \oplus \g_1$ the corresponding 
$\BZ_2$-{\it grading\/} (or {\it symmetric decomposition}). We also say that $(\g,\g_0)$ is a {\it symmetric 
pair}. Without loss of generality, we may assume that the pair $(\g,\sigma)$ is 
{\it indecomposable}, i.e., $\g$ has no proper 
$\sigma$-stable ideals. Then either $\g$ is simple or $\g=\h\oplus\h$, where $\h$ is simple and 
$\sigma$ is a permutation. Our family of Poisson brackets is related to the decomposition:
\[
   \{\,\,,\,\}_{\sf LP}=\{\,\,,\, \}_{0,0}+\{\,\,,\,\}_{0,1}+\{\,\,,\,\}_{1,1},
\]
where $\{\,\,,\,\}_{i,j}=[\,\,,\,]_{i,j}\!: \g_i\times\g_j \to \g_{i+ j}$ for $i,j\in\BZ_2\simeq \{0,1\}$, see 
Section~\ref{sect:2} for details. Using this, we consider the $1$-parameter family of Poisson brackets 
on $\g^*$:
\beq     \label{eq:family}
   \{\,\,,\,\}_{t}=\{\,\,,\, \}_{\sf 0,0}+\{\,\,,\,\}_{\sf 0,1}+t\{\,\,,\,\}_{\sf 1,1},
\eeq
where $t\in \BP$ and $\{\,\,,\,\}_{\infty}=\{\,\,,\,\}_{\sf 1,1}$.
Each element of this family is a Poisson bracket and here $\BP_{\sf reg}=\bbk$ unless $\g=\tri$. 
For $\tri$, one has $\BP_{\sf reg}=\BP$, and this case has to be considered separately. Nevertheless, 
the final result can be stated uniformly, for all simple $\g$, see below.

Let $\cz_t$ ($t\in\BP$) denote the centre of $(\gS(\g), \{\,\,,\,\}_{t})$ and
$\gZ$  the subalgebra of $\gS(\g)$ generated by all $\cz_t$  
with $t\in \BP_{\sf reg}$. Then $\{\gZ,\gZ\}_{\sf LP}=0$. Moreover, 
$\{\g_0,\gZ\}_{\sf LP}=0$, i.e., $\gZ$ is a \PC\ subalgebra of $\gS(\g)^{\g_0}$.
By~\cite[Prop.\,1.1]{m-y}, we have
\[
   \trdeg{\mathcal C}\le \frac{1}{2}(\dim\g_1+\rk\g+\rk\g_0)
\]
for any Poisson-commutative subalgebra ${\mathcal C}\subset\gS(\g)^{\g_0}$. 
We prove that this upper bound is attained for $\gZ$, see Theorem~\ref{prop:main2}.

The computation of $\trdeg \gZ$ is completely general and is valid for any $\sigma$. However, this is not 
the case with more subtle properties. Our goal is to realise whether $\gZ$ is polynomial and is 
maximal \PC\ in $\gS(\g)^{\g_0}$. For $t=0$ in Eq.~\eqref{eq:family}, one obtains the Lie--Poisson
bracket of the Lie algebra $\g_{(0)}:=\g_0\ltimes \g_1^{\sf ab}$. The symmetric invariants of $\g_{(0)}$
have intensively been studied in \cite{coadj, contr,Y-imrn}. The output is that there are four ``bad'' 
involutions of a simple $\g$ in which $\gS(\g_{(0)})^{\g_{(0)}}$ is not polynomial. These cases are 
related to $\g$ of type $\eus E_n$. In all other cases, $\gS(\g)^\g$ has a {\it good generating system} 
(=\,{\sf g.g.s.}) for $(\g,\g_0)$, say $H_1,\dots,H_l$ ($l=\rk\g$), and a set of free 
generators of $\gS(\g_{(0)})^{\g_{(0)}}$ is then obtained from the $H_i$'s via a simple procedure, see 
Section~\ref{sect:3} for details.
\\ \indent
In the rest of the introduction, we assume that $\sigma$ is ``good'' and $\g\ne\tri$. In particular, there 
is a {{\sf g.g.s.}} for $(\g,\g_0)$. 
This is of vital importance for us, because we then prove that
$\gZ$ is freely generated by the nonzero bi-homogeneous components of all
$H_i$'s and is therefore polynomial, see Theorems~\ref{thm:main3-2} and~\ref{free-main}.  
Let $r_0\!: \gS(\g)^{\g}\to \gS(\g_0)^{\g_0}$ be the restriction homomorphism
related to the embedding $\g^*_0\hookrightarrow \g^*= \g^*_0\oplus \g^*_1$.
Furthermore,
\\ \indent
\textbullet \quad $\gZ$ is a maximal \PC\ subalgebra of $\gS(\g)^{\g_0}$ if and only if $r_0$
is onto, see Theorem~\ref{thm:maxim1}.
\\ \indent
 \textbullet \quad  In general, let $\tilde\gZ$ be the subalgebra of $\gS(\g)$ generated by 
$\gZ$ and $\gS(\g_0)^{\g_0}$. (Hence $\tilde\gZ=\gZ$ if and only if $r_0$ is onto.)
We prove that $\tilde\gZ$ is still polynomial  and that 
it is a maximal \PC\ subalgebra of $\gS(\g)^{\g_0}$, see Theorem~\ref{thm:maxim2}. 
This statement also embraces the $\tri$-case, because then $\gZ=\tilde \gZ$ is polynomial, etc.

In Section~\ref{fancy}, we present a Poisson interpretation of the Kostant regularity criterion for 
$\g$~\cite[Theorem~9]{ko63} and give new related formulas arising from $\BZ_2$-gradings and compatible Poisson structures. As a by-product, we describe $\cz_\infty$ for all $\sigma$. 

Section~\ref{sec-q} contains a discussion of possible quantisations of $\gZ$ and $\tilde\gZ$,
i.e., their lifting to the enveloping algebra $\eus U(\g)$. We conjecture that quantum analogues of these algebras may have applications in representation theory, and more explicitly, in the branching problem 
$\g\downarrow\g_0$.  In Section~\ref{sec-cl}, it is explained how to construct a 
polynomial maximal  Poisson-commutative subalgebra of $\gS(\g)$ related to 
a chain of symmetric subalgebras
\[
\g=\g^{(0)}\supset \g^{(1)}\supset \g^{(2)} \supset\ldots \supset \g^{(m)}
\]
with $[\g^{(m)},\g^{(m)}]=0$. 

In the Appendix, we gather auxiliary results on the kernels of a 1-parameter family of skew-symmetric bilinear forms on a vector space.

We refer to \cite{duzu} for generalities on Poisson varieties, Poisson tensors, symplectic leaves, etc.

\section{Preliminaries on the coadjoint representation}
\label{sect:prelim}

\noindent
Let $Q$\/ be a connected affine algebraic group with Lie algebra $\q$. The symmetric algebra 
$\gS(\q)$ over $\bbk$ is identified with the graded algebra of polynomial functions on $\q^*$, and we also 
write $\bbk[\q^*]$ for it.  
\\ \indent
Let $\q^\xi$ denote the stabiliser in $\q$ of $\xi\in\q^*$. The {\it index of}\/ $\q$, $\ind\q$, is the minimal codimension of $Q$-orbits in $\q^*$. Equivalently,
$\ind\q=\min_{\xi\in\q^*} \dim \q^\xi$. By Rosenlicht's theorem~\cite[2.3]{VP}, one also has
$\ind\q=\trdeg\bbk(\q^*)^Q$. The ``magic number'' associated with $\q$ is $\bb(\q)=(\dim\q+\ind\q)/2$. 
Since the coadjoint orbits are even-dimensional, the magic number is an integer. If $\q$ is reductive, then  
$\ind\q=\rk\q$ and $\bb(\q)$ equals the dimension of a Borel subalgebra. The Lie--Poisson bracket on
$\bbk[\q^*]$ is defined on the elements of degree $1$ (i.e., on $\q$) by $\{x,y\}_{\sf LP}:=[x,y]$. 
The {\it Poisson centre\/} of $\gS(\q)$ is 
\[
    \gS(\q)^\q=\{H\in \gS(\q)\mid \{H,x\}_{\sf LP}=0 \ \ \forall x\in\q\} .
\] 
As $Q$ is connected, we have $\gS(\q)^\q=\gS(\q)^{Q}=\bbk[\q^*]^Q$.
The set of $Q$-{\it regular\/} elements of $\q^*$ is 
\beq       \label{eq:regul-set}
    \q^*_{\sf reg}=\{\eta\in\q^*\mid \dim \q^\eta=\ind\q\} .
\eeq
Set $\q^*_{\sf sing}=\q^*\setminus \q^*_{\sf reg}$.
We say that $\q$ has the {\sl codim}--$n$ property if $\codim \q^*_{\sf sing}\ge n$.  By~\cite{ko63}, the semisimple algebras $\g$ have the {\sl codim}--$3$ property.

\subsection{The Poisson tensor}    
\label{subs:P-tensor}
Let $\Omega^i$ be the $\ca$-module of differential $i$-forms on $\An$. Then
$\Omega=\bigoplus_{i=0}^n \Omega^i$ is the $\ca$-algebra of regular   
differential forms on $\An$. Likewise,
$\cW=\bigoplus_{i=0}^n \cW^i$  is the graded skew-symmetric algebra  
of polyvector fields generated by the $\ca$-module $\cW^1$ of polynomial vector fields on $\An$.
Both algebras are free $\ca$-modules. 
If $\An$ has a Poisson structure $\{\ ,\ \}$, then $\pi$ is the corresponding {\it Poisson tensor (bivector)}.
That is,  $\pi\in \operatorname{Hom}_{\ca}(\Omega^2,{\ca})$ is defined by the equality
$\pi(\textsl{d}f\wedge \textsl{d}g)=\{f,g\}$ for $f,g\in{\ca}$. 
Then $\pi(x)$, $x\in\An$, defines a skew-symmetric bilinear form on $T^*_x(\An)\simeq
(\An)^*$.  Formally, if $v=\textsl{d}_xf$ and $u=\textsl{d}_xg$ for $f,g\in\ca$, then
$\pi(x)(v,u)=\pi(\textsl{d}f\wedge \textsl{d}g)(x)=\{f,g\}(x)$.

\begin{df} \label{def-crk}
The {\it central rank\/} of a Poisson bracket $\{\,\,,\,\}$ on $\An$, denoted $\crk\{\,\,,\,\}$, is the 
minimal codimension of the symplectic leaves in $\An$.   
\end{df}
\vskip-1ex
It is easily seen that if $\pi$ is the corresponding Poisson tensor, then \\
\centerline{$\crk\{\,\,,\,\}=\min_{x\in\An} \dim  \ker \pi(x)=n-\max_{x\in\An}\rk \pi(x)$.}

{\bf Example.} For a Lie algebra $\q$ and the dual space $\q^*$ equipped with the Lie--Poisson bracket 
$\{\,\,,\,\}_{\sf LP}$, the symplectic leaves are the coadjoint $Q$-orbits. Hence 
$\crk\{\,\,,\,\}_{\sf LP}=\ind\q$.

In view of the duality between differential 1-forms and vector 
fields, we may regard $\pi$ as an element of $\cW^2$. Let $[[\ ,\ ]]: \cW^i\times \cW^j \to \cW^{i+j-1}$
be the Schouten bracket.
The Jacobi identity for $\pi$ is equivalent to that 
$[[\pi,\pi]]=0$, see e.g.~\cite[Chapter\,1.8]{duzu}. 

\begin{lm}     \label{lm:compat}
Two Poisson brackets $\{\ ,\ \}'$ and $\{\ ,\ \}''$ are compatible if and only if a sole
linear combination, non-proportional to either of the initial brackets, is a Poisson bracket. 
\end{lm}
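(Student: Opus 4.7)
The key observation is that the Jacobi identity for a Poisson bracket $\{\,\,,\,\}$ is equivalent to $[[\pi,\pi]]=0$ for the associated Poisson bivector $\pi\in\cW^2$, as noted just before the lemma. Since the Schouten bracket $[[\ ,\ ]]\colon \cW^2\times\cW^2\to\cW^3$ is $\bbk$-bilinear and (graded-)symmetric on $\cW^2$, this is the only ingredient we need.

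The plan is as follows. Let $\pi'$ and $\pi''$ be the Poisson bivectors corresponding to $\{\ ,\ \}'$ and $\{\ ,\ \}''$. By bilinearity of the Schouten bracket, for any $a,b\in\bbk$,
\begin{equation*}
[[a\pi'+b\pi'',\,a\pi'+b\pi'']]=a^2[[\pi',\pi']]+2ab[[\pi',\pi'']]+b^2[[\pi'',\pi'']].
\end{equation*}
Since $\pi'$ and $\pi''$ are already Poisson, the outer two terms vanish, so
\begin{equation*}
[[a\pi'+b\pi'',\,a\pi'+b\pi'']]=2ab[[\pi',\pi'']].
\end{equation*}
In particular, the bivector $a\pi'+b\pi''$ is Poisson if and only if either $ab=0$ or $[[\pi',\pi'']]=0$.

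The ``only if'' direction of the lemma is then immediate: compatibility means $a\pi'+b\pi''$ is Poisson for all $a,b$, which certainly furnishes a Poisson bracket non-proportional to $\{\ ,\ \}'$ and $\{\ ,\ \}''$ (take $a=b=1$, say). For the ``if'' direction, suppose some non-trivial linear combination $\alpha\{\ ,\ \}'+\beta\{\ ,\ \}''$, non-proportional to either generator, is a Poisson bracket; then both $\alpha,\beta$ are nonzero, and the displayed identity gives $2\alpha\beta[[\pi',\pi'']]=0$, hence $[[\pi',\pi'']]=0$. Plugging this back, every linear combination $a\pi'+b\pi''$ then satisfies $[[a\pi'+b\pi'',\,a\pi'+b\pi'']]=0$, so the two brackets are compatible.

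There is no real obstacle here; the only subtlety is simply interpreting ``non-proportional to either of the initial brackets'' as ensuring that both coefficients $\alpha$ and $\beta$ are nonzero, which is precisely what is needed to cancel the factor $\alpha\beta$. The whole proof is essentially the observation that compatibility of two Poisson structures is equivalent to the single condition $[[\pi',\pi'']]=0$.
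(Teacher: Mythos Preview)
Your proof is correct and follows essentially the same approach as the paper's: both reduce the question to the vanishing of $[[\pi',\pi'']]$ by expanding $[[a\pi'+b\pi'',\,a\pi'+b\pi'']]$ via the bilinearity and symmetry of the Schouten bracket. You are slightly more explicit in writing out the expansion and in separating the two implications, but the argument is the same.
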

\begin{proof}
In place of  Poisson brackets, we may consider the corresponding Poisson tensors. Given two tensors
$\pi'$ and $\pi''$, consider $R=a\pi'+b\pi''$ with $a,b\in \bbk^\times$. Then  $R$ is a Poisson tensor if and only if $[[R,R]]=0$. In view of the fact that $[[\pi',\pi'']]=[[\pi'',\pi']]$, this reduces to the condition 
$[[\pi',\pi'']]=0$ regardless of nonzero $a,b$.
\end{proof}

\subsection{Contractions and compatibility}     \label{subs:contr-compat}
Let  $\q=\h \oplus V$ be a vector space decomposition, 
where $\h$ is a subalgebra. For any $s\in\bbk^{\times}$, define a linear map 
$\varphi_s\!:\gt q\to\gt q$ by 
setting $\varphi_s\vert_{\h}=\id$, $\varphi_s\vert_{V}=s{\cdot}\id$. Then $\vp_s\vp_{s'}=\vp_{ss'}$ and
$\varphi_s^{-1}=\varphi_{s^{-1}}$, i.e., this yields a one-parameter subgroup of ${\rm GL}(\gt q)$. The 
invertible map $\varphi_s$ defines a new (isomorphic to the initial) Lie algebra structure $[\,\,,\,]_{(s)}$ 
on the same vector space $\q$ by the formula 
\beq     \label{eq:fi_s}
    [x,y]_{(s)}=\vp_s^{-1}([\vp_s(x),\varphi_s(y)]).
\eeq
The corresponding Poisson bracket is $\{\ ,\ \}_{s}$. We naturally extend $\vp_s$ to an automorphism of 
$\gS(\q)$.  Then the centre of the Poisson algebra $(\gS(\q),\{\,\,,\,\}_s)$ equals
$\vp_s^{-1}(\gS(\q)^{\q})$. 

The condition $[\h,\h]\subset \h$ implies that there is the limit of the brackets $[\ ,\ ]_{(s)}$
as $s$ tends to zero. The limit bracket is denoted by $[\ ,\ ]_{(0)}$ and the corresponding Lie algebra 
is the semi-direct product $\h{\ltimes}V^{\sf ab}$, where $[V^{\sf ab},V^{\sf ab}]_{(0)}=0$.
The algebra $\h \ltimes V^{\sf ab}$ is called an {\it In\"on\"u-Wigner} or 
{\it one-parameter contraction\/} of $\q$, see e.g. \cite{alafe,contr}.

Having a family of Poisson brackets $\{\,\,,\,\}_s$ on $\q^*$ associated with the 
maps $\varphi_s$, it is natural to ask whether these brackets are compatible. 

\begin{lm}           \label{lm:comp-bra}    
As above, let $\q=\h\oplus V$, where $\h\subset\q$ is a subalgebra. Let $s,s'\in\bbk$.
\begin{itemize}
\item[\sf (i)] \ If $(\q,\h)$ is a symmetric pair, 
i.e., $[\h,V]\subset V$ and $[V,V]\subset\h$, then $\{\,\,,\,\}_s=\{\,\,,\,\}_{-s}$ and 
$\{\,\,,\,\}_s+\{\,\,,\,\}_{s'}=2\{\,\,,\,\}_{\tilde s}$ with $2\tilde s^2=s^2+(s')^2$. 
\item[\sf (ii)] \ If\/ $[V,V]\subset V$, i.e., $V$ is a subalgebra, too, then $\{\,\,,\,\}_s+\{\,\,,\,\}_{s'}=2\{\,\,,\,\}_{\tilde s}$ with $2\tilde s=s+ s'$. 
\end{itemize}
\end{lm}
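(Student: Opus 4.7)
The plan is to compute $[x,y]_{(s)}$ explicitly from \eqref{eq:fi_s}, decompose it into its $\h$- and $V$-components, and then read off both assertions by matching coefficients in $s$.

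First, I decompose $x = x_0+x_1$, $y = y_0+y_1$ according to $\q = \h\oplus V$. By bilinearity,
\[
   [\varphi_s(x),\varphi_s(y)] = [x_0,y_0] + s\bigl([x_0,y_1]+[x_1,y_0]\bigr) + s^2\,[x_1,y_1].
\]
Since $\varphi_s^{-1}$ acts by $\id$ on $\h$ and by $s^{-1}\cdot\id$ on $V$, the projections of $[x,y]_{(s)}$ onto $\h$ and $V$ are, respectively,
\[
   [x_0,y_0] + s\,\pi_\h\bigl([x_0,y_1]+[x_1,y_0]\bigr) + s^2\,\pi_\h[x_1,y_1]
\]
and
\[
   \pi_V\bigl([x_0,y_1]+[x_1,y_0]\bigr) + s\,\pi_V[x_1,y_1],
\]
where $\pi_\h$ and $\pi_V$ denote the projectors on the two summands. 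This single identity is the heart of the argument.

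In part (i), the symmetric-pair conditions $[\h,V]\subset V$ and $[V,V]\subset\h$ will force $\pi_\h([x_0,y_1]+[x_1,y_0])$ and $\pi_V[x_1,y_1]$ to vanish, leaving $[x,y]_{(s)} = [x_0,y_0] + ([x_0,y_1]+[x_1,y_0]) + s^2[x_1,y_1]$. Only the square $s^2$ appears, which immediately yields $\{\,,\,\}_s = \{\,,\,\}_{-s}$; averaging the reduced formulas for parameters $s$ and $s'$ and comparing with that for $\tilde s$ then forces $\tilde s^2 = \tfrac12(s^2+(s')^2)$. In part (ii), the condition $[V,V]\subset V$ will instead kill only $\pi_\h[x_1,y_1]$, so $[x,y]_{(s)}$ is affine-linear in $s$, and the averaging identity $2\tilde s = s+s'$ follows at once from the same coefficient comparison.

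The only real obstacle is the bookkeeping: correctly decomposing each commutator $[x_i,y_j]$ into its $\h$- and $V$-parts and tracking how $\varphi_s^{-1}$ rescales them. Once the two displayed formulas above are in hand, both claims reduce to matching coefficients of $s^0,s^1,s^2$. Finally, since $\{\,,\,\}_s$ is by definition the Lie--Poisson bracket of $[\,,\,]_{(s)}$, the bracket identities on $\q$ transfer automatically to the claimed identities among Poisson brackets on $\gS(\q)$.
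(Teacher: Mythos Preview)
Your proof is correct and is precisely the ``easy direct computation'' the paper invokes; the paper gives no details beyond that phrase, and your decomposition of $[x,y]_{(s)}$ into $\h$- and $V$-components followed by coefficient matching is exactly what is meant. One minor remark: your derivation of the formula for $[x,y]_{(s)}$ uses $\varphi_s^{-1}$ and so implicitly assumes $s\ne 0$, but since the resulting expression is polynomial in $s$, it extends to $s=0$ and agrees with the limit bracket $[\,,\,]_{(0)}$, so the statement for all $s,s'\in\bbk$ is covered.
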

\begin{proof}
All statements are verified by an easy direct computation. 
\end{proof}

In this article, we are interested in case {\sf (i)} of Lemma~\ref{lm:comp-bra} 
under the assumption that $\q$ is semisimple.

\section{Constructing a Poisson-commutative subalgebra $\gZ$}
\label{sect:2}

\noindent
Let $\g$ be a $\BZ_2$-graded semisimple Lie algebra and $\sigma$ the corresponding
involution of $\g$, i.e., $\g=\g_0\oplus\g_1$ and $\sigma(x)=(-1)^j x$ for $x\in \g_j$. Occasionally, we will 
need the related connected algebraic groups $G$ and $G_0$, i.e., $\g=\Lie(G)$ and $\g_0=\Lie(G_0)$. 
We may assume that $G_0\subset G$.
Under the presence of $\sigma$, the Lie--Poisson bracket is being decomposed as follows:
\[
   \{\,\,,\,\}_{\sf LP}=\{\,\,,\,\}_{\sf 0,0}+\{\,\,,\,\}_{\sf 0,1}+\{\,\,,\,\}_{\sf 1,1}.
\]
More precisely, if $x=x_0+x_1\in \g$, then $\{x,y\}_{0,0}=[x_0,y_0]$,
$\{x,y\}_{0,1}=[x_0,y_1]+[x_1,y_0]$, and $\{x,y\}_{1,1}=[x_1,y_1]$.
Using this decomposition, we introduce a $1$-parameter family of Poisson brackets on $\g^*$:
\[
   \{\,\,,\,\}_{t}=\{\,\,,\,\}_{\sf 0,0}+\{\,\,,\,\}_{\sf 0,1}+t\{\,\,,\,\}_{\sf 1,1},
\] 
where $t\in \BP=\bbk\cup\{\infty\}$ and $\{\,\,,\,\}_{\infty}=\{\,\,,\,\}_{\sf 1,1}$.
It is easily seen that  $\{\,\,,\,\}_{t}$ with $t\in \bbk^\times$ is given by the map $\vp_s$, where $s^2=t$
(see Section~\ref{subs:contr-compat}), and it follows from Lemmas~\ref{lm:compat} and 
\ref{lm:comp-bra} that all these brackets are compatible. Hence 
\[
    \{\,\,,\,\}_{t}=\{\,\,,\,\}_{0}+t\{\,\,,\,\}_{\infty}, \quad t\in\BP, 
\]
in accordance with the general method outlined in the introduction,
Note that $\{\,\,,\,\}_{{\sf LP}}=\{\,\,,\,\}_{0}+\{\,\,,\,\}_{\infty}$. Write $\g_{(t)}$ for the Lie algebra 
corresponding to $ \{\,\,,\,\}_{t}$. Of course, we merely write $\g$ in place of $\g_{(1)}$.
All Lie algebras $\g_{(t)}$ have the same underlying vector space $\g$. 
\\ \indent
{\bf Convention.} 
We identify $\g$,\,$\g_0$, and $\g_1$ with their duals via the Killing form on $\g$. Hence
$\g^*_0\oplus\g^*_1\simeq \g_0\oplus\g_1$. We regard $\g^*$ as the dual of any algebra $\g_{(t)}$
and sometimes omit the subscript `$(t)$' in $\g_{(t)}^*$.  
However, if $\xi\in\g^*$,
then the stabiliser of $\xi$ in the Lie algebra $\g_{(t)}$ (i.e., with respect to the coadjoint representation
of $\g_{(t)}$) is denoted by $\g_{(t)}^\xi$.

Let $\pi_t$ be the Poisson tensor for $\{\,\,,\,\}_{t}$ and
$\pi_t(\xi)$  the skew-symmetric bilinear form on $\g\simeq T^*_\xi(\g^*)$ corresponding to $\xi\in\g^*$,
cf. Section~\ref{subs:P-tensor}. A down-to-earth description is that
$\pi_t(\xi)(x_1,x_2)=\{x_1,x_2\}_{(t)}(\xi)$. Set $\rk\pi_t=\max_{\xi\in\g^*}\rk\pi_t(\xi)$.

\begin{lm}       \label{lm:rank-ot-t}
We have \ $\ind \g_{(t)}=\crk \{\,\,,\,\}_{t}=\begin{cases}  \rk\g, &  t\ne\infty; \\
\dim\g_0+\rk\g-\rk\g_0, & t=\infty. \end{cases}$    
\end{lm}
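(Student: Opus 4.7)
The plan has three ingredients. First, the equality $\crk\{\,,\,\}_t=\ind\g_{(t)}$ is immediate from the Example in Section~\ref{subs:P-tensor}: by construction $\{\,,\,\}_t$ is the Lie--Poisson bracket of $\g_{(t)}$, whose symplectic leaves are coadjoint orbits. So the task reduces to computing $\ind\g_{(t)}$ in each of the three cases.

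For $t\in\bbk^\times$, the map $\vp_s$ with $s^2=t$ from~\eqref{eq:fi_s} is a Lie algebra isomorphism $\g_{(t)}\isom\g$, giving $\ind\g_{(t)}=\ind\g=\rk\g$. For $t=0$, the contracted algebra $\g_{(0)}=\g_0\ltimes\g_1^{\sf ab}$ satisfies $\ind\g_{(0)}=\rk\g$ by the results of~\cite{coadj,contr,Y-imrn} on $\BZ_2$-contractions; the inequality $\ind\g_{(0)}\ge\rk\g$ alone follows from upper semi-continuity of index along the family $\{\g_{(s)}\}_{s\in\bbk}$, but the matching upper bound uses the symmetric-pair input from those prior works.

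The substantial case is $t=\infty$. Since $[x,y]_\infty=[x_1,y_1]\in\g_0$, the subspace $\g_0$ is central in $\g_{(\infty)}$ and the algebra is two-step nilpotent. A direct computation of the coadjoint stabiliser of $\xi=\xi_0+\xi_1\in\g_0^*\oplus\g_1^*$ yields
\[
   \g_{(\infty)}^\xi \,=\, \g_0 \oplus \ker\omega_{\xi_0},
   \qquad \omega_{\xi_0}(x,y):=\xi_0([x,y]) \text{ on }\g_1\times\g_1,
\]
so the stabiliser depends only on $\xi_0$. Writing $\xi_0=\kappa(h,-)$ with $h\in\g_0$ and using invariance of the Killing form, I would verify $\ker\omega_h=\g_1\cap\z_\g(h)$. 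The lemma thus reduces to proving $\min_{h\in\g_0}\dim(\g_1\cap\z_\g(h))=\rk\g-\rk\g_0$.

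For the upper bound, I plan to fix a Cartan subalgebra $\h_0$ of $\g_0$ and analyse $\z_\g(\h_0)$: it is $\sigma$-stable reductive with $\g_0$-part $\z_{\g_0}(\h_0)=\h_0$, so $\z_\g(\h_0)=\h_0\oplus\ell$ with $\ell\subset\g_1$; the symmetric-pair relations force $[\h_0,\ell]=0$ and $[\ell,\ell]\subset\g_0\cap\z_\g(\h_0)=\h_0$, and since the derived subalgebra of a reductive Lie algebra cannot lie in an abelian subalgebra unless it vanishes, $\z_\g(\h_0)$ is abelian. Hence $\z_\g(\h_0)=\te$ is a Cartan of $\g$ with $\te_0=\h_0$ and $\dim\te_1=\rk\g-\rk\g_0$, and for generic $h\in\h_0$ one has $\z_\g(h)=\te$. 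For the lower bound, upper semi-continuity of $\dim(\g_1\cap\z_\g(h))$ concentrates the minimum on a dense open subset of $\g_0$ containing semisimple elements; any such $h$ lies in some $\sigma$-stable Cartan $\te'$ of $\g$ (taken inside the $\sigma$-stable reductive $\z_\g(h)$), and then $\dim(\g_1\cap\z_\g(h))\ge\dim\te'_1\ge\rk\g-\rk\g_0$, since $\te'_0\subset\g_0$ is an abelian subspace of semisimple elements and so $\dim\te'_0\le\rk\g_0$. The main obstacle is the structural step identifying $\z_\g(\h_0)$ with an abelian Cartan of $\g$; the rest of the argument is mostly bookkeeping once that is in hand.
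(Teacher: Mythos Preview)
Your proof is correct and follows essentially the same overall plan as the paper's: the cases $t\in\bbk^\times$ and $t=0$ are handled identically (isomorphism via $\vp_s$, respectively a citation for $\ind(\g_0\ltimes\g_1^{\sf ab})=\rk\g$; the paper cites \cite[Cor.\,9.4]{p05} specifically), and for $t=\infty$ both arguments begin with the same stabiliser computation $\g_{(\infty)}^\xi=\g_0\oplus\g_1^{\xi_0}$.

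The only genuine difference is in how you finish the $t=\infty$ case. The paper simply invokes the ``well-known property of $\BZ_2$-gradings'' that $\g_0$ contains a regular semisimple element of $\g$; taking such a $\xi_0$ (regular in both $\g$ and $\g_0$) and counting $\dim[\g,\xi_0]$, $\dim[\g_0,\xi_0]$ gives the formula in one stroke. You instead \emph{prove} this structural fact: your argument that $\z_\g(\h_0)$ is abelian (hence a Cartan of $\g$) for a Cartan $\h_0\subset\g_0$ is exactly a proof that generic elements of $\h_0$ are regular in $\g$. Your lower-bound argument via $\sigma$-stable Cartans in $\z_\g(h)$ and the toral bound $\dim\te'_0\le\rk\g_0$ is more elaborate than the paper's implicit use of semi-continuity plus density of $\g$-regular semisimple elements in $\g_0$, but it is correct and fully self-contained. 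What you gain is independence from the black box; what the paper gains is brevity.
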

\begin{proof}  We know that $\crk\{\,\,,\,\}_{\sf LP}=\crk\{\,\,,\,\}_{1}=\rk\g$, if $\g$ is semisimple.

1) \ If $t\ne 0,\infty$, then 
the existence of $\varphi_s$ with $s^2=t$ implies that $\{\,\,,\,\}_{t}$ is isomorphic to 
$\{\,\,,\,\}_{1}$. For $t=0$, one obtains the Poisson bracket of the semi-direct 
product ($\BZ_2$-contraction) $\g_{(0)}=\g_0\ltimes\g_1^{\sf ab}$, and it is proved in \cite[Cor.\,9.4]{p05} that 
$\ind(\g_0\ltimes\g_1^{\sf ab})=\rk\g$.

2) \  By definition, $\crk\{\,\,,\,\}_{\infty}=\ind \g_{(\infty)}=\min_{\xi\in \g^*} \dim\g^\xi_{(\infty)}$.
Here  $\{\,\,,\,\}_{\infty}$ represents the degenerated Lie algebra structure on 
the vector space $\g$ such that $[x_0+x_1,y_0+y_1]_\infty=[x_1,y_1]\in \g_0$.  
One easily verifies that if $\xi=\xi_0+\xi_1\in \g^*$, then
 $\g^\xi_{(\infty)}=\g_0\oplus \g_1^{\xi_0}$. Therefore, 
\[
   \ind \g_{(\infty)}=\dim\g_0+\min_{\xi_0\in \g^*_0} \dim \g_1^{\xi_0}=
   \dim\g- \max_{\xi_0\in \g_0} \dim [\g_1, {\xi_0}] .
\]
In the last step, we use the fact that upon the identification of $\g_0^*$ and $\g_0$, the coadjoint action of
$\g_1\subset \g_{(\infty)}$ on $\g^*_0\subset \g^*_{(\infty)}$   becomes the usual bracket in $\g$.
\\ \indent
By a well-known property of $\BZ_2$-gradings, $\g_0$ always contains a regular semisimple element of
$\g$. If  $\xi_0\in\g_0$ is regular semisimple in $\g$ and hence in $\g_0$, then 
$[\g,\xi_0]=[\g_0,\xi_0]\oplus [\g_1,\xi_0]$, 
$\dim [\g,\xi_0]=\dim\g-\rk\g$, and $\dim[\g_0,\xi_0]=\dim\g_0-\rk\g_0$. 
Hence 
\[
\max_{\xi_0\in \g_0} \dim [\g_1, {\xi_0}]=\dim\g_1+\rk\g_0-\rk\g ,
\]
and we are done.
\end{proof}
It follows from Lemma~\ref{lm:rank-ot-t} that $t=\infty$ is regular in $\BP$ if and only if
$\dim\g_0=\rk\g_0$, i.e., $\g_0$ is Abelian. For the indecomposable pairs, this happens if and only if
$\g=\tri$. For this reason, it is necessary to handle the $\tri$-case separately.

\begin{ex}    \label{ex:sl2}
Let $\g=\tri$ with a standard basis $\{e,h,f\}$ such that $[h,e]=2e,\ [h,f]=-2f, \ [e,f]=h$. Then 
$\gS(\tri)^{\tri}=\bbk[h^2+4ef]$. For the unique (up to conjugation)  non-trivial $\sigma$, one has 
$\g_0=\bbk h$ and $e,f\in\g_1$. Then $\cz_t$ ($t\ne 0,\infty$) is generated by 
$h^2+t^{-1}ef$. An easy calculation shows that $\cz_0=\bbk[ef]$ and $\cz_\infty=\bbk[h]$. 
Here $\BP_{\sf reg}=\BP$, hence   $\gZ$ is generated by all
$\cz_t$ with $t\in \BP$ and  $\gZ=\bbk[h,ef]$. This is a maximal Poisson-commutative
subalgebra of $\gS(\g)$ and it  lies in $\gS(\g)^{\g_0}$.
\end{ex}

Unless otherwise explicitly stated, we assume below that $\g\ne\tri$.
We then obtain a $1$-parameter family of compatible Poisson brackets on $\g^*$, with generic central 
rank being equal to $\rk\g$ and  $\BP_{\sf sing}=\{\infty\}$, where the central rank jumps up to 
$\dim\g_0+\rk\g-\rk\g_0$. Hence $\BP_{\sf reg}=\BP\setminus \{\infty\}=\bbk$.  For each Lie algebra
$\g_{(t)}$, there is the related singular set $\g^*_{(t),\sf sing}=\g^*\setminus \g^*_{(t),\sf reg}$, 
cf.~Eq.~\eqref{eq:regul-set}. Then, clearly,
\[
        \g^*_{(t),\sf sing}=\{\xi\in\g^* \mid \rk \pi_t(\xi)< \rk \pi_t\} ,
\] 
which is the union of the symplectic $\g_{(t)}$-leaves in $\g^*$ having a non-maximal dimension. 
For aesthetic reasons, we write $\g^*_{\infty,\sf sing}$ instead of $\g^*_{(\infty),\sf sing}$. 

Let $\cz_t$ denote the centre of the Poisson algebra $(\gS(\g), \{\,\,,\,\}_{t})$. Then $\cz_1=\gS(\g)^{\g}$. 
For $\xi\in\g^*$, let $\textsl{d}_\xi F$ denote the differential of $F\in\gS(\g)$ at $\xi$. It is standard that for
any $H\in\gS(\g)^\g$, $\textsl{d}_\xi H\in \z(\g^\xi)$, where $\z(\g^\xi)$ is the centre of $\g^\xi$.
\\ \indent
Let $\{H_1,\dots,H_l\}$ be a set of homogeneous algebraically independent generators of $\gS(\g)^\g$.
By the {\it Kostant regularity criterion\/} for 
$\g$~\cite[Theorem~9]{ko63}, 
\beq     \label{eq:ko-re-cr}
\text{ $\left<\textsl{d}_\xi H_j \mid 1\le j\le l\right>_{\bbk}=\g^\xi$ \ if and only if \ $\xi\in\g^*_{\sf reg}$.} 
\eeq
(Recall that $\g^\xi=\z(\g^\xi)$ if and only if $\xi\in\g^*_{\sf reg}$~\cite[Thm.\,3.3]{p03}.)
Set $\textsl{d}_\xi \cz_t=\left<\textsl{d}_\xi F\mid F\in\cz_t\right>_{\bbk}$. 
Then $\textsl{d}_\xi \cz_t \subset \ker \pi_t(\xi)$ for each $t$. 
The regularity criterion obviously  holds for any $t\ne 0,\infty$. 
That is, 
\beq          \label{span-dif}
\text{if }\ t\ne0,\infty, \ \text{ then }\  \xi\not\in\g^*_{(t),\sf sing} \  
\Leftrightarrow \  
\textsl{d}_\xi \cz_t =\ker \pi_t(\xi) \Leftrightarrow \ \dim \ker \pi_t(\xi)=\rk\g . 
\eeq
A certain analogue of this statement holds for $t=0$, i.e., for $\g_{(0)}$ and $\textsl{d}_x\cz_0$, but 
only for involutions $\sigma$ such that $\gS(\g)^\g$ has a {\sf g.g.s.} for $(\g,\g_0)$, see~\cite{contr}.  

The centres $\cz_t$ ($t\in \bbk$) generate a Poisson-commutative subalgebra with respect to any 
bracket $\{\,\,,\,\}_t$, $t\in\BP$, cf. Corollary~\ref{cl-ort}. Write $\gZ=\mathsf{alg}\langle\cz_t\rangle_{t\in\bbk}$ for this subalgebra.
Note that $\textsl{d}_\xi\gZ$ is the linear span of $\textsl{d}_\xi \cz_t$ with $t\ne\infty$. 
There is a method for estimating the dimension of such subspaces, see Appendix~\ref{sect:app}.  

\begin{lm}             \label{lm-sum-reg}
Suppose that $\xi\in\g^*$ satisfy the  properties:
\begin{itemize}
\item[\bf (1)] $\dim\ker \pi_t(\xi)=\rk\g$ for all $t\ne \infty$; 
\item[\bf (2)] the rank of the skew-symmetric form $\pi_0(\xi)\vert_{\ker \pi_\infty(\xi)}$ equals
$\dim\ker \pi_\infty(\xi)-\rk\g$.
\end{itemize}
Then $\dim \textsl{d}_\xi\gZ=\rk\g+\frac{1}{2}\rk \pi_\infty(\xi)$
and $\dim \bigl(\textsl{d}_\xi\gZ\cap\ker \pi_\infty(\xi)\bigr)=\rk\g$.
\end{lm}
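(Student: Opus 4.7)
The statement is fundamentally a linear-algebra claim at the fixed covector $\xi$: it concerns the pencil of skew-symmetric forms $\pi_t(\xi)$ on $\g\simeq T^*_\xi\g^*$. My plan is first to identify $\textsl{d}_\xi\gZ$ with the subspace $L:=\sum_{t\in\bbk}\ker\pi_t(\xi)$, and then to extract the two dimensions from the general pencil lemma in the Appendix.

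For the identification, the inclusion $\textsl{d}_\xi\cz_t\subset\ker\pi_t(\xi)$ is automatic for every $t\in\bbk$, and gives $\textsl{d}_\xi\gZ\subset L$. For $t\in\bbk\setminus\{0\}$, hypothesis~\textbf{(1)} combined with \eqref{span-dif} upgrades this to the equality $\textsl{d}_\xi\cz_t=\ker\pi_t(\xi)$, a subspace of dimension $\rk\g$. A Vandermonde-type argument, using that the family $\{\ker\pi_t(\xi)\}_{t\in\bbk^\times}$ is of constant dimension and depends rationally on $t$, shows that $\sum_{t\in\bbk^\times}\ker\pi_t(\xi)$ already fills out $L$; in particular $\ker\pi_0(\xi)$ arises as a limit of kernels for $t\in\bbk^\times$ and hence lies in the sum. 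This yields the reverse inclusion $L\subset\textsl{d}_\xi\gZ$, and so $\textsl{d}_\xi\gZ=L$.

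To conclude, I appeal to the Appendix pencil lemma: for a pencil $\pi_t=\pi_0+t\pi_\infty$ of skew-symmetric forms on a finite-dimensional vector space, if $\dim\ker\pi_t=k$ for all $t\in\bbk^\times$ (hypothesis~\textbf{(1)}) and $\pi_0|_{\ker\pi_\infty}$ has the maximal possible rank $\dim\ker\pi_\infty-k$ (hypothesis~\textbf{(2)}), then $\dim L=k+\frac{1}{2}\rk\pi_\infty$, and $L\cap\ker\pi_\infty$ coincides with the common kernel $\bigcap_{t\in\bbk}\ker\pi_t$, which under~\textbf{(1)} has dimension exactly $k$. Specialising to $k=\rk\g$ gives the desired formulas.

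The main obstacle is the Appendix lemma itself: hypothesis~\textbf{(2)} is precisely what ensures that $L$ attains its maximal possible dimension in the pencil (so that no ``extra'' dimension is lost beyond the jump at $t=\infty$), and verifying this cleanly requires invoking the Kronecker--Jordan block classification of pencils of skew-symmetric forms. Granting the appendix lemma, the identification $\textsl{d}_\xi\gZ=L$ in the first step is entirely elementary.
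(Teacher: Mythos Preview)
Your approach is essentially the paper's: identify $\textsl{d}_\xi\gZ$ with $L=\sum_{t\in\bbk}\ker\pi_t(\xi)$ via \eqref{span-dif} and Lemma~\ref{open} (your ``Vandermonde-type argument''), then read off both dimensions from Theorem~\ref{sum-dim}.

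One correction to your paraphrase of the Appendix result: it is \emph{not} true that $L\cap\ker\pi_\infty(\xi)$ coincides with the common kernel $\bigcap_{t\in\bbk}\ker\pi_t(\xi)$, nor does hypothesis~\textbf{(1)} alone force that intersection to have dimension $\rk\g$. In the Jordan--Kronecker decomposition used in the proof of Theorem~\ref{sum-dim}, a Kronecker block of size $2k_i+1>1$ contributes a one-dimensional piece to $L\cap\ker\pi_\infty(\xi)$ but contributes nothing to $\bigcap_{t\in\bbk}\ker\pi_t(\xi)$, since the kernels $\ker(A_i+tB_i)$ vary with $t$. What Theorem~\ref{sum-dim} actually gives you directly is $\dim(L\cap U)=\dim V-m$, which here is $\rk\g$; you should simply quote that conclusion rather than route through the (incorrect) common-kernel identification.
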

\begin{proof}
By definition, $\textsl{d}_\xi\gZ \subset \sum_{t\ne \infty} \ker \pi_t(\xi)$. Then
Eq.~\eqref{span-dif} and hypothesis {\bf (1)} on $\xi$ imply that 
$\textsl{d}_\xi\gZ\supset \sum_{t\ne 0,\infty} \ker \pi_t(\xi)$. Observe that we have 
a $2$-dimensional vector space of skew-symmetric bilinear forms 
$a{\cdot}\pi_t(\xi)$ on $\g\simeq T^*_\xi \g^*$, where $a\in\bbk$, $t\in \BP$. Moreover, 
$\rk \pi_t(\xi)=\dim\g-\rk\g$ for each $t\ne \infty$. 
By Lemma~\ref{open}, we have $\sum_{t\ne 0,\infty} \ker \pi_t(\xi)=\sum_{t\ne \infty} \ker \pi_t(\xi)$. 
Now the desired equalities follow from Theorem~\ref{sum-dim}.  
\end{proof} 

It is not clear yet whether such elements $\xi\in\g^*$ actually exist! However, we will immediately see that there are plenty of them.
\begin{prop}          \label{lm-trdeg}
The hypotheses of Lemma~\ref{lm-sum-reg} hold for generic $\xi\in\g^*$ and therefore
\[
\trdeg \gZ=\frac{1}{2}\rk \pi_\infty+\rk\g=\frac{1}{2}(\dim\g -\crk\{\,\,,\,\}_\infty)+\rk\g .
\]
\end{prop}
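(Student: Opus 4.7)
The plan is to verify the two hypotheses of Lemma~\ref{lm-sum-reg} on a non-empty Zariski-open subset $U\subset \g^*$, apply the lemma on $U$ to compute $\dim\textsl{d}_\xi\gZ=\rk\g+\tfrac{1}{2}\rk\pi_\infty$, and conclude via the general identity $\trdeg\gZ=\max_{\xi\in\g^*}\dim\textsl{d}_\xi\gZ$, which holds because the rank of the Jacobian of any generating set of $\gZ$ is lower semi-continuous in $\xi$ and attains its maximum on a dense open subset.

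For hypothesis~(1), I would exploit Kostant's {\sl codim}--$3$ property. For $t\in\bbk^\times$, the isomorphism $\varphi_s\colon \g\to\g_{(t)}$ with $s^2=t$ identifies $\g^*_{(t),\sf sing}$ with $\varphi_s^{-1}(\g^*_{\sf sing})$, of codimension at least $3$ in $\g^*$; for $t=0$, $\g^*_{(0),\sf sing}$ is a proper closed subset, hence of codimension at least $1$. A fibrewise dimension count on the incidence variety
\[
\tilde S=\{(\xi,t)\in \g^*\times \bbk\mid \xi\in \g^*_{(t),\sf sing}\}
\]
then gives $\dim\tilde S\le \dim\g^*-1$, so the image of the projection $\tilde S\to \g^*$ is contained in a proper closed subset; its complement $U_1$ is the non-empty Zariski-open locus on which (1) holds.

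For hypothesis~(2), by the Appendix results on pencils of skew-symmetric forms applied to the pencil $\pi_t(\xi)$, once (1) is imposed one has the upper bound $\rk(\pi_0(\xi)|_{\ker\pi_\infty(\xi)})\le\dim\ker\pi_\infty(\xi)-\rk\g$, and (2) asserts the attainment of this bound, which is a Zariski-open condition by lower semi-continuity of rank. To exhibit one point where (2) holds, I would take $\xi=\xi_0+\xi_1\in \g^*$ with $\xi_0\in \g_0$ regular semisimple in $\g$ and $\xi_1\in \g_1$ generic. From the proof of Lemma~\ref{lm:rank-ot-t}, $\ker\pi_\infty(\xi)=\g_0\oplus\g_1^{\xi_0}$. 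The Cartan $\g^{\xi_0}$ is $\sigma$-stable, hence equals $\g_0^{\xi_0}\oplus\g_1^{\xi_0}$, and its abelianness gives $[\xi_1,\g_1^{\xi_0}]\subset[\xi_0,\g_0]$ by a Killing-form orthogonality argument. Using $\g_0\perp\g_1$, one identifies the radical of $\pi_0(\xi)|_{\ker\pi_\infty(\xi)}$ with
\[
\{v=v_0+v_1\in \g_0\oplus\g_1^{\xi_0}\mid [\xi,v]\in [\xi_0,\g_1]\}.
\]
The inclusion $[\xi_1,\g_1^{\xi_0}]\subset[\xi_0,\g_0]$ just established makes the solution space of the $\g_0$-component equation $[\xi_0,v_0]+[\xi_1,v_1]=0$ exactly $\rk\g$-dimensional; a Jacobi-identity check shows this solution space automatically satisfies the $\g_1$-component condition $[\xi_1,v_0]\in[\xi_0,\g_1]$, so the radical has dimension exactly $\rk\g$, verifying (2).

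Combining, on the non-empty open set $U=U_1\cap U_2\cap\{\xi\mid \rk\pi_\infty(\xi)=\rk\pi_\infty\}$, Lemma~\ref{lm-sum-reg} yields $\dim\textsl{d}_\xi\gZ=\rk\g+\tfrac{1}{2}\rk\pi_\infty$, and the transcendence-degree identity gives the first equality of the proposition; the second is just $\rk\pi_\infty=\dim\g-\crk\{\,\,,\,\}_\infty$. I expect the main technical obstacle to be the dimension count in Step~2: a naive transversality argument for the two equations defining the radical would predict a dimension drop of $\rk\g-\rk\g_0$ below $\rk\g$, but this drop is cancelled by a hidden Jacobi-identity redundancy coming from the abelianness of $\g^{\xi_0}$ and the $\BZ_2$-grading of $\g$.
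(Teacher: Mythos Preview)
Your overall strategy matches the paper's: verify both hypotheses of Lemma~\ref{lm-sum-reg} on a non-empty open set, then read off $\dim\textsl{d}_\xi\gZ$ and hence $\trdeg\gZ$. Your treatment of hypothesis~{\bf (1)} via an incidence variety is essentially the paper's argument, which parametrises $\bigcup_{t\ne 0,\infty}\g^*_{(t),\sf sing}$ explicitly as $\{\xi_0+t\xi_1\mid \xi_0+\xi_1\in\g^*_{\sf sing},\ t\ne 0,\infty\}$, observes its closure has codimension $\ge 2$, and handles $t=0$ separately.

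For hypothesis~{\bf (2)} your route diverges from the paper and contains one step that is asserted but not justified. You correctly describe the radical of $\pi_0(\xi)\vert_{\ker\pi_\infty(\xi)}$ at a point with $\xi_0$ regular semisimple as the solution set of the pair of conditions: (a)~$[\xi_0,v_0]+[\xi_1,v_1]=0$; (b)~$[\xi_1,v_0]\in[\xi_0,\g_1]$. Your Killing-form argument for $[\xi_1,\g_1^{\xi_0}]\subset[\xi_0,\g_0]$ is valid and yields that the solution set of~(a) alone has dimension exactly $\rk\g$. But your claim that ``a Jacobi-identity check'' forces~(b) to be automatic is not substantiated, and a direct verification is not apparent. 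Fortunately this step is redundant: you have already invoked Lemma~\ref{restr} to get radical dimension $\ge\rk\g$ (once {\bf (1)} holds at $\xi$), and since the radical is contained in the solution set of~(a), equality is forced. So your argument can be completed, just not via the route you sketch in your final paragraph.

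The paper's verification of~{\bf (2)} is cleaner and avoids the direct radical computation. It evaluates first at $\xi_1=0$: there $\pi_0(\xi_0)$ restricted to $\g_0\oplus\g_1^{\xi_0}$ is simply the Lie--Poisson form of $\g_0$ at $\xi_0$, extended by zero on $\g_1^{\xi_0}$, with radical $\g_0^{\xi_0}\oplus\g_1^{\xi_0}$ of dimension $\rk\g_0+(\rk\g-\rk\g_0)=\rk\g$. Lower semi-continuity in $\xi_1$ then gives the desired rank as a lower bound for generic $\xi_1$, and Lemma~\ref{restr} gives the matching upper bound. This specialisation-plus-semicontinuity trick replaces your analysis of condition~(b) entirely.
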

\begin{proof}
The first task is to prove that a generic point $\xi=\xi_0+\xi_1\in\g^*$ satisfies condition {\bf (1)} in Lemma~\ref{lm-sum-reg}. 

One can safely assume that $\xi$ is regular for $\{\,\,,\,\}_0$ and $\{\,\,,\,\}_\infty$. 
Next, we are lucky that  
$\xi_0+\xi_1\in \g^*_{\sf sing}=\g^*_{(1),\sf sing}$ if and only if 
$\xi_0+s^{-1}\xi_1\in\g^*_{(s^2),\sf sing}$. Therefore,
\beq      \label{eq-cdt}
   \bigcup_{t\ne 0,\infty} \g^*_{(t),\sf sing} = 
   \{\xi_0+t \xi_1 \mid \xi_0+\xi_1\in\g^*_{\sf sing}, \, t\ne 0,\infty\} .
\eeq
Since $\codim \g^*_{(t),\sf sing}=3$ for each $t\in\bbk^\times$, the closure of 
$\bigcup_{t\ne 0,\infty} \g^*_{(t),\sf sing}$ is a proper subset of $\g^*$.  Hence the condition 
$\dim\ker \pi_t(\xi)=\rk\g$ ($t\ne \infty$) holds for $\xi$ in a dense open subset. 

The next step is to check condition {\bf (2)}, i.e., compute the rank of the restriction of $\pi_0(\xi)$ to 
$\ker \pi_{\infty}(\xi)$. Write $\xi=\xi_0+\xi_1$, where $\xi_i\in\g_i^*$.    
We can safely assume that $\xi_0$ is regular in $\g$ and hence also in $\g_0$. 
\\ 
\textbullet \ \ For the inner involutions, one has $\rk\g=\rk\g_0$. Here $\ker \pi_{\infty}(\xi)=\g_0$ and the rank 
in question is $\dim\g_0-\rk\g$, as required in Lemma~\ref{lm-sum-reg}{\bf (2)}. 
\\  
\textbullet \ \
Suppose that $\sigma$ is outer. 
Then $\ker \pi_{\infty}(\xi)=\g_0 \oplus \g_1^{\xi_0}$ with  $\dim\g_1^{\xi_0}=\rk\g-\rk\g_0$. 
The rank of the form $\pi_0(\xi_0)$ on this kernel is equal to 
\[
   \dim\ker \pi_\infty(\xi_0)-\rk\g_0-\dim\g_1^{\xi_0}=\dim\ker \pi_\infty(\xi_0)-\rk\g. 
\] 
For a generic $\xi$, where $\xi_1$ is generic as well, the value in question cannot
be smaller than $\dim\ker \pi_\infty(\xi_0)-\rk\g$. On the other hand, it cannot   
be larger by Lemma~\ref{restr}.  That is, we have obtained the required value again!  

Now, it follows from Lemma~\ref{lm-sum-reg} that  
\[
\trdeg\gZ=\max_{\xi\in\g^*}\dim\textsl{d}_\xi\gZ
=\frac{1}{2}(\dim\g -\crk\{\,\,,\,\}_\infty)+\rk\g .  \qedhere
\]
\end{proof}

Combining Lemma~\ref{lm:rank-ot-t} and Proposition~\ref{lm-trdeg}, we obtain
\beq      \label{eq:formula}
   \trdeg \gZ=\frac{1}{2}(\dim\g_1+\rk\g+\rk\g_0) .
\eeq

\begin{lm}[{\cite[Prop.~1.1]{m-y}}]     \label{lm:kosik}
If $\ca\subset \gS(\g)^{\g_0}$ \ and \ $\{\ca,\ca\}_{\sf LP}=0$, then
\[      
      \trdeg\ca\le \bb(\g)-\bb(\g_0)+\ind\g_0.
\]
\end{lm}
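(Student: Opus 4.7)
The plan is to bound $\dim \textsl{d}_\xi\ca$ at a sufficiently generic $\xi\in\g^*$ and then use $\trdeg\ca=\max_\xi\dim \textsl{d}_\xi\ca$. I would identify $T^*_\xi\g^*\simeq\g$ via the Killing form, so that the Lie--Poisson tensor $\pi_{\sf LP}(\xi)$ becomes the skew form $(x,y)\mapsto\xi([x,y])$ on $\g$, with kernel $\g^\xi$. The $\g_0$-invariance of any $F\in\ca$ gives $0=\{x,F\}(\xi)=\xi([x,\textsl{d}_\xi F])$ for every $x\in\g_0$, so $\textsl{d}_\xi\ca$ lies in $V_\xi:=\{y\in\g\mid\xi([\g_0,y])=0\}$, which is the annihilator of $\ads(\g_0)\xi\subset\g^*$, of dimension $\dim\g_1+\dim(\g_0\cap\g^\xi)$. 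Since $\ca$ is Poisson-commutative, $\textsl{d}_\xi\ca$ is isotropic for the restriction $\pi_{\sf LP}(\xi)\vert_{V_\xi}$, and the standard bound on isotropic subspaces gives $\dim \textsl{d}_\xi\ca\le\tfrac12(\dim V_\xi+\dim K_\xi)$, where $K_\xi$ is the kernel of this restricted form.

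The main computational step --- and the only place I expect real care to be needed --- is identifying $K_\xi=V_\xi\cap V_\xi^{\perp_\pi}$. Since $V_\xi$ is the annihilator of $\ads(\g_0)\xi$, a direct check via the linear map $z\mapsto\ads(z)\xi$ from $\g$ to $\g^*$ yields $V_\xi^{\perp_\pi}=\g_0+\g^\xi$. Combined with the obvious inclusion $\g^\xi\subset V_\xi$ this gives $K_\xi=\g^\xi+(V_\xi\cap\g_0)$, and $V_\xi\cap\g_0$ coincides with the stabilizer $\g_0^{\xi_0}$ of $\xi_0:=\xi|_{\g_0}$ for the coadjoint action of $G_0$ on $\g_0^*$, because $[x,y]\in\g_0$ and $\xi([x,y])=\xi_0([x,y])$ for $x,y\in\g_0$. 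A short verification shows $\g^\xi\cap\g_0^{\xi_0}=\g_0\cap\g^\xi$, so that $\dim K_\xi=\dim\g^\xi+\dim\g_0^{\xi_0}-\dim(\g_0\cap\g^\xi)$; the contributions of $\dim(\g_0\cap\g^\xi)$ in $\dim V_\xi$ and in $-\dim K_\xi$ then cancel, leaving $\dim \textsl{d}_\xi\ca\le\tfrac12(\dim\g_1+\dim\g^\xi+\dim\g_0^{\xi_0})$.

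To conclude, I would pick $\xi$ in a dense open subset of $\g^*$ satisfying simultaneously $\dim \textsl{d}_\xi\ca=\trdeg\ca$, $\dim\g^\xi=\rk\g$ (regularity of $\xi$ in $\g^*$), and $\dim\g_0^{\xi_0}=\rk\g_0$ (regularity of $\xi_0$ in $\g_0^*$); the last condition holds on a dense open set because the projection $\g^*\to\g_0^*$ is surjective and pulls back the regular locus in $\g_0^*$ to a dense open set. Substituting yields $\trdeg\ca\le\tfrac12(\dim\g_1+\rk\g+\rk\g_0)$, which is exactly $\bb(\g)-\bb(\g_0)+\ind\g_0$ after the routine rewriting using $\bb(\q)=(\dim\q+\ind\q)/2$ and $\ind\g_0=\rk\g_0$.
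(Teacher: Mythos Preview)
The paper does not supply its own proof of this lemma; it is quoted as \cite[Prop.~1.1]{m-y}. Your argument is correct and is essentially the standard one: restrict the Lie--Poisson form $\pi_{\sf LP}(\xi)$ to the subspace $V_\xi=\{y\in\g\mid\xi([\g_0,y])=0\}$ containing $\textsl{d}_\xi\ca$, compute the kernel of the restricted form as $\g^\xi+\g_0^{\xi_0}$, apply the isotropy bound, and specialise to a generic $\xi$ so that $\dim\g^\xi=\rk\g$ and $\dim\g_0^{\xi_0}=\rk\g_0$. The identification $V_\xi^{\perp_\pi}=\g_0+\g^\xi$ via the coadjoint map $z\mapsto\ads(z)\xi$ and the use of the modular law (which is licit because $\g^\xi\subset V_\xi$) to obtain $K_\xi=\g_0^{\xi_0}+\g^\xi$ are both correctly handled, and the cancellation of $\dim(\g_0\cap\g^\xi)$ is exactly as you describe. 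The final numerical rewriting matches the sentence immediately following the lemma in the paper.
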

Note that in our situation, $\bb(\g)-\bb(\g_0)+\ind\g_0=\frac{1}{2}(\dim\g_1+\rk\g+\rk\g_0)$.

\begin{lm}    \label{lm:vnutri-lezhit}
We have $\gZ\subset \gS(\g)^{\g_0}$.
\end{lm}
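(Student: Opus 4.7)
The plan is to reduce the claim to showing that each generator $F\in\cz_t$ (with $t\in\bbk$) is annihilated by every $x\in\g_0$ under the Lie--Poisson bracket, and to observe that for such $x$ the bracket $\{x,\,\cdot\,\}_t$ coincides with $\{x,\,\cdot\,\}_{\sf LP}$ regardless of $t$.

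More precisely, the key calculation is pointwise on degree-one elements: for $x\in\g_0$ (so $x_1=0$) and any $y=y_0+y_1\in\g$,
\[
\{x,y\}_{0,0}=[x,y_0],\quad \{x,y\}_{0,1}=[x,y_1],\quad \{x,y\}_{1,1}=[x_1,y_1]=0.
\]
Hence $\{x,y\}_t=[x,y_0]+[x,y_1]=[x,y]=\{x,y\}_{\sf LP}$ for every $t\in\bbk$. By the Leibniz rule (applied to both brackets, which are derivations in each argument), this identity extends from generators to all of $\gS(\g)$. Therefore, for every $x\in\g_0$ and every $F\in\gS(\g)$,
\[
\{x,F\}_t \;=\; \{x,F\}_{\sf LP}.
\]

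The conclusion is immediate: if $F\in\cz_t$ for some $t\in\bbk$, then $\{y,F\}_t=0$ for all $y\in\g$; specialising to $y=x\in\g_0$ and using the identity above gives $\{x,F\}_{\sf LP}=0$. Thus $\cz_t\subset \gS(\g)^{\g_0}$ for every $t\in\bbk$, and since $\gZ$ is the subalgebra generated by these centres, $\gZ\subset \gS(\g)^{\g_0}$.

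There is essentially no obstacle here; the argument rests entirely on the elementary observation that the $\g_0$-action on $\gS(\g)$ induced by $\{\,,\,\}_t$ is independent of $t\in\bbk$, because the deformation parameter is attached only to the component $\{\,,\,\}_{1,1}$, on which $\g_0$ acts trivially. (The case $t=\infty$ is excluded from the definition of $\gZ$ for $\g\neq\tri$, and the $\tri$-case is already verified directly in Example~\ref{ex:sl2}.)
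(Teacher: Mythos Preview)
Your proof is correct and follows essentially the same approach as the paper: both observe that for $x\in\g_0$ the bracket $\{x,\cdot\}_t$ coincides with $\{x,\cdot\}_{\sf LP}$ for every $t\ne\infty$, whence each $\cz_t$ with $t\in\bbk$ lies in $\gS(\g)^{\g_0}$. You simply spell out in more detail (via the explicit computation on degree-one elements and the Leibniz rule) what the paper compresses into the single remark that ``the commutators $[x_0,y]$ are the same as in $\g$''.
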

\begin{proof}
For all Poisson brackets $\{\,\,,\,\}_t$ with $t\ne\infty$,
the commutators $[x_0,y]$ are the same as in $\g$. Hence $\cz_t\subset \gS(\g)^{\g_0}$ for each
$t\ne \infty$.
\end{proof}

{\sl A posteriori}, this lemma is true for $\g=\tri$ as well, cf. Example~\ref{ex:sl2}.
Combining previous formulae, together with computations for $\tri$, we obtain the next general assertion.

\begin{thm}    \label{prop:main2}
For any $\g$ and any $\sigma$, the algebra 
$\gZ=\mathsf{alg}\langle\cz_t\rangle_{t\in\BP_{\sf reg}}$ is a Poisson-commutative subalgebra of\/ 
$\gS(\g)^{\g_0}$ of the maximal possible transcendence degree, which is given by 
Eq.~\eqref{eq:formula}. 
\end{thm}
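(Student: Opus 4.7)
The plan is to assemble the pieces already laid out in the preceding section, since the essential analytic work has been done in Proposition~\ref{lm-trdeg}. The theorem asserts three things: that $\gZ$ is Poisson-commutative for $\{\,\,,\,\}_{\sf LP}$, that $\gZ \subset \gS(\g)^{\g_0}$, and that $\trdeg \gZ$ equals the magic number $\frac{1}{2}(\dim\g_1+\rk\g+\rk\g_0)$, which is maximal among Poisson-commutative subalgebras of $\gS(\g)^{\g_0}$ by Lemma~\ref{lm:kosik}. I would treat $\g=\tri$ separately at the start, since Example~\ref{ex:sl2} already exhibits $\gZ=\bbk[h,ef]$ explicitly and the claimed formula gives $\trdeg \gZ = \tfrac12(2+1+1)=2$, matching $\bb(\tri)-\bb(\bbk h)+\ind(\bbk h)=2-1+1=2$.

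For $\g\ne\tri$, the Poisson-commutativity of $\gZ$ is a general feature of the pencil construction: any two elements of $\cz_t$ and $\cz_{t'}$ with $t,t'\in\bbk$ are annihilated by $\{\,\,,\,\}_{\sf LP}=\{\,\,,\,\}_0+\{\,\,,\,\}_\infty$, because writing $\{\,\,,\,\}_{\sf LP}=\frac{1}{t-t'}\bigl((t-1)\{\,\,,\,\}_{t'}-(t'-1)\{\,\,,\,\}_t\bigr)$ (for $t\ne t'$) expresses the Lie--Poisson bracket as a linear combination of two brackets that respectively kill the second and the first argument. This is precisely the statement of Corollary~\ref{cl-ort} referenced in the text. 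Containment in $\g_0$-invariants is already recorded as Lemma~\ref{lm:vnutri-lezhit}: since $\{\,\,,\,\}_t$ for $t\ne\infty$ restricted to $\g_0\times\g$ coincides with the bracket in $\g$, any centre $\cz_t$ with $t\ne\infty$ lies in $\gS(\g)^{\g_0}$.

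The numerical part is where the work concentrates, and it is already completed: Proposition~\ref{lm-trdeg} yields $\trdeg\gZ=\tfrac12\rk\pi_\infty+\rk\g$, and inserting the value $\crk\{\,\,,\,\}_\infty=\dim\g_0+\rk\g-\rk\g_0$ from Lemma~\ref{lm:rank-ot-t} produces Eq.~\eqref{eq:formula}. Maximality of this transcendence degree then follows from Lemma~\ref{lm:kosik}, noting the numerical identity
\[
\bb(\g)-\bb(\g_0)+\ind\g_0=\tfrac12\bigl(\dim\g+\rk\g-\dim\g_0-\rk\g_0+2\rk\g_0\bigr)=\tfrac12(\dim\g_1+\rk\g+\rk\g_0).
\]

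The main obstacle is not in the assembly step but in Proposition~\ref{lm-trdeg} itself, which has already been proved. Specifically, the delicate point was verifying that a generic $\xi\in\g^*$ satisfies both hypotheses of Lemma~\ref{lm-sum-reg}: condition \textbf{(1)} used the \emph{codim}--$3$ property of $\g$ together with the scaling~\eqref{eq-cdt} to push the union of singular sets into a proper subvariety, while condition \textbf{(2)} required splitting into inner and outer involutions and, in the outer case, invoking Lemma~\ref{restr} of the Appendix to bound the rank of $\pi_0(\xi)\vert_{\ker\pi_\infty(\xi)}$ from above by the value it generically attains. With those technical facts in hand, the theorem is a short synthesis.
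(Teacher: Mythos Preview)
Your proposal is correct and follows exactly the assembly the paper performs: the paper's own ``proof'' is the single sentence \emph{``Combining previous formulae, together with computations for $\tri$, we obtain the next general assertion''}, and you have simply made that combination explicit, invoking Example~\ref{ex:sl2}, Corollary~\ref{cl-ort}, Lemma~\ref{lm:vnutri-lezhit}, Proposition~\ref{lm-trdeg} with Lemma~\ref{lm:rank-ot-t}, and Lemma~\ref{lm:kosik} in turn. Your linear-combination identity $\{\,\,,\,\}_{\sf LP}=\frac{1}{t-t'}\bigl((t-1)\{\,\,,\,\}_{t'}-(t'-1)\{\,\,,\,\}_t\bigr)$ is a correct unpacking of Corollary~\ref{cl-ort} in this setting.
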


In Section~\ref{sect:3}, we provide an explicit set of generators of $\gZ$, if $\gS(\g)^\g$ has a good 
generating system for $(\g,\g_0)$. From this, we deduce that $\gZ$ is a polynomial algebra. Although 
$\gZ$ has the maximal transcendence degree among the Poisson-commutative subalgebras of 
$\gS(\g)^{\g_0}$, it is not always maximal.  In Section~\ref{sect:4}, we construct the extended algebra 
$\tilde\gZ$ such that $\gZ\subset\tilde\gZ\subset \gS(\g)^{\g_0}$ and show that $\tilde\gZ$ is maximal 
and still polynomial.

\section{The algebra $\gZ$ is polynomial whenever $\sigma$ is good}
\label{sect:3}
\noindent
Let $\{H_1,\dots,H_l\}$, $l=\rk\g$,  be a set of homogeneous algebraically independent generators of 
$\gS(\g)^\g$.
Set $d_i=\deg H_i$. Then $\sum_{i=1}^l d_i=\bb(\g)$. Associated with the vector space decomposition
$\g=\g_0\oplus\g_1$, one has the bi-homogeneous decomposition of each $H_j$:
\[
    H_j=\sum_{i=0}^{d_j} (H_j)_{(i,d_j-i)} ,
\] 
where $(H_j)_{(i,d_j-i)}\in \gS^i(\g_0)\otimes \gS^{d_j-i}(\g_1)\subset \gS^{d_j}(\g)$. Let $H_j^\bullet$ be the nonzero bi-homogeneous component of $H_j$ with
maximal $\g_1$-degree. Then $\deg_{\g_1}\! H_j=\deg_{\g_1}\! H_j^\bullet$ and we set 
$d_j^\bullet=\deg_{\g_1}\! H_j^\bullet$.

\begin{df}   
Let us say that $H_1,\dots,H_l$ is a {\it good generating system} in $\gS(\g)^\g$
({\it {\sf g.g.s.}\/} for short) for $(\g,\g_0)$ or for $\sigma$, if $H_1^\bullet,\dots,H_l^\bullet$ are 
algebraically independent. 
\end{df}
\noindent
If the pair  $(\g,\g_0)$ is indecomposable, which we always tacitly assume, then there is no {\sf g.g.s.} 
for four involutions related to $\g$ of type $\eus{E}_{n}$~\cite[Remark\,4.3]{coadj} and a {\sf g.g.s.}
exists in all other cases, see \cite{contr}. The importance of {\sf g.g.s.} is clearly seen in the following
fundamental result.

\begin{thm}[{\cite[Theorem\,3.8]{contr}}]    \label{thm:kot14}  
Let $H_1,\dots,H_l$ be an arbitrary set of homogeneous algebraically independent generators of\/ $\gS(\g)^\g$. Then
\begin{itemize}
\item[\sf (i)] \ $\sum_{j=1}^l \deg_{\g_1}\!\! H_j\ge \dim\g_1$;
\item[\sf (ii)] \  $H_1,\dots,H_l$ is a {\sf g.g.s.} if and only if\/ $\sum_{j=1}^l \deg_{\g_1}\!\! H_j=\dim\g_1$; 
\item[\sf (iii)] \ if $H_1,\dots,H_l$ is a {\sf g.g.s.}, then
$\gS(\g_{(0)})^{\g_{(0)}}=\bbk[H_1^\bullet,\dots,H_l^\bullet]$ is a polynomial algebra.
\end{itemize}
\end{thm}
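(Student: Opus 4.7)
The strategy is to work in the contraction $\g_{(0)} = \g_0 \ltimes \g_1^{\sf ab}$ and exploit the compatible family of Poisson brackets $\{\,,\,\}_t$ from Section~\ref{sect:2}. My first step is to show that each $H_j^\bullet$ lies in $\gS(\g_{(0)})^{\g_{(0)}} = \cz_0$. Since $\cz_{s^2} = \vp_{1/s}(\gS(\g)^\g)$ for $s \in \bbk^\times$, the element $\vp_{1/s}(H_j)$ is $\{\,,\,\}_{s^2}$-central, and its bi-homogeneous expansion reads
\[
\vp_{1/s}(H_j) = \sum_{i=0}^{d_j} s^{i-d_j}(H_j)_{(i, d_j-i)} = s^{-d_j^\bullet}\bigl(H_j^\bullet + s\cdot(\text{terms of lower }\g_1\text{-degree})\bigr).
\]
Rescaling by $s^{d_j^\bullet}$ and letting $s \to 0$ produces $H_j^\bullet$ as the limit of $\{\,,\,\}_{s^2}$-central elements; by the flatness of the family $\cz_t$ over $\BP_{\sf reg} = \bbk$, this limit lies in $\cz_0$, so $H_j^\bullet \in \gS(\g_{(0)})^{\g_{(0)}}$.

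For (i) and (ii) I would compare $\g_1$-degrees with the coadjoint geometry of $\g_{(0)}$. By Lemma~\ref{lm:rank-ot-t}, $\ind \g_{(0)} = \rk\g = l$, hence $\trdeg\gS(\g_{(0)})^{\g_{(0)}} = l$ and at most $l$ of the $H_j^\bullet$ can be algebraically independent. To establish $\sum_j d_j^\bullet \ge \dim\g_1$, I would work at a generic $\xi = \xi_0 + \xi_1 \in \g_{(0)}^*$ with $\xi_0$ regular semisimple in both $\g$ and $\g_0$, for which the stabilizer $\g_{(0)}^\xi$ admits the explicit description used in the proof of Lemma~\ref{lm:rank-ot-t}. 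A Ra\"{\i}s-type dimension count for the semidirect product $\g_0 \ltimes \g_1^{\sf ab}$ then isolates precisely $\dim\g_1$ ``new directions'' that generators of $\cz_0$ must collectively span in aggregate $\g_1$-degree; equivalently, one appeals to the general sum-of-degrees inequality $\sum_j \deg_V F_j \ge \dim V$ available for algebras of the form $\h \ltimes V^{\sf ab}$ possessing the requisite codim-$2$ regularity. Equality in this inequality is precisely the condition that the top-$\g_1$-degree components are algebraically independent, i.e., the g.g.s.\ condition, which gives (ii).

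For (iii), under the g.g.s.\ assumption the polynomial subalgebra $\bbk[H_1^\bullet,\ldots,H_l^\bullet] \subseteq \gS(\g_{(0)})^{\g_{(0)}}$ attains the maximal transcendence degree $l$ and saturates the degree-sum bound from step (ii). A Jacobian/Kostant-style regularity argument then shows that the differentials $\textsl{d}_\xi H_j^\bullet$ span $\z(\g_{(0)}^\xi)$ on a dense open subset, forcing the inclusion to be an equality by a standard freeness criterion. The main obstacle lies in step (i): while the total-degree analogue $\sum d_j = \bb(\g)$ is elementary, the $\g_1$-refined bound is delicate because it hinges on the precise Kostant--Rallis structure of $\g_1$ as a $\g_0$-module (Cartan subspace, little Weyl group), and in fact fails to be tight precisely for the four exceptional $\eus E_n$-involutions, where no g.g.s.\ exists and $\gS(\g_{(0)})^{\g_{(0)}}$ is itself not polynomial.
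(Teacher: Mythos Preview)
This theorem is not proved in the paper; it is quoted verbatim from \cite[Theorem~3.8]{contr}, so there is no in-paper proof to compare against. I will therefore assess your sketch on its own terms and against the argument in \cite{contr}.

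Your opening step---showing $H_j^\bullet\in\cz_0$ by rescaling $\vp_{1/s}(H_j)$ and passing to the limit $s\to 0$---is correct and is exactly how one sees that the top $\g_1$-components land in the invariants of the contraction. (You do not need ``flatness'': since $s^{d_j^\bullet}\vp_{1/s}(H_j)$ and $\{\,,\,\}_{s^2}$ are both polynomial in $s$, the vanishing of the bracket at $s\ne 0$ persists at $s=0$.)

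The remainder, however, is a sequence of gestures rather than arguments. For {\sf (i)}, ``a Ra\"{\i}s-type dimension count \dots\ isolates precisely $\dim\g_1$ new directions'' is not a proof. In \cite{contr} the inequality is obtained from the Kostant identity
\[
\frac{\textsl{d}H_1\wedge\cdots\wedge\textsl{d}H_l}{\omega}=\bigwedge\nolimits^{(n-l)/2}\pi,
\]
by comparing the bi-degrees of both sides under $\pi=\pi_{0,0}+\pi_{0,1}+\pi_{1,1}$: the right-hand side has a nonzero component whose $\gS(\g_1)$-degree is exactly $\dim\g_1$, while the left-hand side contributes at most $\sum_j d_j^\bullet$. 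Your sketch does not touch this mechanism, and I do not see how to extract the bound from stabiliser dimensions at a single generic point.

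For {\sf (iii)}, matching transcendence degree and degree sum does \emph{not} force $\bbk[H_1^\bullet,\dots,H_l^\bullet]=\gS(\g_{(0)})^{\g_{(0)}}$. The argument in \cite{contr} uses two further ingredients you omit: first, that $\g_{(0)}$ has the \textsl{codim}--$2$ property (this is a separate, nontrivial theorem, cf.\ \cite[Theorem~3.3]{coadj}); second, a criterion of the type in \cite{trio} asserting that if $F_1,\dots,F_l\in\gS(\q)^\q$ are homogeneous, algebraically independent, satisfy $\sum\deg F_i=\bb(\q)$, and $\q$ has the \textsl{codim}--$2$ property, then they generate $\gS(\q)^\q$. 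Your ``standard freeness criterion'' presumably means this, but the \textsl{codim}--$2$ input is the crux and cannot be waved away---indeed, your own final sentence acknowledges that $\gS(\g_{(0)})^{\g_{(0)}}$ fails to be polynomial in four cases, which is precisely where this machinery breaks down.
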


Recall that $\g_{(0)}=\g_0\ltimes\g_1^{\sf ab}$ is a $\BZ_2$-contraction of $\g$ and  $\ind\g_{(0)}=\ind\g$.
We continue to assume that $\g\ne\tri$, hence $\BP_{\sf reg}=\bbk$ and 
$\gZ=\mathsf{alg}\langle\cz_t\rangle_{t\in \bbk}$.
\begin{thm}    \label{thm:main3-1}
Suppose that $\{H_i\}$ is a {\sf g.g.s.} 
for $\sigma$. Then the algebra $\gZ$ is generated by
\beq   \label{eq:bihom}
   \{(H_j)_{(i,d_j-i)} \mid  j=1,\dots,l \ \& \ i=0,1,\dots,d_j\}, 
\eeq
i.e., by all bi-homogeneous components of $H_1,\dots,H_l$.
\end{thm}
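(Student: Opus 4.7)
\smallskip

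\textbf{Proof plan.} Let $\mathcal{B}$ denote the subalgebra of $\gS(\g)$ generated by the set \eqref{eq:bihom}; the goal is to show $\gZ=\mathcal{B}$. The plan is to exploit the Poisson isomorphism $\vp_s\colon (\gS(\g),\{\,,\,\}_{\sf LP})\isom(\gS(\g),\{\,,\,\}_{s^2})$ for $s\in\bbk^\times$, invoke Theorem~\ref{thm:kot14}\textup{(iii)} to handle $\cz_0$, and then extract the individual bi-homogeneous components by an elementary Vandermonde argument.

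First I would observe that $\vp_s^{-1}$ multiplies any bi-homogeneous element of bidegree $(i,j)$ by $s^{-j}$. Since $\cz_{s^2}=\vp_s^{-1}(\gS(\g)^\g)$, rescaling by $s^{d_j}\in\bbk^\times$ shows that $\cz_{s^2}$ is generated, as an algebra, by the polynomials
\[
F_j(s):=\sum_{i=0}^{d_j} s^i(H_j)_{(i,d_j-i)}, \qquad j=1,\dots,l.
\]
Each $F_j(s)$ is already a $\bbk$-linear combination of the generators listed in \eqref{eq:bihom}, hence $\cz_t\subset\mathcal{B}$ for every $t\in\bbk^\times$. For $t=0$, the {\sf g.g.s.} hypothesis allows me to apply Theorem~\ref{thm:kot14}\textup{(iii)}, giving $\cz_0=\bbk[H_1^\bullet,\dots,H_l^\bullet]$; since each $H_j^\bullet$ is one of the components $(H_j)_{(i,d_j-i)}$, this yields $\cz_0\subset\mathcal{B}$ as well, and therefore $\gZ\subset\mathcal{B}$.

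For the reverse inclusion, I fix $j$ and pick $d_j+1$ pairwise distinct scalars $s_0,\dots,s_{d_j}\in\bbk^\times$. The matrix $\bigl(s_k^i\bigr)_{0\le k,i\le d_j}$ is a Vandermonde matrix with nonzero determinant, so inverting it expresses each $(H_j)_{(i,d_j-i)}$ as a $\bbk$-linear combination of $F_j(s_0),\dots,F_j(s_{d_j})$. Since every $F_j(s_k)$ lies in $\cz_{s_k^2}\subset\gZ$ and $\gZ$ is a $\bbk$-subspace of $\gS(\g)$, each bi-homogeneous component of $H_j$ sits in $\gZ$, which gives $\mathcal{B}\subset\gZ$.

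The only real obstacle is the $t=0$ case in the first inclusion: without the {\sf g.g.s.} assumption one loses control of $\cz_0$, which could then contribute elements outside $\mathcal{B}$; this is precisely why the four ``bad'' involutions of type $\eus{E}_n$ must be excluded from the statement. Once Theorem~\ref{thm:kot14}\textup{(iii)} is invoked, the rest of the argument reduces to the Vandermonde manipulation above, which is valid because $\bbk$ is infinite.
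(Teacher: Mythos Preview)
Your proof is correct and follows essentially the same approach as the paper: both use the isomorphism $\vp_s^{-1}$ to identify generators of $\cz_t$ for $t\ne 0,\infty$, invoke Theorem~\ref{thm:kot14}{\sf (iii)} to handle $\cz_0$, and apply a Vandermonde argument to extract the individual bi-homogeneous components. Your write-up is slightly more explicit in separating the two inclusions $\gZ\subset\mathcal{B}$ and $\mathcal{B}\subset\gZ$, but the substance is identical.
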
 
\begin{proof}
To begin with,  $\cz(\{\,\,,\,\}_1)=\cz(\gS(\g))=\bbk[H_1,\dots,H_l]$. By the definition of 
$\{\,\,,\,\}_t$, we have $\cz(\{\,\,,\,\}_t)=\varphi_{s}^{-1} (\cz(\gS(\g)))$ for $t\ne 0,\infty$, where $s^2=t$ and 
\[
    \varphi_s(H_j)=(H_j)_{(d_j,0)}+s (H_j)_{(d_j-1,1)}+ s^2 (H_j)_{(d_j-2,2)}+\dots
\]
Using the Vandermonde determinant, we deduce from this that all $(H_j)_{(i,d_j-i)}$ belong to $\gZ$ and the algebra generated by them contains $\cz_t$ with $t\in \bbk\setminus\{0\}$.
Moreover, the specific bi-homogeneous components $H_1^\bullet,\dots,H_l^\bullet$ generate $\cz_0$, since $H_1,\dots,H_l$ is a {\sf g.g.s.}
Therefore, the polynomials \eqref{eq:bihom} generate the whole of $\gZ$.
\end{proof}

However, not every $i\in \{0,1,\dots, d_j\}$ provides a nonzero bi-homogeneous 
component of $H_j$. Let us make this precise. 
Since the case of inner involutions is technically easier, we consider it first.

\begin{thm}   \label{thm:main3-2}
Suppose that $\sigma\in {\sf Aut}(\g)$ is inner, and let $H_1,\dots,H_l$ be a {\sf g.g.s.} in $\gS(\g)^\g$ with 
$d_j^\bullet=\deg_{\g_1}\!\! H_j$. Then
\begin{itemize}
\item[\sf (i)]  all $d^\bullet_j$, $j=1,\dots,l$, are even; 
\item[\sf (ii)]  $(H_j)_{(i,d_j-i)}\ne 0$ if and only if $d_j{-}i$ is even and $0\le d_j{-}i\le d_j^\bullet$;
\item[\sf (iii)]  the polynomials $\{(H_j)_{(i,d_j-i)} \mid j=1,\dots,l; \ \& \ d_j{-}i=0,2,\dots, d^\bullet_j \}$
freely generate $\gZ$.
\end{itemize}
\end{thm}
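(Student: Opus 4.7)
My plan is to first exploit the $\sigma$-invariance of the generators $H_j$ to rule out all bi-homogeneous components of odd $\g_1$-degree, and then to close the argument by a transcendence-degree count. Since $\sigma$ is inner, it lies in the connected adjoint group of $\g$ and therefore acts trivially on $\gS(\g)^\g$, so $\sigma(H_j)=H_j$ for each $j$. On the other hand, $\sigma$ acts on the component $(H_j)_{(i,d_j-i)}\in \gS^i(\g_0)\otimes \gS^{d_j-i}(\g_1)$ by the scalar $(-1)^{d_j-i}$. Hence $(H_j)_{(i,d_j-i)}=0$ whenever $d_j-i$ is odd. Applied to $H_j^\bullet\ne 0$ this forces $d_j^\bullet$ to be even, proving \textsf{(i)}, and it simultaneously establishes the forward direction of \textsf{(ii)}: the upper bound $d_j-i\le d_j^\bullet$ is just the definition of $d_j^\bullet$, and $d_j-i\ge 0$ is trivial.

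For the remaining (nonvanishing) direction of \textsf{(ii)} and for \textsf{(iii)}, I would run a counting argument. Let $S=\{(H_j)_{(i,d_j-i)}\mid 1\le j\le l,\ d_j-i\in\{0,2,\ldots,d_j^\bullet\}\}$ denote the set of candidate polynomials allowed by \textsf{(ii)}. Then
\[
|S|=\sum_{j=1}^l\!\Bigl(\tfrac{d_j^\bullet}{2}+1\Bigr)=l+\tfrac{1}{2}\sum_{j=1}^l d_j^\bullet=\rk\g+\tfrac{1}{2}\dim\g_1,
\]
where the last equality uses Theorem~\ref{thm:kot14}\textsf{(ii)} applied to the {\sf g.g.s.} $\{H_j\}$. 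Since $\sigma$ is inner, $\rk\g_0=\rk\g$, and Eq.~\eqref{eq:formula} yields $\trdeg\gZ=\tfrac{1}{2}(\dim\g_1+2\rk\g)=|S|$. By Theorem~\ref{thm:main3-1}, the nonzero elements of $S$ generate $\gZ$ as a $\bbk$-algebra; but an integral domain of transcendence degree $|S|$ cannot be generated by fewer than $|S|$ elements. Consequently every element of $S$ is nonzero --- this completes \textsf{(ii)} --- and these $|S|$ generators must be algebraically independent, hence freely generate the polynomial algebra $\gZ$, proving \textsf{(iii)}.

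The main --- indeed only --- obstacle is the bookkeeping one of checking that the two a priori distinct counts, $|S|$ computed from a {\sf g.g.s.} via Theorem~\ref{thm:kot14}\textsf{(ii)} and $\trdeg\gZ$ computed via \eqref{eq:formula}, agree on the nose. This numerical coincidence relies on the equality $\rk\g_0=\rk\g$, which is characteristic of inner involutions; for outer $\sigma$ one loses this matching and a more delicate argument will be required. The logical substance of the proof is concentrated entirely in the $\sigma$-invariance observation, which cleanly splits the bi-homogeneous decomposition into ``forced zero'' and ``forced nonzero'' pieces once the transcendence degrees are compared.
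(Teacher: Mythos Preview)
Your proof is correct and follows essentially the same approach as the paper: the $\sigma$-invariance argument to kill odd-$\g_1$-degree components, followed by the count $|S|=\rk\g+\tfrac{1}{2}\dim\g_1$ via Theorem~\ref{thm:kot14}{\sf(ii)}, matched against $\trdeg\gZ$ using $\rk\g_0=\rk\g$. The paper's proof is slightly more terse but the logic is identical.
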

\begin{proof}
{\sf (1)} Since $\sigma$ is inner, $\sigma(H_j)=H_j$ for all $j$. On the other hand, 
$\sigma\vert_{\g_0}={\sf id}$, $\sigma\vert_{\g_1}=-{\sf id}$, and hence 
$\sigma((H_j)_{(i,d_j-i)})=(-1)^{d_j-i}(H_j)_{(i,d_j-i)}$. This yields {\sf (i)} and one implication in {\sf (ii)}.

{\sf (2)} In view of part {\sf (1)}, the number of non-zero bi-homogeneous components of $H_j$ is at
most $(d_j^\bullet/2) +1$. Hence the total number of nonzero bi-homogeneous components of all $H_j$ 
is at most $\sum_{j=1}^l (d_j^\bullet/2) +1=(\dim\g_1/2)+\rk\g$.

As $\sigma$ is inner, one also has $\rk\!\g=\rk\!\g_0$. Therefore, $\trdeg\gZ=(\dim\g_1/2)+\rk\g$, see Eq.~\eqref{eq:formula}. Because the bi-homogeneous components of all  $H_j$ generate $\gZ$ (Theorem~\ref{thm:main3-1}),
we see that all $(H_j)_{(i,d_j-i)}$ with $d_j-i=0,2,\dots, d_j^\bullet$ are nonzero and algebraically independent. Thus, they freely generate $\gZ$.
\end{proof}

With extra technical details, Theorem~\ref{thm:main3-2} extends to the outer involutions as well.
Let $\sigma$ be an arbitrary involution of $\g$. It is easily seen that a set of homogeneous generators
of $\gS(\g)^\g$ can be chosen so that each $H_j$ is an eigenvector of $\sigma$, i.e.,
$\sigma(H_j)=\esi_j H_j=\pm H_j$. Moreover, the set of pairs $\{(d_j,\esi_j)\mid j=1,\dots,l\}$ does not 
depend on the set of generators, cf.~\cite[Lemma\,6.1]{springer}. However, we need a set of free generators 
that both is a {\sf g.g.s}{.} and consists of $\sigma$-eigenvectors.

\begin{lm}     \label{kosiks-lemma}
If there is a {\sf g.g.s.} for $(\g,\g_0)$, then there is also a {\sf g.g.s.} that consists of eigenvectors of  $\sigma$. 
\end{lm}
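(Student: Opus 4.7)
The plan is to replace each given generator with its appropriate $\sigma$-isotypic component. Let $H_1,\dots,H_l$ be the hypothesised {\sf g.g.s.} Since $\sigma\in\mathrm{Aut}(\g)$ extends to an involution of $\gS(\g)$ preserving $\gS(\g)^\g$, each generator admits the decomposition
\[
H_j=H_j^{+}+H_j^{-},\qquad H_j^{\pm}:=\tfrac{1}{2}\bigl(H_j\pm\sigma(H_j)\bigr)\in \gS(\g)^\g,
\]
with $H_j^{\pm}$ homogeneous of degree $d_j$ and $\sigma(H_j^{\pm})=\pm H_j^{\pm}$. Because $\sigma|_{\g_0}=\id$ and $\sigma|_{\g_1}=-\id$, the involution acts on the bi-homogeneous piece $\gS^i(\g_0)\otimes\gS^{d_j-i}(\g_1)$ by $(-1)^{d_j-i}$, so the top $\g_1$-component $H_j^\bullet=(H_j)_{(d_j-d_j^\bullet,\,d_j^\bullet)}$ lies entirely in $H_j^{\esi_j}$, where $\esi_j=(-1)^{d_j^\bullet}$. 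I would set $\tilde H_j:=H_j^{\esi_j}$, so that $\tilde H_j$ is an eigenvector of $\sigma$ and $\tilde H_j^\bullet = H_j^\bullet$.

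It then suffices to show that $\tilde H_1,\dots,\tilde H_l$ freely generate $\gS(\g)^\g$. For algebraic independence, I would pass to leading $\g_1$-degree components: writing $\tilde H_j = H_j^\bullet + R_j$ with $\deg_{\g_1}(R_j) < d_j^\bullet$, a hypothetical nontrivial relation $P(\tilde H_1,\dots,\tilde H_l)=0$ would, after extracting the terms of maximal $\g_1$-weight $\sum_j\alpha_j d_j^\bullet$, produce a nontrivial polynomial relation in $H_1^\bullet,\dots,H_l^\bullet$ -- contradicting the {\sf g.g.s.} hypothesis via Theorem~\ref{thm:kot14}. For generation, since $\deg\tilde H_j=d_j$ and the $\tilde H_j$'s are algebraically independent, the Poincar\'e series of the polynomial subring $\bbk[\tilde H_1,\dots,\tilde H_l]\subset\gS(\g)^\g$ equals $\prod_j(1-t^{d_j})^{-1}$, which coincides with that of $\gS(\g)^\g$. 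Graded equality in each degree then forces $\bbk[\tilde H_1,\dots,\tilde H_l]=\gS(\g)^\g$, and $\{\tilde H_j\}$ is the desired {\sf g.g.s.} consisting of $\sigma$-eigenvectors.

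The main obstacle I anticipate is the algebraic independence verification, as it relies on the $\g_1$-degree being multiplicative on products -- the leading $\g_1$-component of $\tilde H^\alpha$ must equal $\prod_j(H_j^\bullet)^{\alpha_j}$ without cancellation. This is automatic because $\gS(\g)\simeq\gS(\g_0)\otimes\gS(\g_1)$ is an integral domain graded by $\g_1$-degree, but it is the only step where the algebra structure (rather than just the linear $\sigma$-action on $\gS(\g)^\g$) plays an essential role.
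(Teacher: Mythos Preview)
Your proof is correct and takes a genuinely different route from the paper's. The paper proceeds inductively: if some $H_i$ is not a $\sigma$-eigenvector, it considers the enlarged set $H_1,\dots,H_{i-1},H_i^{(+)},H_i^{(-)},H_{i+1},\dots,H_l$ and argues via the quotient $\gA_+/\gA_+^2$ that dropping one of $H_i^{(\pm)}$ still yields a generating system; it then invokes the lower bound $\sum_j\deg_{\g_1}H_j\ge\dim\g_1$ from Theorem~\ref{thm:kot14}{\sf(i)} to rule out the ``wrong'' sign (the one that would lower $\deg_{\g_1}$), and iterates. Your argument instead replaces all generators at once by the correct isotypic component $\tilde H_j=H_j^{\esi_j}$, verifies algebraic independence by passing to the associated graded for the $\g_1$-degree filtration (where the {\sf g.g.s.} hypothesis applies directly), and obtains generation from the Poincar\'e series identity for $\gS(\g)^\g$. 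Your approach is more streamlined and uses only the \emph{definition} of a {\sf g.g.s.} plus Chevalley's theorem; the paper's argument trades the Poincar\'e series step for the quotient $\gA_+/\gA_+^2$ and the inequality of Theorem~\ref{thm:kot14}{\sf(i)}, making it slightly more hands-on but also illustrating why that inequality is sharp for a {\sf g.g.s.} Both proofs yield the same extra information, namely $\tilde H_j^\bullet=H_j^\bullet$.
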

\begin{proof}
Let $H_1,\dots,H_l$ be a {\sf g.g.s.}, hence 
$\sum_{j=1}^l \deg_{\g_1}H_j=\dim\g_1$ in view of Theorem~\ref{thm:kot14}. 

Let $\gA_+$ be the ideal in $\gS(\g)^\g$ generated by all homogeneous invariants of positive degree.
Then $\boldsymbol{\gA}:=\gA_+/\gA_+^2$ is a finite-dimensional $\bbk$-vector space. If $H\in \gA_+$, 
then $\bar H:=H+\gA_+^2\in \boldsymbol{\gA}$.
As is well-known, $F_1,\dots,F_m$ is a generating system for $\gS(\g)^\g$ if and only if 
the $\bbk$-linear span of $\bar F_1,\dots, \bar F_m$ is the whole of $\boldsymbol{\gA}$.
In our situation, $\dim_\bbk \boldsymbol{\gA}=l$ and $\boldsymbol{\gA}=\langle
\bar H_1,\dots, \bar H_l\rangle$.

If $H_i$ is not a $\sigma$-eigenvector, i.e., $\sigma(H_i)\ne\pm H_i$, then we consider the generating set
\[
   H_1,\dots,H_{i-1}, \frac{H_i+\sigma(H_i)}{2}, \frac{H_i- \sigma(H_i)}{2}, H_{i+1},\dots, H_l 
\] 
for  $\gS(\g)^\g$ that includes $l+1$ polynomials. Since 
$\bar H_,\dots,\bar H_{i-1}, \bar H_{i+1},\dots, \bar H_l$ are linearly independent in $\boldsymbol{\gA}$,
we obtain a better generating set by replacing $H_i$ with one of the functions
$H_i^{(+)}=\frac{H_i+\sigma(H_i)}{2}$ or $H_i^{(-)}=\frac{H_i-\sigma(H_i)}{2}$. Let us demonstrate that there is actually only one suitable replacement for $H_i$, and this yields again a {\sf g.g.s.} 
Recall that $d_j^\bullet=\deg_{\g_1}\! H_j^\bullet=\deg_{\g_1}\! H_j$. 
 
(a)  Suppose that $d_j^\bullet$ is even. Then $\sigma(H_i^\bullet)=H_i^\bullet$ and $H_i^\bullet$ cancel out in 
$H_i^{(-)}$. Therefore, $\deg_{\g_1}H_i^{(-)}< \deg_{\g_1}H_i$ and the sum of $\g_1$-degrees for $H_1,\dots,H_{i-1},H_i^{(-)}, H_{i+1},\dots, H_l$ is less than $\dim\g_1$. By Theorem~\ref{thm:kot14},
this means that the choice of 
$H_i^{(-)}$ in place of $H_i$ does not provide a generating system, and the only right choice is to take
$H_i^{(+)}$. Moreover, $H_i^\bullet=(H_i^{(+)})^\bullet$ and hence
$H_1,\dots,H_{i-1}, H_i^{(+)},  H_{i+1},\dots, H_l$ is a {\sf g.g.s.}
 
(b)  If $d_j^\bullet$ is odd, then we end up with the {\sf g.g.s.}
$H_1,\dots,H_{i-1}, H_i^{(-)},  H_{i+1},\dots, H_l$.

The procedure reduces the number of generators that are not $\sigma$-eigenvectors, and we eventually
obtain a {\sf g.g.s.} that consists of $\sigma$-eigenvectors. 
\end{proof}

Without loss of generality, we can assume that $H_1,\dots,H_l$ is a {\sf g.g.s.} and 
$\sigma(H_j)=\pm H_j$.

\begin{lm}    \label{lm:outer-inv}
For any involution $\sigma\in {\sf Aut}(\g)$, we have

{\bf (1)} \ $\sigma(H_j)=H_j$ if and only if $d^\bullet_j$ is even;

{\bf (2)} \ $\rk\g_0=\# \{j \mid \sigma(H_j)=H_j\}$.
\end{lm}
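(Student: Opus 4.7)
\medskip
\noindent\textbf{Proof plan.}
Part {\bf (1)} will follow directly from the bi-homogeneous decomposition. Since $\sigma\vert_{\g_0}=\id$ and $\sigma\vert_{\g_1}=-\id$, the automorphism $\sigma$ acts on $(H_j)_{(i,d_j-i)}$ by the scalar $(-1)^{d_j-i}$. As $H_j$ is assumed to be a $\sigma$-eigenvector (this is the whole point of Lemma~\ref{kosiks-lemma}), all of its nonzero bi-homogeneous components share the same parity of $d_j-i$; in particular the nonzero component $H_j^\bullet$, of maximal $\g_1$-degree $d_j^\bullet$, pins down the sign, yielding $\sigma(H_j)=(-1)^{d_j^\bullet}H_j$.

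For {\bf (2)}, the plan is to read off the $\sigma$-parity of each $H_j$ from its differential at a carefully chosen point of $\g_0$, via the Kostant regularity criterion~\eqref{eq:ko-re-cr}. First I would pick $x_0\in\g_0$ that is regular semisimple in $\g$ \emph{and} regular in $\g_0$; such points form a dense open subset of $\g_0$, as already recalled in the proof of Lemma~\ref{lm:rank-ot-t}. Setting $\te:=\g^{x_0}$ gives a $\sigma$-stable Cartan subalgebra of $\g$, and the decomposition into $\sigma$-eigenspaces reads $\te=\te_0\oplus\te_1$ with $\te_0=\te\cap\g_0=\g_0^{x_0}$. By regularity of $x_0$ in $\g_0$, $\te_0$ is a Cartan subalgebra of $\g_0$, so $\dim\te_0=\rk\g_0$ and $\dim\te_1=\rk\g-\rk\g_0$.

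Next I would differentiate the identity $H_j\circ\sigma=\esi_j H_j$ at the fixed point $x_0$ to obtain $\sigma(\textsl{d}_{x_0}H_j)=\esi_j\textsl{d}_{x_0}H_j$. Hence $\textsl{d}_{x_0}H_j\in\te_0$ whenever $\esi_j=+1$ and $\textsl{d}_{x_0}H_j\in\te_1$ whenever $\esi_j=-1$. The Kostant regularity criterion says that $\{\textsl{d}_{x_0}H_j\}_{j=1}^{l}$ is a basis of $\g^{x_0}=\te$; splitting this basis by $\sigma$-eigenvalue forces the $(+1)$-eigenvectors to span $\te_0$ and the $(-1)$-eigenvectors to span $\te_1$, giving at least $\rk\g_0$ indices with $\esi_j=+1$ and at least $\rk\g-\rk\g_0$ with $\esi_j=-1$. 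Since these two counts sum to $l=\rk\g$, both inequalities are equalities, and combined with {\bf (1)} this yields {\bf (2)}.

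The only mild obstacle is the existence of a regular-in-both-$\g$-and-$\g_0$ element $x_0\in\g_0$, but this is precisely the input already used in Lemma~\ref{lm:rank-ot-t}, so no extra work is required; the rest of the argument unfolds linearly from a single differentiation and a dimension count.
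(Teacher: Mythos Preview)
Your argument is correct. Part {\bf (1)} is exactly the paper's argument (the paper simply says it is ``similar to Theorem~\ref{thm:main3-2}(i)'').

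For part {\bf (2)} you take a genuinely different route from the paper. The paper invokes Springer's theorem on regular elements of finite reflection groups~\cite[Corollary\,6.5]{springer}, applied to the Weyl group of a $\sigma$-stable Cartan $\te=\te_0\oplus\te_1$ with $\te_0$ a Cartan subalgebra of $\g_0$. You use the \emph{same} Cartan subalgebra but replace the citation by a direct computation: differentiating $H_j\circ\sigma=\esi_j H_j$ at a regular point $x_0\in\te_0$ and invoking the Kostant regularity criterion~\eqref{eq:ko-re-cr}, which is already available in the paper. This yields a $\sigma$-eigenbasis $\{\textsl{d}_{x_0}H_j\}$ of $\te$, and the dimension count $\dim\te_0=\rk\g_0$, $\dim\te_1=\rk\g-\rk\g_0$ forces the desired split. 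Your approach is more self-contained and arguably more in the spirit of the rest of the paper, which repeatedly exploits~\eqref{eq:ko-re-cr}; the paper's citation to Springer buys brevity and places the fact in a broader context (the multiset $\{(d_j,\esi_j)\}$ is an invariant of the pair $(W,\sigma)$, independent of the choice of generators), but for the bare counting statement your argument suffices.
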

\begin{proof}
{\sf (1)} \ The proof is similar to that of Theorem~\ref{thm:main3-2}(i).
\\ 
{\sf (2)} \ This follows from results of T.\,Springer on regular elements of finite reflection 
groups~\cite[Corollary\,6.5]{springer}. To this end, one has to consider the Weyl group corresponding
to a Cartan subalgebra
$\te=\te_0\oplus \te_1\subset\g_0\oplus\g_1$ such that $\te_0$ is a Cartan in $\g_0$.
\end{proof}

Now, we can state and prove the main result of this section.
\begin{thm}\label{free-main}
Let $\sigma$ be an involution of $\g$ such that $\gS(\g)^\g$ has a {\sf g.g.s.}
Then $\gZ$ is a polynomial algebra that is freely generated by the bi-homogeneous components
of all $\{H_j\}$. More precisely, if $\sigma(H_j)=H_j$, then $d^\bullet_j$ is even and
the nonzero bi-homogeneous components of $H_j$ are $(H_j)_{(i,d_j-i)}$ with 
$d_j-i=0,2,\dots, d^\bullet_j$; 
if $\sigma(H_j)=-H_j$, then $d^\bullet_j$ is odd and
the nonzero bi-homogeneous components of $H_j$ are $(H_j)_{(i,d_j-i)}$ with 
$d_j-i=1,3,\dots, d^\bullet_j$.
\end{thm}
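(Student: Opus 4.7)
The plan is to extend the argument of Theorem~\ref{thm:main3-2} from the inner to the general case by combining the parity/eigenvector bookkeeping from Lemmas~\ref{kosiks-lemma} and~\ref{lm:outer-inv} with the transcendence-degree formula~\eqref{eq:formula}. By Lemma~\ref{kosiks-lemma}, I may choose the {\sf g.g.s.} $H_1,\dots,H_l$ so that $\sigma(H_j)=\esi_j H_j$ with $\esi_j\in\{\pm1\}$ for each $j$. Since $\sigma\vert_{\g_0}=\id$ and $\sigma\vert_{\g_1}=-\id$, the component $(H_j)_{(i,d_j-i)}$ is a $\sigma$-eigenvector with eigenvalue $(-1)^{d_j-i}$; comparing with $\sigma(H_j)=\esi_j H_j$, this forces $(H_j)_{(i,d_j-i)}=0$ unless $d_j-i$ has the parity prescribed by $\esi_j$. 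By Lemma~\ref{lm:outer-inv}{\sf (1)}, $\esi_j=+1$ iff $d_j^\bullet$ is even, which gives the parity constraints stated in the theorem.

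Next, I count the \emph{maximal possible} number of nonzero bi-homogeneous components subject to these parity constraints. For $\esi_j=+1$, the admissible values $d_j-i\in\{0,2,\dots,d_j^\bullet\}$ yield $d_j^\bullet/2+1$ components, while for $\esi_j=-1$, the values $d_j-i\in\{1,3,\dots,d_j^\bullet\}$ yield $(d_j^\bullet+1)/2$ components. Summing and using both $\sum_{j=1}^l d_j^\bullet=\dim\g_1$ (Theorem~\ref{thm:kot14}{\sf (ii)}) and $\#\{j\mid \esi_j=+1\}=\rk\g_0$ (Lemma~\ref{lm:outer-inv}{\sf (2)}), the total count equals
\[
\tfrac{1}{2}\sum_j d_j^\bullet + \rk\g_0 + \tfrac{1}{2}(l-\rk\g_0)
= \tfrac{1}{2}\bigl(\dim\g_1+\rk\g+\rk\g_0\bigr),
\]
which by Eq.~\eqref{eq:formula} is precisely $\trdeg\gZ$.

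To conclude, I invoke Theorem~\ref{thm:main3-1}, which asserts that the bi-homogeneous components of $H_1,\dots,H_l$ generate $\gZ$. Since the number of \emph{nonzero} such components is bounded above by the count just computed, and this bound equals $\trdeg\gZ$, every admissible component must be nonzero, and the full collection must be algebraically independent. Hence $\gZ$ is the polynomial ring freely generated by those components, with the stated parity description.

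The main obstacle is the bookkeeping in the parity/eigenvalue count: one has to verify carefully that Lemma~\ref{lm:outer-inv} pins down both the parity of each $d_j^\bullet$ and the number of $+1$-eigenvectors among the $H_j$, so that the summation telescopes exactly to $\trdeg\gZ$. Once the count matches, algebraic independence is automatic, because a generating set whose cardinality equals the transcendence degree in a polynomial (more generally, integral domain) context is necessarily a system of free generators.
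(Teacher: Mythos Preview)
Your proof is correct and follows essentially the same route as the paper's: both use the $\sigma$-eigenvector parity constraint to bound the number of nonzero bi-homogeneous components, evaluate the bound via $\sum_j d_j^\bullet=\dim\g_1$ and $\#\{j\mid\esi_j=+1\}=\rk\g_0$, match it to $\trdeg\gZ$ from~\eqref{eq:formula}, and then conclude by Theorem~\ref{thm:main3-1} that all admissible components are nonzero and algebraically independent.
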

\begin{proof}
By Lemma~\ref{lm:outer-inv}, we may order the basic invariants $\{H_j\}$ such that
\[
   d_j^\bullet \ \text{ is } \begin{cases} \text{ even } & i\le k:=\rk\g_0 ; \\  \text{ odd } & i\ge k+1 .
   \end{cases} 
\]
Clearly, if $d_j^\bullet$ is even, then $\esi_j=1$ and  $H_j$ has at most $(d_j^\bullet/2)+1$ nonzero 
bi-homogeneous components, while if $d_j^\bullet$ is odd, then $\esi_j=-1$ and $H_j$ has at most 
$(d_j^\bullet+1)/2$ nonzero bi-homogeneous components.
Hence the total number of all nonzero bi-homogeneous components is at most
\[
  \sum_{j=1}^k \left(\frac{d_j^\bullet}{2}+1\right)  +  \sum_{j=k+1}^l \frac{d_j^\bullet+1}{2}=
  \sum_{j=1}^l \frac{d_j^\bullet}{2}+k +\frac{l-k}{2}=\frac{\dim\g_1+\rk\g+\rk\g_0}{2}=\trdeg \gZ .
\]
Therefore, all admissible bi-homogeneous components must be  nonzero and algebraically independent.
\end{proof}

\begin{rmk}    \label{rem:Z-for-bad-sigma}
If there is no g.g.s{.} for $(\g,\g_0)$, then $\sum_j \deg_{\g_1}H_j >\dim\g_1$ for any set of basic invariants.
Hence the number of the bi-homogeneous components of $\{H_j\}$ is bigger than $\trdeg\gZ$ and
these generators of $\gZ$ are algebraically dependent. Moreover, the algebra 
$\cz_0=\cz(\gS(\g_0\ltimes\g_1^{\sf ab}))$, which is contained in $\gZ$, is not 
polynomial~\cite[Section\,6]{Y-imrn}, 
and also $H_1^\bullet,\dots,H_l^\bullet$ are algebraically dependent, cf. Theorem~\ref{thm:kot14}.
Thus, we cannot say anything good about $\gZ$ in the four ``bad'' cases.
\end{rmk}

\begin{rmk}      \label{rem:sigma-inner}
Recall from the introduction the map $r_0: \gS(\g)^\g \to  \gS(\g_0)^{\g_0}$.
If $\sigma$ is inner, then $\g_0$ contains a Cartan subalgebra of $\g$ and $r_0$ is injective. 
Hence $(H_j)_{(d_j,0)}=r_0(H_j)\ne 0$ for all $j$,
which also  follows from Theorem~\ref{thm:main3-2}. Clearly, 
$r_0(\gS(\g)^\g)\subset \gZ$ for any $\sigma$. More precisely,  $r_0(\gS(\g)^\g)$ is freely generated by
the $r_0(H_j)=(H_j)_{(d_j,0)}$ such that $\sigma(H_j)=H_j$ (i.e., $d_j^\bullet$ is even).
However, for the inner (and some outer) involutions, $r_0(\gS(\g)^\g)$ is a proper
subalgebra of $\gS(\g_0)^{\g_0}$. And this is the reason, why $\gZ$ appears to be not always a maximal 
commutative subalgebra of $\gS(\g)^{\g_0}$.
\end{rmk}

\section{The extended algebra $\tilde\gZ$ is polynomial and maximal Poisson-commutative}
\label{sect:4}

\noindent
In this section, we assume that $\g\ne\gt{sl}_2$, 
$(\g,\g_0)$ is indecomposable, and there is a {\sf g.g.s.} for $(\g,\g_0)$. We write $\z(\q)$ for the centre
of a Lie algebra $\q$.
An open subset of $\g^*$ is said to be {\it big}, if its complement does not contain divisors.
 
There is an extraordinary powerful tool for proving maximality of certain subalgebras.

\begin{thm}[{\cite[Theorem~1.1]{trio}}]       \label{ppy-max}  
Let $F_1,\ldots,F_r\in\gS(\g)$ be homogeneous algebraically independent polynomials 
such that their differentials $\{\textsl{d}F_i\}$ are linearly independent 
on a big open subset of $\g^*$. Then  $\bbk[F_1,\ldots,F_r]$ is an 
algebraically closed subalgebra of $\gS(\g)$, i.e., if $H\in \gS(\g)$ is 
algebraic over the field\/ $\bbk(F_1,\ldots,F_r)$, then
$H\in \bbk[F_1,\ldots,F_r]$. 
\end{thm}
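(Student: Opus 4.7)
Write $L:=\bbk(F_1,\ldots,F_r)$, $K:=\bbk(\g^*)$, and let $F=(F_1,\ldots,F_r)\colon\g^*\to\bbk^r$. Since the $F_i$ are algebraically independent, $F$ is dominant and $K/L$ has transcendence degree $\dim\g-r$. My plan is to split the claim into two parts: (A) $L$ is algebraically closed in $K$, and (B) $L\cap\gS(\g)=\bbk[F_1,\ldots,F_r]$. Any $H\in\gS(\g)$ algebraic over $L$ then lies in $L$ by (A) and in $\bbk[F_1,\ldots,F_r]$ by (B), giving the theorem.

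For (B), suppose $H=P(F)/Q(F)$ with $P,Q\in\bbk[F_1,\ldots,F_r]$ coprime and $Q$ non-constant. Choose an irreducible factor $q\mid Q$ in $\bbk[F_1,\ldots,F_r]$; by coprimality there exists $y\in\{q=0\}\subset\bbk^r$ with $P(y)\neq 0$. Dominance of $F$ makes $F^{-1}(y)$ non-empty, and any $x\in F^{-1}(y)$ satisfies $Q(F)(x)=Q(y)=0\neq P(y)=P(F)(x)$, so $H$ has a pole at $x$, contradicting $H\in\gS(\g)$. Hence $Q$ is a unit.

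Part (A) is the heart of the matter; it is equivalent to the generic fiber of $F$ being geometrically integral. I would use the Stein factorization $F=\pi\circ\tilde F$, where $\tilde F\colon\g^*\to X$ has geometrically connected fibers and $\pi\colon X\to\bbk^r$ is finite with $X$ normal and irreducible. It then suffices to show $\pi$ is an isomorphism. Any non-trivial finite cover of $\bbk^r$ must be ramified, since $\bbk^r$ is algebraically simply connected in characteristic zero; by Zariski--Nagata purity the branch locus $B\subset\bbk^r$ would be pure of codimension one. Then $F^{-1}(B)$ is a divisor in $\g^*$; however, at any $x\in F^{-1}(B)$ the scheme-theoretic fibre of $F$ through $x$ acquires nilpotents, because it is pulled back along the ramified $\pi$, while smoothness of $F$ on the big open set $U$ forces reduced fibres over all of $U$. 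Hence $F^{-1}(B)\subseteq\g^*\setminus U$, contradicting the fact that $F^{-1}(B)$ is a divisor while $\g^*\setminus U$ has codimension $\ge 2$. Thus $\pi$ is étale everywhere, hence trivial, hence an isomorphism, and (A) follows.

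The main obstacle is the local claim in (A) that ramification of $\pi$ at $\tilde F(x)$ propagates to non-reducedness of the corresponding fibre of $F$ at $x$; this requires flatness of $\tilde F$ at such $x$, which demands a short separate verification once the Stein factorization is set up. An alternative route, bypassing purity of the branch locus, is to work directly with $\bbk[F]'$, the integral closure of $\bbk[F]$ in the algebraic closure of $L$ inside $K$ (finite over $\bbk[F]$ by Nagata): any non-trivial element of $\bbk[F]'\setminus\bbk[F]$ would introduce new codimension-one branching in the composition $\g^*\to\operatorname{Spec}\bbk[F]'\to\bbk^r$, which again the big-open-subset hypothesis forbids.
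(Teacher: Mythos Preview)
First, note that the paper does not prove this theorem; it is quoted from \cite{trio} and used as a black box, so there is no proof in the present paper to compare against. I will therefore assess your argument on its own terms.

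The two-step strategy (A)${}+{}$(B) is reasonable, but both steps have genuine gaps.

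In (B), the sentence ``Dominance of $F$ makes $F^{-1}(y)$ non-empty'' is false in general: a dominant morphism of affine varieties need not be surjective, and the hypersurface $\{q=0\}$ could a priori lie outside the image. In fact (B) cannot follow from dominance alone: for $F_1=x^2$, $F_2=xy$ on $\bbk^2$ one has $y^2=F_2^2/F_1\in L\cap\gS(\g)$ yet $y^2\notin\bbk[F_1,F_2]$. Here the differentials are dependent along the divisor $\{x=0\}$, so the big-open-set hypothesis is violated---which shows that (B) really needs that hypothesis, not just algebraic independence.

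In (A), Stein factorisation in the usual sense requires a proper morphism, and $F\colon\g^*\to\bbk^r$ is not proper. Your alternative route via the integral closure $\bbk[F]'$ is on firmer ground, since normality of $\gS(\g)$ gives $\bbk[F]'\subset\gS(\g)$ and hence a genuine factorisation $\g^*\to\mathrm{Spec}\,\bbk[F]'\to\bbk^r$. But the key step---that ramification of the finite map forces $dF_1,\ldots,dF_r$ to become dependent along a \emph{divisor} in $\g^*$---is not carried out. You would need to check that the ramification locus in $\mathrm{Spec}\,\bbk[F]'$ pulls back to a divisor upstairs (the map $\g^*\to\mathrm{Spec}\,\bbk[F]'$ is dominant but not finite, so this needs an argument) and that the Jacobian of $F$ drops rank there.

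A more direct line, which treats (A) and (B) simultaneously and avoids both purity and Stein factorisation, runs as follows. Take $H\in\gS(\g)$ homogeneous and algebraic over $\bbk(F)$; then $dH\wedge dF_1\wedge\cdots\wedge dF_r=0$ identically. On the big open set $U$ this gives $dH=\sum_i c_i\,dF_i$ with $c_i$ regular on $U$; by normality of $\gS(\g)$ and $\mathrm{codim}(\g^*\setminus U)\ge 2$, each $c_i$ extends to a homogeneous polynomial of degree $\deg H-\deg F_i<\deg H$. Differentiating the minimal polynomial of $H$ shows $c_i\in\bbk(F,H)$, hence each $c_i$ is again algebraic over $\bbk(F)$. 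Induction on degree yields $c_i\in\bbk[F]$, and a polynomial Poincar\'e-lemma argument (the $1$-form $\sum c_i\,dy_i$ on $\bbk^r$ is closed, hence exact) gives $H\in\bbk[F]$.
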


In order to apply this theorem to $\gZ$ and $\tilde\gZ$, we need some properties of divisors in $\g^*$.

\begin{lm}\label{d-easy}
Let $D\subset \g^*$ be an irreducible divisor. Then there is a non-empty open subset 
$U\subset D$ such that, for each $\xi\in U$, we have 
\begin{itemize}
\item[{\sf (i)}] $\xi\not\in \g^*_{(t),\sf sing}$, if $t\ne \infty$;
\item[{\sf (ii)}] if $\xi=\xi_0+\xi_1$ with $\xi_i\in\g_i^*$, then $\xi_0\in(\g_0^*)_{\sf reg}$.
\end{itemize}
\end{lm}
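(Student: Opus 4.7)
My plan is to realise the complement of the desired $U$ in $D$ as the intersection of $D$ with a closed subset $B\subset\g^*$ of codimension at least $2$. Since $D$ is irreducible of codimension $1$, no such $B$ can contain $D$, whence $U:=D\setminus B$ is automatically a non-empty Zariski-open subset of $D$ and the lemma follows.

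For part (i), I would control the failure locus $\bigcup_{t\in\bbk}\g^*_{(t),\sf sing}$ in two pieces. For each $t\in\bbk^\times$, the map $\varphi_s$ with $s^2=t$ is a Lie algebra isomorphism $\g\isom\g_{(t)}$, so Kostant's codim-$3$ theorem~\cite{ko63} yields $\codim_{\g^*}\g^*_{(t),\sf sing}=3$. The parametrisation~\eqref{eq-cdt} exhibits
\[
\bigcup_{t\in\bbk^\times}\g^*_{(t),\sf sing}
\]
as the image of the morphism $\mu\colon \g^*_{\sf sing}\times\bbk^\times\to\g^*$ sending $(\xi_0+\xi_1,t)$ to $\xi_0+t\xi_1$; its source has dimension $\dim\g-2$, so the closure of the image has codimension at least $2$ in $\g^*$. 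For the remaining value $t=0$, the standing {\sf g.g.s.}\ hypothesis and Theorem~\ref{thm:kot14}(iii) yield polynomial generators $H_1^\bullet,\dots,H_l^\bullet$ of $\gS(\g_{(0)})^{\g_{(0)}}$ with $\sum_j \deg H_j^\bullet=\bb(\g_{(0)})$. Standard Kostant-type arguments (cf.~\cite{contr}) then imply $\codim_{\g^*}\g^*_{(0),\sf sing}\geq 2$, so the entire union has codimension $\geq 2$.

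For part (ii), the reductivity of $\g_0$ and Kostant's codim-$3$ theorem applied to $\g_0$ give $\codim_{\g_0^*}\g^*_{0,\sf sing}\geq 3$. Pulling back along the projection $\g^*\to\g_0^*$ yields a closed subset of $\g^*$ of codimension $\geq 3$. Taking $B$ to be the union of the closed subsets constructed for parts (i) and (ii) completes the plan.

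The main obstacle I anticipate is controlling $\g^*_{(0),\sf sing}$: showing that the degeneration of the bracket at $t=0$ does not enlarge the singular locus past codimension $2$. This is the only place where the standing assumption of a good generating system for $(\g,\g_0)$ is genuinely needed---without the polynomiality of $\gS(\g_{(0)})^{\g_{(0)}}$ and the matching of degrees to $\bb(\g_{(0)})$, one cannot \emph{a priori} rule out that $\g^*_{(0),\sf sing}$ is itself an irreducible divisor that happens to contain $D$.
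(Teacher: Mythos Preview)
Your overall strategy---bound the bad locus by a closed subset of codimension $\ge 2$ and intersect with the irreducible divisor $D$---is exactly the paper's, and your treatment of part~{\sf (ii)} and of the union $\bigcup_{t\in\bbk^\times}\g^*_{(t),\sf sing}$ via Eq.~\eqref{eq-cdt} matches the paper's proof verbatim.

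The one substantive divergence is at $t=0$. The paper simply invokes \cite[Theorem~3.3]{coadj}, which establishes the {\sl codim}--$2$ property of $\g_{(0)}=\g_0\ltimes\g_1^{\sf ab}$ for \emph{every} $\BZ_2$-contraction of a reductive Lie algebra; no {\sf g.g.s.}\ hypothesis is required. Your proposed route through Theorem~\ref{thm:kot14}(iii) and ``standard Kostant-type arguments'' is not a valid implication as stated: polynomiality of $\gS(\g_{(0)})^{\g_{(0)}}$ together with $\sum_j\deg H_j^\bullet=\bb(\g_{(0)})$ does \emph{not} by itself force $\codim\g^*_{(0),\sf sing}\ge 2$. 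If anything the logic in \cite{contr,trio} runs the other way: the {\sl codim}--$2$ property is used as an \emph{input} to conclude that the $H_j^\bullet$ freely generate the full ring of invariants. So your final paragraph has the dependency reversed, and the obstacle you anticipate is illusory: Lemma~\ref{d-easy} holds for all $\sigma$, including the four ``bad'' ones, and the {\sf g.g.s.}\ assumption is nowhere needed in its proof.
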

\begin{proof}
{\sf (i)} \ The Lie algebra $\g_{(0)}=\g_0\ltimes\g_1^{\sf ab}$ has the 
{\sl codim}--$2$ property, see \cite[Theorem\,3.3]{coadj}. Hence $\codim \g^*_{(0),\sf sing}\ge 2$.
Recall that 
$\dim\g^*_{\sf sing}=\dim\g-3$.  Therefore, the union of the singular subsets 
$\g^*_{(t),\sf sing}$, $t\in \bbk^\times$, 
is a subset of codimension $2$, as follows from Eq.~\eqref{eq-cdt}. Hence
there is a non-empty open subset of $D$ such that 
$\rk \pi_t(\xi)=\rk \pi_t$ for each $\xi\in D$ and  $t\ne \infty$. 

{\sf (ii)} \ Since  $\g_0$ is reductive, we also have  
$\dim (\g_0^*)_{\sf sing}\le \dim\g_0-3$. 
\end{proof}

\begin{lm}      \label{lm-dif-z} 
Suppose that the differentials $\{\textsl{d}(H_j)_{(i,d_j{-}i)}\}$ are linearly dependent 
on an irreducible divisor $D\subset \g^*$. Then $D \subset \g^*_{\infty,\sf sing}$. 
\end{lm}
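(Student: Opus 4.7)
The strategy is a contrapositive: assume $D\not\subset \g^*_{\infty,\sf sing}$ and produce a point $\xi\in D$ at which \emph{all} the bi-homogeneous components of the $H_j$ have linearly independent differentials, contradicting the hypothesis. Since, by Theorem~\ref{free-main}, these components are a set of $\trdeg\gZ$ free generators of $\gZ$, the differentials span a subspace of dimension $\trdeg\gZ$ on a dense open subset, and we need to find such a point of maximal rank inside $D$.

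First, I would intersect $D$ with the open subset $U$ of Lemma~\ref{d-easy}, together with the complement of $\g^*_{\infty,\sf sing}$ (which meets $D$ by assumption). This yields a nonempty open subset of $D$ of points $\xi=\xi_0+\xi_1$ satisfying
\begin{itemize}
\item[{\sf (a)}] $\dim\ker \pi_t(\xi)=\rk\g$ for each $t\ne\infty$;
\item[{\sf (b)}] $\xi_0\in (\g_0^*)_{\sf reg}$;
\item[{\sf (c)}] $\xi\not\in\g^*_{\infty,\sf sing}$, equivalently $\dim\ker\pi_\infty(\xi)=\dim\g_0+\rk\g-\rk\g_0$ by Lemma~\ref{lm:rank-ot-t}.
\end{itemize}
Condition {\sf (a)} is exactly hypothesis {\bf (1)} of Lemma~\ref{lm-sum-reg}. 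The next step is to verify hypothesis {\bf (2)}, namely that $\pi_0(\xi)\vert_{\ker\pi_\infty(\xi)}$ has rank $\dim\ker\pi_\infty(\xi)-\rk\g$. I would repeat the case analysis from the proof of Proposition~\ref{lm-trdeg}: for inner $\sigma$ one has $\ker\pi_\infty(\xi)=\g_0$ and the restriction of $\pi_0(\xi)$ to $\g_0$ is just the Kirillov form at $\xi_0$, whose rank is $\dim\g_0-\rk\g_0=\dim\g_0-\rk\g$ by {\sf (b)}; for outer $\sigma$ condition {\sf (c)} forces $\dim\g_1^{\xi_0}=\rk\g-\rk\g_0$, so $\ker\pi_\infty(\xi)=\g_0\oplus\g_1^{\xi_0}$ and the rank bound from Lemma~\ref{restr}, combined with {\sf (b)}, gives exactly $\dim\ker\pi_\infty(\xi)-\rk\g$.

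With both hypotheses of Lemma~\ref{lm-sum-reg} verified, I conclude
\[
\dim \textsl{d}_\xi\gZ=\rk\g+\tfrac12\rk\pi_\infty(\xi)=\rk\g+\tfrac12(\dim\g_1+\rk\g_0-\rk\g)=\trdeg\gZ,
\]
using {\sf (c)} and Lemma~\ref{lm:rank-ot-t} for the middle equality and Eq.~\eqref{eq:formula} for the last. Since $\gZ$ is freely generated by the $(H_j)_{(i,d_j-i)}$, their differentials at $\xi$ must be linearly independent, contradicting the assumption that they are linearly dependent everywhere on $D$. The main technical obstacle is the rank computation in step~{\sf (2)}, specifically in the outer case, where one must control the bilinear form $\pi_0(\xi)$ on the enlarged kernel $\g_0\oplus\g_1^{\xi_0}$; this is precisely where the auxiliary restriction lemma from the appendix and the regularity {\sf (b)} of~$\xi_0$ in $\g_0^*$ are invoked.
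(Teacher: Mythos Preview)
Your argument is correct and follows essentially the same route as the paper: both reduce to showing that at a generic point $\xi\in D$ with the properties supplied by Lemma~\ref{d-easy}, the failure of $\xi\in\g^*_{\infty,\sf sing}$ forces hypothesis~{\bf (2)} of Lemma~\ref{lm-sum-reg} via the rank computation from Proposition~\ref{lm-trdeg}, whence $\dim\textsl{d}_\xi\gZ=\trdeg\gZ$ and the differentials are independent. The only cosmetic difference is that you phrase it as a contrapositive whereas the paper argues by dichotomy on whether $\rk\pi_\infty(\xi)$ is maximal; the substantive step---verifying~{\bf (2)} in the outer case by combining the upper bound of Lemma~\ref{restr} with the exact value at $\xi_0$---is identical.
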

\begin{proof}
Combining Lemmas~\ref{lm-sum-reg} and \ref{d-easy},  we see that if the differentials of the 
$(H_j)_{(i,d_j{-}i)}$'s  are linearly dependent at a generic point $\xi\in D$, then 
\\ \indent
-- either $\rk \pi_\infty(\xi) < \rk \pi_{\infty}$, 
\\ \indent
-- or $\rk \pi_\infty(\xi) = \rk \pi_{\infty}$, but 
the restriction of $\pi_0(\xi)$ to 
$\ker \pi_{\infty}(\xi)$ does not have the prescribed (maximal possible) rank. 

In the first case, we have $\xi\in \g^*_{\infty,\sf sing}$ by the very definition. Let us show that
the second possibility does not realise. Write $\xi=\xi_0+\xi_1$. By Lemma~\ref{d-easy}{\sf (ii)}, 
we may assume that $\xi_0\in \g_{0,\sf  reg}$. Since $\rk \pi_\infty(\xi) = \rk \pi_{\infty}$, we also have 
$\xi_0\in\g_{\sf reg}$.  As in the proof of Proposition~\ref{lm-trdeg}, 
the rank of $\pi_0(\xi_0)$ on $\ker \pi_{\infty}(\xi)$ equals $\dim\ker \pi_{\infty}(\xi_0)-\rk\g$. 
And again  the same holds for the restriction  of $\pi_0(\xi)$.  
\end{proof} 

We also need the following simple but useful observation on $ \g^*_{\infty,\sf sing}$.
\begin{lm}      \label{lm:D_0-and-g_reg}
The subvariety $\g^*_{\infty,\sf sing}$ is of the form $X_0\times \g_1^*$, where $X_0\subset \g_0^*$ is
a conical subvariety. Moreover, $X_0\cap \g^*_{\sf reg}=\varnothing$.
\end{lm}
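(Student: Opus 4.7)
\medskip

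\noindent\textbf{Proof plan for Lemma~\ref{lm:D_0-and-g_reg}.}

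The plan is to read off the product structure from the explicit description of the stabiliser for $\g_{(\infty)}$ obtained in the proof of Lemma~\ref{lm:rank-ot-t}, and then rule out regular elements in $X_0$ by a short dimension count using $\sigma$-stability of $\g^{\xi_0}$.

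First, I would recall from the proof of Lemma~\ref{lm:rank-ot-t} that for any $\xi=\xi_0+\xi_1\in\g^*$ with $\xi_i\in\g_i^*$, the stabiliser in the degenerate Lie algebra $\g_{(\infty)}$ is $\g_{(\infty)}^\xi=\g_0\oplus\g_1^{\xi_0}$; in particular $\dim\g_{(\infty)}^\xi$ depends only on $\xi_0$. Since $\rk\pi_\infty(\xi)=\dim\g-\dim\g_{(\infty)}^\xi$, the set $\g^*_{\infty,\sf sing}$ is determined by a condition on $\xi_0$ alone, so it has the form $X_0\times\g_1^*$ with
\[
   X_0=\{\xi_0\in\g_0^*\mid \dim\g_1^{\xi_0}>\rk\g-\rk\g_0\}.
\]
For conicality, I would observe that $\g_1^{\xi_0}=\ker\bigl(\ad\,\xi_0\colon\g_1\to\g_1\bigr)$ is unchanged upon replacing $\xi_0$ by $\lb\xi_0$ with $\lb\in\bbk^\times$; hence $X_0$ is a cone in $\g_0^*$.

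Next, for the second assertion, suppose $\xi_0\in\g_0^*\cap\g^*_{\sf reg}$. Using the Killing form identification, $\xi_0$ is $\sigma$-invariant, so $\g^{\xi_0}$ is a $\sigma$-stable subspace of $\g$ and decomposes as $\g^{\xi_0}=\g_0^{\xi_0}\oplus\g_1^{\xi_0}$. Regularity gives $\dim\g^{\xi_0}=\rk\g$, whereas $\dim\g_0^{\xi_0}\ge\ind\g_0=\rk\g_0$ by the very definition of the index. Combining these,
\[
   \dim\g_1^{\xi_0}=\rk\g-\dim\g_0^{\xi_0}\le\rk\g-\rk\g_0,
\]
and since $\rk\g-\rk\g_0$ is the minimal value of $\dim\g_1^{\xi_0}$ (Lemma~\ref{lm:rank-ot-t}), equality must hold. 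Thus $\xi_0\notin X_0$, proving $X_0\cap\g^*_{\sf reg}=\varnothing$.

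There is no serious obstacle here: the product structure is forced by the explicit stabiliser formula, and the emptiness statement is a one-line consequence of $\sigma$-stability of $\g^{\xi_0}$ together with the universal inequality $\dim\g_0^{\xi_0}\ge\rk\g_0$. The only point to be a bit careful about is the implicit identification $\g^*\simeq\g$ (via the Killing form), which ensures that $\xi_0\in\g_0^*$ is indeed $\sigma$-fixed and hence that $\g^{\xi_0}$ inherits the $\BZ_2$-grading.
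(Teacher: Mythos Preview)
Your proof is correct and follows essentially the same approach as the paper: both use the stabiliser formula $\g_{(\infty)}^\xi=\g_0\oplus\g_1^{\xi_0}$ from Lemma~\ref{lm:rank-ot-t} to obtain the product structure, and both finish by computing $\dim\g_1^{\xi_0}$ for $\xi_0\in\g^*_{\sf reg}$ via the $\sigma$-stable decomposition $\g^{\xi_0}=\g_0^{\xi_0}\oplus\g_1^{\xi_0}$. The only minor difference is that the paper asserts the equality $\dim\g_0^{\xi_0}=\rk\g_0$ directly, whereas you use the weaker (and universally valid) inequality $\dim\g_0^{\xi_0}\ge\rk\g_0$, which already suffices.
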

\begin{proof}
Let $\xi=\xi_0+\xi_1\in\g^*$. Since $\g^\xi_{(\infty)}=\g_0\oplus \g_1^{\xi_0}$, the value 
$\rk\pi_\infty(\xi)$ depends only on $\xi_0=\xi\vert_{\g_0}$. 
Therefore, $\g^*_{\infty,\sf sing}=X_0\times \g_1^*$, where $X_0=\g^*_{\infty,\sf sing}\cap\g_0^*$.

It follows from the proof of Lemma~\ref{lm:rank-ot-t} that  $\min_{\xi_0\in\g_0}\dim\g_1^{\xi_0}=
\rk\g-\rk\g_0$, and $\xi\in \g^*_{\infty,\sf sing}$ if and only if 
$\dim\g_1^{\xi_0} >\rk\g-\rk\g_0$. But, if $\xi_0\in\g^*_{\sf reg}$, then
$\dim\g_0^{\xi_0}=\rk\g_0$ and $\dim\g_1^{\xi_0}=\rk\g-\rk\g_0$.
\end{proof}
A particularly nice situation occurs if 
      $ r_0\!: \gS(\g)^{\g}\to \gS(\g_0)^{\g_0}$
is onto. This condition is rather restrictive. If $\sigma$ is inner, then $\bb(\g)=\bb(\g_0)+(\dim\g_1)/2$. 
And since $\sum_{j=1}^l d_j=\bb(\g)$, the nonzero polynomials $\{(H_j)_{(d_j,0)}\}_{j=1}^l$  cannot form a 
generating system in $\gS(\g_0)^{\g_0}$. Hence $r_0$ cannot be onto for the inner $\sigma$.
Another observation is that $\g_0$ has to be simple. This leads to the following list of suitable 
symmetric pairs:
\beq          \label{cs-list}
  (\h \oplus \h,\h), \ (\gt{sl}_n,\gt{so}_n),   \ 
  (\gt{sl}_{2n},\gt{sp}_{2n}), \ (\gt{so}_{2n},\gt{so}_{2n{-}1}), \ 
  (\eus{E}_6,\gt{sp}_8), \ (\eus{E}_6,\eus{F}_4)
\eeq
Among them the map $r_0$ is onto for $(\gt h \oplus \gt h,\gt h)$,
$(\gt{sl}_{2n+ 1},\gt{so}_{2n+ 1})$, 
$(\gt{sl}_{2n},\gt{sp}_{2n})$, $(\gt{so}_{2n},\gt{so}_{2n{-}1})$, and $(\eus{E}_6,\eus{F}_4)$. 
But, the pair $(\eus{E}_6,\eus{F}_4)$ is not needed, because it does not have a {\sf g.g.s.}  

\begin{thm}   \label{thm:maxim1}
{\sf (1)} \ If the restriction homomorphism $r_0:  \gS(\g)^{\g}\to \gS(\g_0)^{\g_0}$ is onto, then 
$\g^*_{\infty,\sf sing}$ does not contain divisors and $\gZ$ is a maximal Poisson-commutative subalgebra 
of\/ $\gS(\g)^{\g_0}$.
\\ \indent
{\sf (2)} \ Conversely, if $\gZ$ is maximal Poisson-commutative, then $r_0$ is onto.
\end{thm}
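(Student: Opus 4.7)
I address the two implications in turn.

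For part~\textsf{(1)}, assume $r_0$ is surjective. The route has three steps: (a) show that $\g^*_{\infty,\sf sing}$ contains no divisor; (b) deduce via Theorem~\ref{ppy-max} that $\gZ$ is algebraically closed in $\gS(\g)$; (c) combine (b) with the transcendence-degree bound of Lemma~\ref{lm:kosik}, which is saturated by $\gZ$ by Theorem~\ref{prop:main2}, to conclude that any Poisson-commutative $\ca\subset\gS(\g)^{\g_0}$ containing $\gZ$ must coincide with $\gZ$. For step~(a), Lemma~\ref{lm:D_0-and-g_reg} reduces the task to $\codim_{\g_0^*}X_0\ge 2$ for $X_0=\{\xi_0\in\g_0^*:\dim\g_1^{\xi_0}>\rk\g-\rk\g_0\}$. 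Decomposing $X_0=(X_0\cap(\g_0^*)_{\sf sing})\cup((\g_0^*)_{\sf reg}\cap\g^*_{\sf sing})$, the first piece has codimension $\ge 3$ by Kostant's codim-$3$ property in $\g_0$, so it remains to prove $(\g_0^*)_{\sf reg}\subset\g^*_{\sf reg}$. Surjectivity of $r_0$ enters here: taking a g.g.s.\ of $\sigma$-eigenvectors $\{H_j\}$ (Lemma~\ref{kosiks-lemma}), the $\rk\g_0$ invariants with $\sigma(H_j)=H_j$ have top components $(H_j)_{(d_j,0)}$ freely generating $\gS(\g_0)^{\g_0}$, and Kostant's criterion in $\g_0$ makes their differentials span $\g_0^{\xi_0}$ at $\xi_0\in(\g_0^*)_{\sf reg}$. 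Provided the remaining $\rk\g-\rk\g_0$ differentials $\textsl{d}_{\xi_0}H_j$ with $\sigma(H_j)=-H_j$---which lie in $\g_1$---are linearly independent, the full collection spans $\g^{\xi_0}$, and Kostant in $\g$ then gives $\xi_0\in\g^*_{\sf reg}$. Step~(b) combines Theorem~\ref{free-main} (free generators of $\gZ$) with Lemma~\ref{lm-dif-z} (the dependence locus lies inside $\g^*_{\infty,\sf sing}$, which by~(a) has codimension $\ge 2$).

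For part~\textsf{(2)}, assume $\gZ$ is maximal Poisson-commutative. I first observe that $\gS(\g_0)^{\g_0}\subset\gS(\g)^{\g_0}$ Poisson-commutes with $\gZ$: if $f\in\gS(\g_0)^{\g_0}$, then $\textsl{d}_\xi f\in\g_0$ implies $\{f,\cdot\}_\infty\equiv 0$, so $\{f,\cdot\}_{\sf LP}=\{f,\cdot\}_t$ for every $t$; taking $h\in\cz_t$ with $t\ne\infty$ gives $\{f,h\}_{\sf LP}=\{f,h\}_t=0$. Since $\gS(\g_0)^{\g_0}$ is itself Poisson-commutative, the algebra generated by it together with $\gZ$ is Poisson-commutative in $\gS(\g)^{\g_0}$, and maximality of $\gZ$ forces $\gS(\g_0)^{\g_0}\subset\gZ$. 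On the other hand, Theorem~\ref{free-main} presents $\gZ$ as a polynomial ring in the bi-homogeneous components $(H_j)_{(i,d_j-i)}$, so any element of $\gZ\cap\gS(\g_0)$, having $\g_1$-degree~$0$, is forced to be a polynomial in the top components $\{(H_j)_{(d_j,0)}:\sigma(H_j)=H_j\}$; hence $\gZ\cap\gS(\g_0)=r_0(\gS(\g)^\g)$. Combining these gives $\gS(\g_0)^{\g_0}=r_0(\gS(\g)^\g)$, i.e., $r_0$ is onto.

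The main obstacle is the linear-independence assertion buried in step~(a): for $\xi_0\in(\g_0^*)_{\sf reg}$ one must know that the $\g_1$-valued differentials $\textsl{d}_{\xi_0}H_j$ with $\sigma(H_j)=-H_j$ are linearly independent, since a bare dimensional count leaves room for degeneracy. I foresee two routes for resolving this: either a case-by-case verification along the short list~\eqref{cs-list} of symmetric pairs admitting surjective $r_0$, using explicit restricted root systems, or an intrinsic argument via a $\sigma$-stable Cartan $\te=\te_0\oplus\te_1$, where surjectivity of $r_0$ translates through Chevalley restriction into $\gS(\te)^{W(\g)}\twoheadrightarrow\gS(\te_0)^{W(\g_0)}$ and controls how the $\g$-root hyperplanes cut $\te_0$.
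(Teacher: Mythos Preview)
Your overall architecture for both parts is sound, and part~\textsf{(2)} is essentially the paper's argument in contrapositive form (the paper simply observes that if $r_0$ is not onto then $\mathsf{alg}\langle\gZ,\gS(\g_0)^{\g_0}\rangle$ is a strictly larger Poisson-commutative subalgebra of $\gS(\g)^{\g_0}$; your step~3 makes explicit why this extension is strict, using the bi-grading on the free generators of $\gZ$).

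For part~\textsf{(1)}, the gap you flag is real, and the paper resolves it by a different case-by-case mechanism than the one you suggest. Rather than verifying the linear independence of the $\g_1$-valued differentials $\textsl{d}_{\xi_0}H_j$ with $\sigma(H_j)=-H_j$ directly, the paper checks, for each of the (few) pairs in~\eqref{cs-list} with $r_0$ onto, that $\g_0$ contains a \emph{nilpotent} element which is regular in $\g$. From this single piece of case-by-case data a uniform argument follows: the existence of such an element forces \emph{every} fibre of the categorical quotient $\g_0\to\g_0\md G_0$ to meet $\g_{\sf reg}$, whence $(\g_0^*)_{\sf reg}\subset\g^*_{\sf reg}$. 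This is precisely the inclusion you were after, obtained without ever touching the differentials of the $H_j$ with $\esi_j=-1$. After that, your steps~(b) and~(c) are exactly the paper's.

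So your differential approach and the paper's regular-nilpotent approach are two routes to the same key inclusion $(\g_0^*)_{\sf reg}\subset\g^*_{\sf reg}$. The paper's route has the advantage that the case-by-case ingredient (existence of a single regular nilpotent in $\g_0$) is a coarse, easily checked condition, and the passage from it to the inclusion is uniform. Your route would require finer information at each regular semisimple point of $\g_0$; as you note, filling it would likely devolve into the same kind of case analysis, and the paper itself remarks (see Remark~\ref{equiv}) that no case-free proof of this implication is currently known.
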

\begin{proof}
(1) \ The list of suitable symmetric pairs is quite short. For each item in the list, $\g_0$ contains a 
nilpotent element that is regular in $\g$. This implies that {\bf every} fibre of the quotient morphism 
$\g_0\to \g_0\md G_0$ contains a regular element of $\g$ and hence 
 $(\g_0)^*_{\sf reg}\subset \g^*_{\sf reg}$. Thus, 
$\dim(\g^*_{\sf sing}\cap\g_0)\le \dim\g_0-3$. Since $\rk \pi_{\infty}(\xi)=\rk \pi_{\infty}$ for each 
$\xi\in\g_0^*\cap \g^*_{\sf reg}$ (Lemma~\ref{lm:D_0-and-g_reg}), the subset 
$\g^*_{\infty,\sf sing}$ does not contain divisors. Therefore, the differentials $\textsl{d}(H_j)_{(i,d_j{-}i)}$ 
are linearly independent on a big open subset, in view of Lemma~\ref{lm-dif-z}. 
Then, by Theorem~\ref{ppy-max}, $\gZ$ is an algebraically closed subalgebra of $\gS(\g)$. Since it is 
a Poisson-commutative subalgebra of $\gS(\g)^{\g_0}$ of the maximal possible transcendence degree, 
it is also maximal. 

(2) \ If $r_0$ is not onto, then the algebra generated by $\gS(\g_0)^{\g_0}$ and $\gZ$ is
Poisson-commutative, is contained in $\gS(\g)^{\g_0}$, and properly contains $\gZ$.
\end{proof} 

\begin{rmk}     \label{equiv} 
{\sf (1)} \  Consider the following four conditions: 
\begin{itemize}
\item[{\sf (a)}] the restriction homomorphism $r_0\!: \gS(\g)^{\g}\to \gS(\g_0)^{\g_0}$ is onto; 
\item[{\sf (b)}] $\g _0$ contains a regular nilpotent element of $\g$; 
\item[{\sf (c)}] $\g^*_{\infty,\sf sing}$ does not contain divisors; 
\item[{\sf (d)}] $\gZ$ is a maximal Poisson-commutative subalgebra of $\gS(\g)^{\g_0}$.
\end{itemize}
In the  proof of Theorem~\ref{thm:maxim1}(1), we have seen that (a) $\Rightarrow$ (b) $\Rightarrow$ (c) 
$\Rightarrow$ (d), whereas part (2) of Theorem~\ref{thm:maxim1} states that (d) $\Rightarrow$ (a).
Thus, all these conditions are equivalent. One can also give a direct proof for (b) $\Rightarrow$ 
(a) that does not invoke $\g_{(\infty)}$ and $\gZ$. 
However, the implication (a) $\Rightarrow$ 
(b)  is obtained case-by-case as yet.
\\ \indent
{\sf (2)}  \  There is a {\sf g.g.s.} for $(\g,\g_0)$ if and only if the restriction homomorphism 
$r_1\!: \gS(\g)^{\g}\to \bbk[\g_1^*]^{\g_0}$ is onto~\cite{coadj,contr}. Therefore, $\gZ$ is a 
polynomial maximal 
Poisson-commutative subalgebra of $\gS(\g)^{\g_0}$ whenever both $r_0$ and $r_1$ are onto. 
\end{rmk}

Our ultimate goal is to prove that, in general, $\tilde\gZ=\mathsf{alg}\langle \gZ, \gS(\g_0)^{\g_0}\rangle$ 
is a polynomial maximal Poisson-commutative subalgebra of $\gS(\g)^{\g_0}$. Unfortunately, the proof 
requires many technical preparations, if 
$\g^*_{\infty,\sf sing}$ contains divisors (i.e., $r_0$ is not onto).

\begin{lm}    \label{lm-div} 
Suppose that $\dim \g^*_{\infty,\sf sing} = n{-}1$, and 
let $D\subset  \g^*_{\infty,\sf sing}$ be an irreducible component of dimension $n-1$.
Then   
\begin{itemize}
\item[\sf (i)] \ $D=D_0\times\g_1^*$, where $D_0$ is a $G_0$-stable conical divisor in $\g_0^*$, and 
$D_0$ does not contain regular elements of $\g$;
\item[\sf (ii)] \  generic elements of $D_0$ are semisimple, regular in $\g_0\simeq\g_0^*$, and 
subregular in $\g$;
\item[\sf (iii)] \ 
$\rk \pi_{\infty}(\xi)=\rk \pi_{\infty}{-}2$ \ for generic point $\xi\in D$.
\end{itemize} 
\end{lm}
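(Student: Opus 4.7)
The plan is to extract (i) directly from Lemma~\ref{lm:D_0-and-g_reg}, analyse $D_0$ to prove (ii) using the codim--$3$ property of $\g_0$ together with the parity constraint on centraliser dimensions for semisimple elements, and finally deduce (iii) by a one-line dimension count.

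For (i), Lemma~\ref{lm:D_0-and-g_reg} gives $\g^*_{\infty,\sf sing}=X_0\times\g_1^*$ with $X_0\subset\g_0^*$ conical and disjoint from $\g^*_{\sf reg}$. Hence any top-dimensional irreducible component of $\g^*_{\infty,\sf sing}$ must have the form $D_0\times\g_1^*$, where $D_0$ is an irreducible component of $X_0$ of dimension $\dim\g_0-1$. The identity $\g_1^{\Ad(g)\xi_0}=\Ad(g)\g_1^{\xi_0}$ for $g\in G_0$ yields $G_0$-stability of $X_0$, and thus of $D_0$; likewise $\g_1^{t\xi_0}=\g_1^{\xi_0}$ for $t\in\bbk^\times$ gives conicality. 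The final clause of (i) follows from $X_0\cap\g^*_{\sf reg}=\varnothing$.

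For (ii), the codim--$3$ property of $\g_0$ implies that a generic point of the divisor $D_0$ is regular in $\g_0$, i.e.\ $\dim\g_0^{\xi_0}=\rk\g_0$. Irreducibility, $G_0$-stability and conicality of $D_0$, combined with the fact that the non-semisimple stratum inside the regular locus of $\g_0^*$ sits in strata of codimension $\ge 2$ parametrised by proper Levi centralisers, force a dense open subset of $D_0$ to consist of semisimple elements. For such $\xi_0$, $\g^{\xi_0}$ is reductive of rank $\rk\g$, so $\dim\g^{\xi_0}-\rk\g$ is even; as $\xi_0\in X_0$ excludes $\xi_0\in\g^*_{\sf reg}$, this difference is $\ge 2$. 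Equality must hold generically, since the locus $\{\xi_0\mid\dim\g^{\xi_0}\ge\rk\g+4\}$ is a finite union of orbit strata of codimension $\ge 2$ in $\g_0^*$ and thus cannot exhaust the divisor $D_0$. Hence generic $\xi_0\in D_0$ is semisimple, regular in $\g_0$, and subregular in $\g$.

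Part (iii) is now immediate: using $\dim\g_{(\infty)}^\xi=\dim\g_0+\dim\g_1^{\xi_0}$ from the proof of Lemma~\ref{lm:rank-ot-t}, we have $\rk\pi_\infty(\xi)=\dim\g_1-\dim\g_1^{\xi_0}$ and $\rk\pi_\infty=\dim\g_1-(\rk\g-\rk\g_0)$, whence for generic $\xi=\xi_0+\xi_1\in D$
\[
\rk\pi_\infty-\rk\pi_\infty(\xi)=\dim\g_1^{\xi_0}-(\rk\g-\rk\g_0)=\dim\g^{\xi_0}-\rk\g=2,
\]
the last equality coming from (ii). The main obstacle is the codimension bookkeeping in (ii): once one knows semisimplicity is generic on $D_0$, the parity argument identifies the generic stabiliser correctly, but establishing semisimplicity itself relies on the $G_0$-stability and conicality of $D_0$ together with a careful comparison of centralisers in $\g_0$ and in $\g$ governed by the involution $\sigma$.
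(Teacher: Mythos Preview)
Your argument for (i) and (iii) is fine and matches the paper. The gap is in (ii), specifically in the step where you claim that generic points of $D_0$ are semisimple.

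You assert that ``the non-semisimple stratum inside the regular locus of $\g_0^*$ sits in strata of codimension $\ge 2$''. This is false. The set of non-semisimple regular elements of $\g_0$ is an open dense subset of the discriminant divisor $\g_0\setminus(\g_0)_{\sf reg\,ss}$, hence has codimension~$1$ in $\g_0$. (Already for $\g_0=\tri$: the nonzero nilpotents are regular, non-semisimple, and form a set of codimension~$1$. For $\g_0=\gt{sl}_3$: a generic element of the discriminant has eigenvalues $(a,a,-2a)$ with $a\ne 0$ and a nontrivial Jordan block, hence is regular but not semisimple.) Consequently, nothing you have said excludes the possibility that $D_0$ is an irreducible component of the discriminant of $\g_0$, whose generic points are regular in $\g_0$ yet non-semisimple. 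Your parity argument then also collapses, since $\g^{\xi_0}$ need not be reductive for non-semisimple $\xi_0$.

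The paper's proof confronts exactly this possibility in its case~(b). Assuming $D_0$ contains no regular semisimple element of $\g_0$, one has $\te_0\cap D_0$ contained in the reflection hyperplanes of $W_0$; for a generic $\nu$ on such a hyperplane, $\g_0^\nu=\tri\oplus\z_0$, and the symmetric pair $(\g^\nu,\g_0^\nu)$ is then analysed case by case. In each case one locates inside $D_0$ an element $\nu+e$ with $e$ regular nilpotent in $[\g^\nu,\g^\nu]$, which is regular in $\g$, contradicting $D_0\cap\g^*_{\sf reg}=\varnothing$. This is where the hypothesis from (i) is actually used, and it cannot be replaced by a blanket codimension estimate in $\g_0$.

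A secondary point: even granting semisimplicity, your claim that $\{\xi_0:\dim\g^{\xi_0}\ge\rk\g+4\}$ has codimension $\ge 2$ in $\g_0^*$ is asserted without proof. The paper justifies the analogous step (its case~(a)) by observing that for $\nu\in\te_0$ regular in $\g_0$, the semisimple part of $[\g^\nu,\g^\nu]$ is a sum of copies of $\tri$, each imposing a single linear condition $\beta_i^\vee(\nu)=0$ on $\te_0$; two or more copies would force $\nu$ into a finite union of codimension-$2$ subspaces of $\te_0$.
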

\begin{proof}
{\sf (i)} \ This follows from Lemma~\ref{lm:D_0-and-g_reg}. 

{\sf (ii)} \ If $\sigma$ is \un{inner}, then $\g_0$ contains a Cartan subalgebra $\gt t$ of $\g$ and 
$\te\cap D_0$ is a $W_0$-stable divisor in $\te$, where $W_0$ is the Weyl group of $(\g_0,\te)$.
It is easily seen that any such divisor contains a subregular element of $\g$. 

The case of an  \un{outer} $\sigma$ is more involved. 
We use an argument, which is also valid for the inner case. 
If $\te_0\subset\g_0$ is a Cartan subalgebra of $\g_0$, then a generic element 
$\nu\in D_0\cap\gt t_0$  is either regular or subregular in $\g_0$.  Consider these two possibilities in turn.

(a) \ Suppose first that $\nu$ is regular in $\g_0$. Then $\g_{0}^\nu=\te_0$ and therefore 
$\g^\nu$ is a sum of a toral subalgebra and several copies, say $k$, of $\tri$. 
Let $\es_i$ be the $i$-th copy of $\tri$. 
Every such $\es_i$ is determined by a root $\beta_i$ of $\g$. That is, 
\[
     \es_i=\g_{-\beta_i}\oplus (\es_i)^\sigma \oplus \g_{\beta_i} .
\]
Moreover, the one-dimensional subspace $(\es_i)^{\sigma}$ is generated by the 
coroot $\beta_i^\vee$. It is also clear that $(\es_i)^{\sigma}\subset \te_0$ and $\beta_i^\vee$ is
orthogonal to $\nu$. Assume that $k\ge 2$. Then $\nu$ is orthogonal to at least two different coroots.
Since the number of relevant pairs $\{\beta_i,\beta_j\}$ is finite, we obtain that $D_0\cap\gt t_0$ lies in 
a finite union of subspaces of $\te_0$ of codimension~${\ge}2$. A contradiction! Hence 
$k\le 1$. If $k=0$, then $\nu$ is regular in $\g$, which is impossible, see {\sf (i)}. Thus,
$k=1$ and $\nu$ is subregular in $\g$. 
\\ \indent
(b) \ Suppose now that $D_0$ 
does not contain regular semisimple elements of $\g_0$. Our goal is to prove that this case does not occur.  \\ \indent
Here  $\te_0\cap D_0$ is a union of reflection hyperplanes of $W_0$. Let $\z_0$ be one of these 
hyperplanes and $\nu\in\z_0$ generic.  Then 
$\g_{0}^\nu=\es \oplus \z_0$, where $\es\simeq\tri$ and $\z_0$ is the centre of $\g_{0}^\nu$.
Here  $[\z_0,\g^\nu]=0$, since $\nu\in\z_0$ is generic. 
Write $\g^\nu=\h\oplus \z(\g^\nu)$, where $\h=[\g^\nu,\g^\nu]$ is semisimple. Then the symmetric pair 
$(\g^\nu,\g_{0}^\nu)$ decomposes as
\[
    (\g^\nu,\g_{0}^\nu)=(\h,\es)\oplus (\z(\g^\nu),\z_0).
\] 
The only possibilities for the symmetric pair $(\h,\es)$ are:
\beq     \label{pairs}
   (\gt{sl}_2 \oplus \gt{sl}_2,\gt{sl}_2),  \  
   (\gt{sl}_3,\gt{so}_3\simeq\tri), \ 
   (\gt{sl}_2,\gt{sl}_2). 
\eeq
For $\es=[\g_{0}^\nu,\g_{0}^\nu]$, the intersection 
$D_0\cap (\bbk \nu\oplus \es)$ is a conical divisor of $\bbk \nu\oplus \es$ that contains $\nu$.  
If $\eta\in\es$ is non-zero semisimple, 
then $\nu+\eta\in (\g_0)_{\sf reg}$ is semisimple. Hence
$\nu+\eta\not\in D_0$.  Therefore, 
$D_0\cap (\bbk \nu\oplus \es)$ has to contain a sum $\nu+e$, where $e\in\es $ is regular nilpotent. 
For all pairs in~\eqref{pairs},   
$e$ is also regular in $\h$. Hence $e$ is a regular element of $\g^\nu$. 
Thereby  $\nu+e$ is a regular element of $\g$. However, this contradicts part {\sf (i)}.

Therefore, case (b) does not materialise and, according to (a),
 $D_0$ contains a semisimple element $\nu$ that is regular in $\g_0$ and subregular in $\g$. Since
 $D_0\cap\g_{\sf reg}=\varnothing$, subregular semisimple elements of $\g$ are dense in $D_0$.

{\sf (iii)} \ Since $\nu$ is regular in $\g_0$  and subregular in $\g$, we have $\dim\g_{0}^\nu=\rk\g_0$ and
$\dim\g_{1}^\nu = \rk\g + 2-\rk\g_0$. The latter precisely means that
$\rk \pi_{\infty}(\nu)=\rk \pi_{\infty}{-}2$ for $\nu$ in a non-empty open subset of $D_0$.
This completes the proof.   
\end{proof}

\begin{ex}         \label{ex-sl1}
Let $(\g,\g_0)=(\gt{sl}_{2n},\gt{so}_{2n})$. Then $D_0\subset \g_0$ is the zero set of the Pfaffian. If 
$\g_0$ consists of skew-symmetric matrices with respect to the antidiagonal, then \\
\centerline{$x={\rm diag}(a_1,\ldots,a_{n{-}1},0,0,-a_{n{-}1},\ldots,-a_1)\in D_0$} 
\\ is subregular whenever all $a_i$ are nonzero and $a_i\ne \pm a_j$ for $i\ne j$.
\end{ex}

Recall that $\{H_i\}$ is a {\sf g.g.s.} for $\sigma$ such that $\sigma(H_i)=\esi_i H_i=\pm H_i$ for each $i$. 
As before, $d_i=\deg H_i$ and $l=\rk\g$. 
Until the end of this section, we assume that $d_1\le \cdots \le d_l$.
If $\g$ is simple, then there is a unique basic invariant of degree $d_l$, i.e., $d_{l-1}< d_l$.

\begin{lm}    \label{subreg-1}
If $\g$ is simple and $x\in\g$ is subregular, then the differentials $\{\textsl{d}_x H_i \mid i<l\}$ are 
linearly independent. Moreover, $\sigma(H_l)=H_l$ unless $(\g,\g_0)= (\gt{sl}_{2k+ 1},\gt{so}_{2k+ 1})$, 
where $l=2k$ and $d_l=2k+1$.
\end{lm}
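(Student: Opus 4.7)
The two assertions are essentially independent.

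\emph{First assertion.} Since each $H_i$ is $G$-invariant, $\textsl{d}_xH_i$ lies in $\z(\g^x)$ for every $x\in\g$. For subregular $x$ in a simple $\g$, $\dim\g^x=l+2$ and the reductive part of $\g^x$ contains an $\tri$-summand; therefore $\dim\z(\g^x)=l-1$. By Kostant flatness of $\chi=(H_1,\ldots,H_l)$ combined with $\overline{\chi(\g^*_{\sf sing})}$ being a hypersurface in $\bbk^l$, the rank of $\textsl{d}_x\chi$ at subregular $x$ equals $l-1$. Hence the $l$ differentials $\textsl{d}_xH_i$ span $\z(\g^x)$ and satisfy a unique (up to scalar) linear relation $\sum_i c_i\textsl{d}_xH_i=0$; the first assertion is equivalent to $c_l\ne 0$.

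My plan is to pass to a Cartan $\te$ and a semisimple subregular $x\in\te$ lying on a unique reflection hyperplane $\ker\alpha$. Writing $h_j=H_j|_\te$, Chevalley's formula $\det(\partial h_i/\partial t_j)=\operatorname{const}\cdot\prod_{\beta>0}\beta$ has a simple zero along $\ker\alpha$, confirming rank $l-1$. If $c_l=0$, then $h_l|_{\ker\alpha}$ would be locally constant, and by $W$-invariance and homogeneity $h_l$ would vanish on every reflection hyperplane of $W$. This contradicts the well-known fact that the top-degree basic $W$-invariant of a simple Lie algebra does not vanish on any reflection hyperplane (easily verified case by case: trace powers in $A_l, B_l, C_l$; the top power-sum for $D_l$; standard Chevalley invariants for the exceptional types). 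Hence the claim holds for generic semisimple subregular $x$. For arbitrary subregular $x$ one argues by semicontinuity together with a direct verification at the subregular nilpotent: in classical types via the explicit formula $\textsl{d}_xH_j\propto x^j$ (so $x,\ldots,x^{l-1}$ are linearly independent while $x^l=0$), and in exceptional types by a case analysis using the Brieskorn--Slodowy description of the transverse slice.

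\emph{Second assertion.} By Lemma~\ref{lm:outer-inv}{\sf (1)}, $\sigma(H_l)=H_l$ is equivalent to $d_l^\bullet$ being even. For inner $\sigma$ every $H_j$ is $\sigma$-fixed. For outer $\sigma$ admitting a {\sf g.g.s.}, the possible indecomposable pairs are $(\gt{sl}_n,\gt{so}_n)$, $(\gt{sl}_{2n},\gt{sp}_{2n})$, $(\gt{so}_{2n},\gt{so}_{2n-1})$, $(\eus E_6,\gt{sp}_8)$, $(\eus E_6,\eus F_4)$. For $(\gt{sl}_n,\gt{so}_n)$ with $\sigma(x)=-x^T$, $\esi_j=(-1)^{d_j}$, so $\esi_l=(-1)^n$, which is $-1$ exactly when $n=2k+1$ is odd---this is the advertised exception. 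For $(\gt{sl}_{2n},\gt{sp}_{2n})$, $d_l=2n$ is even, so $\esi_l=1$. For $(\gt{so}_{2n},\gt{so}_{2n-1})$ the outer $\sigma$ preserves trace powers, giving $\esi_l=1$. For the $\eus E_6$-cases, $d_l=12$ and the top-degree invariant is $\sigma$-fixed by the known decomposition of the $\eus E_6$ basic invariants under an outer involution (degrees $2,6,8,12$ are $\sigma$-fixed; degrees $5,9$ are anti-fixed).

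The main obstacle is the non-vanishing of $c_l$ in the first assertion, which rests on the classical but non-trivial statement that the top-degree basic $W$-invariant of a simple Lie algebra does not vanish on any reflection hyperplane, and on extending the generic-point conclusion uniformly to subregular nilpotent elements (for which a different argument is needed because $\textsl{d}_xH_l$ itself typically vanishes).
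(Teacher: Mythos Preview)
Your argument for the first assertion contains a genuine logical gap. You correctly reduce to showing that the unique (up to scalar) relation $\sum_i c_i\,\textsl{d}_x H_i=0$ has $c_l\neq 0$, but the implication ``if $c_l=0$, then $h_l|_{\ker\alpha}$ would be locally constant'' is false: when $c_l=0$ the relation involves only $H_1,\dots,H_{l-1}$ and says nothing whatsoever about $H_l$. What you would actually need is that $h_1|_{\ker\alpha},\dots,h_{l-1}|_{\ker\alpha}$ are algebraically independent, which is a different (and not obviously easier) statement. Moreover, your passage from ``generic semisimple subregular'' to ``arbitrary subregular'' via semicontinuity goes the wrong way: linear independence is an open condition, so establishing it at generic points of the subregular locus does not propagate to special points such as the subregular nilpotent.

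The paper proceeds in the opposite direction and far more economically. For the subregular nilpotent $e$ it quotes two known facts: $\textsl{d}_e H_l=0$ (Varadarajan~\cite{va}) and the linear independence of $\textsl{d}_e H_1,\dots,\textsl{d}_e H_{l-1}$ (Slodowy~\cite{815}). For an arbitrary subregular $x$, the theory of associated cones~\cite{bokr} gives $Ge\subset\overline{\bbk^\times(Gx)}$; since linear independence is open, it then holds at $x$ as well. Your second assertion follows the same case-by-case strategy as the paper, but your list of outer involutions is incomplete (you omit $(\gt{so}_{2n},\gt{so}_{2p+1}\oplus\gt{so}_{2q+1})$ with $p+q+1=n$ and $p,q\ge 1$), and the restriction to pairs admitting a \textsf{g.g.s.} is unnecessary---the lemma makes no such assumption.
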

\begin{proof}
Let $e\in\g$ be a subregular nilpotent element. Then $\textsl{d}_e H_l=0$~\cite[Corollary\,2]{va} and 
$\{\textsl{d}_e H_i\mid i<l\}$ are linearly independent~\cite[Chapter\,8.2]{815}.  If $x$ is subregular and
non-nilpotent,  then
the theory of {\it associated cones\/} developed in 
\cite[\S\,3]{bokr} shows that $Ge\subset \overline{\bbk^\times\! (G x)}$. 
This implies that $\textsl{d}_x H_i$ with $i<l$ are linearly independent, too. 

The equality $\sigma(H_l)=H_l$ is obvious for the inner involutions. If $\sigma$ is outer, 
then going through the list of outer involutions, one checks that $\sigma(H_l)=-H_l$ if and only if 
$\g=\gt{sl}_{2k+1}$ and $l=2k$. Here necessary $\g_0=\gt{so}_{2k+1}$.
\end{proof}

We need below some formulae for the differential and partial derivatives of a homogeneous polynomial
$F\in\gS(\g)=\bbk[\g^*]$. If $x\in \g^*$ and $d=\deg F$, then $\partial^{d-1}_x F$ is a linear form on
$\g^*$, i.e., an element of $\g$. In fact, one has
\beq    \label{diff-versus-partial}
    (d-1)! \, \textsl{d}_x F= \partial^{d-1}_x F.
\eeq 
By linearity, it suffices to check this for a monomial of degree $d$.
Furthermore, for the operator $\partial_{x+sx'}^k: \gS^m(\g)\to \gS^{m-k}(\g)$ with $x,x'\in\g^*$ and 
$s\in\bbk$, there is the following expansion:
\beq   \label{eq:partial}
  \partial^k_{x{+}sx'} =\partial_x^k + \binom{k}{1}s\partial_{x'}\partial_x^{k{-}1}+\dots
   + \binom{k}{i} s^i \partial_{x'}^i \partial^{k{-}i}_x + \dots + s^k\partial^k_{x'} \ .
\eeq

\begin{lm}                 \label{subreg}
Suppose that the restriction homomorphism $r_0$ is not onto 
(equivalently, $\g^*_{\infty,\sf sing}$ contains divisors). Then
\begin{itemize}
\item[\sf (i)] \ there is $x\in\g_0^*\simeq\g_0$ such that $x$ is semisimple, 
regular in $\g_0$, and subregular in $\g$ (i.e., $\dim\g^x=\rk\g+2$). Moreover, 
for a generic $x'\in\g_1^*\simeq\g_1$,  we have 
$y:=x+x'\in \g_{\sf reg}$;
\item[\sf (ii)] \ $\lim_{t\to\infty} \left<\textsl{d}_y F \mid F\in\cz_t\right>_{\bbk}=\lim_{s\to 0} \varphi_{s}(\g^{x+sx'}) (=:\BV)$;
\item[\sf (iii)] \ $\dim(\BV /\BV\cap \g_0)=\rk\g-\rk\g_0+1$.  
\end{itemize}
\end{lm}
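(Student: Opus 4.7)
For (i), the plan is to pick $x\in D_0$ generic so that Lemma~\ref{lm-div}(ii) applies: $x$ is semisimple, regular in $\g_0$, and subregular in $\g$. Then $\g^x$ is reductive of dimension $\rk\g+2$, with $\dim\z(\g^x)=\rk\g-1$ and $\es:=[\g^x,\g^x]\cong\tri$. Since $x$ is regular in $\g_0$, the algebra $\g_0^x$ is a Cartan of $\g_0$, hence abelian, so $\es$ cannot lie in $\g_0$; and $\es\subset\g_1$ is impossible because then $\es=[\es,\es]\subset[\g_1,\g_1]\subset\g_0$ would force $\es=0$. Therefore $\es$ carries the mixed $\BZ_2$-grading $(\bbk h_\es)\oplus(\bbk e_\es\oplus\bbk f_\es)$ with $h_\es\in\g_0$ and $e_\es,f_\es\in\g_1$. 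Now $e_\es\in\g_1$ is nilpotent and commutes with $x$, so the Jordan decomposition of $x+e_\es$ gives $\g^{x+e_\es}=\z(\g^x)\oplus\es^{e_\es}$ of dimension $\rk\g$, and hence $x+e_\es\in\g_{\sf reg}$. Openness of $\g_{\sf reg}$ then makes $\{x'\in\g_1:x+x'\in\g_{\sf reg}\}$ a non-empty open (hence dense) subset of $\g_1$, so a generic $x'$ gives $y:=x+x'\in\g_{\sf reg}$.

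For (ii), I would use $\cz_t=\vp_\mu^{-1}(\gS(\g)^\g)$ with $t=\mu^2$. Applying the chain rule to $F=\vp_\mu^{-1}(H)$ for $H\in\gS(\g)^\g$, and using the self-adjointness of $\vp_\mu$ with respect to the Killing form (so $\vp_\mu^t=\vp_\mu$ under $\g^*\simeq\g$), yields $\textsl{d}_yF=\vp_{\mu^{-1}}\bigl(\textsl{d}_{x+\mu^{-1}x'}H\bigr)$. Setting $s:=\mu^{-1}$, so that $t\to\infty$ corresponds to $s\to 0$, the point $x+sx'$ lies in $\g_{\sf reg}$ for $s$ in a punctured neighbourhood of $0$ by (i) and openness; Kostant's regularity criterion then yields $\textsl{d}_y\cz_t=\vp_s(\g^{x+sx'})$. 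The resulting morphism from this punctured neighbourhood into $\mathrm{Gr}(\rk\g,\g)$ extends uniquely to $s=0$ by properness of the Grassmannian, and its value at $s=0$ is $\BV$.

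For (iii), I would compute $\BV$ explicitly using the Kostant basis $\omega_i(s):=\partial^{d_i-1}_{x+sx'}H_i$ of $\g^{x+sx'}$ and track the leading order of $\vp_s\omega_i(s)$ in $s$ via~\eqref{eq:partial}. The key observation is that $\partial_x^a\partial_{x'}^bH_i$ has $\sigma$-sign $\esi_i(-1)^b$, so lies in $\g_0$ or $\g_1$ accordingly, while $\vp_s$ fixes $\g_0$-summands and multiplies $\g_1$-summands by $s$. After rescaling each $\vp_s\omega_i(s)$ by the appropriate power of $s$, the limits at $s=0$ split into three cases: (a) for $\esi_i=+1,\ i<l$, the limit is $\partial_x^{d_i-1}H_i\in\g_0$; (b) for $\esi_i=-1,\ i<l$, the limit is $(d_i-1)\partial_{x'}\partial_x^{d_i-2}H_i+\partial_x^{d_i-1}H_i$, with $\g_0$- and $\g_1$-summands; (c) for $i=l$, Lemma~\ref{subreg-1} gives $\esi_l=+1$ (the excluded case $(\gt{sl}_{2k+1},\gt{so}_{2k+1})$ has $r_0$ surjective and is ruled out by hypothesis) and $\partial_x^{d_l-1}H_l=0$ (subregularity plus~\eqref{diff-versus-partial}), so $\vp_s\omega_l(s)$ vanishes to order $s^2$ and the rescaled limit equals $(d_l-1)\partial_{x'}\partial_x^{d_l-2}H_l+\binom{d_l-1}{2}\partial_{x'}^2\partial_x^{d_l-3}H_l$, with $\g_1$- and $\g_0$-summands. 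Since $\dim\BV=\rk\g=l$, these $l$ limits form a basis of $\BV$. Using $\rk\g_0=\#\{i:\esi_i=+1\}$ from Lemma~\ref{lm:outer-inv}(2), the projection of $\BV$ to $\g_1$ is spanned by the $l-\rk\g_0$ vectors from (b) (independent by Lemma~\ref{subreg-1}) together with the extra vector $(d_l-1)\partial_{x'}\partial_x^{d_l-2}H_l$ from (c); the latter escapes the previous span for generic $x'$ because $\lim_{\epsilon\to 0}\g^{x+\epsilon x'}$ is an $l$-dimensional subspace of $\g^x$ (by properness again). Hence the $\g_1$-projection has dimension $(l-\rk\g_0)+1$, giving $\dim(\BV/\BV\cap\g_0)=\rk\g-\rk\g_0+1$.

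The hardest part of the plan is the leading-order bookkeeping in case (c) of (iii): because of the subregular vanishing $\partial_x^{d_l-1}H_l=0$, the correct rescaling of $\vp_s\omega_l(s)$ is by $s^{-2}$ rather than $s^0$ or $s^{-1}$, and this extra factor of $s$ is what ultimately contributes the ``$+1$'' in $\rk\g-\rk\g_0+1$; one must also consistently track $\sigma$-parities through the whole expansion to identify the $\g_0$- and $\g_1$-components of each limit correctly.
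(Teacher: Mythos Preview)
Your arguments for (i) and (ii) are essentially the paper's own, and are fine.

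The gap is in (iii), case (c). You assert $\partial_x^{d_l-1}H_l=0$, citing ``subregularity plus~\eqref{diff-versus-partial}''. But Lemma~\ref{subreg-1} does not say this. Varadarajan's result gives $\textsl{d}_e H_l=0$ only for a subregular \emph{nilpotent} $e$; for the subregular \emph{semisimple} $x$ produced in~(i) one merely knows that $\textsl{d}_x H_l$ lies in the span of $\{\textsl{d}_x H_j:j<l\}$, and it is typically nonzero. (For instance, with $\g=\gt{sl}_3$, $H_2=\frac{1}{3}\tr X^3$ and $x=\mathrm{diag}(a,a,-2a)$, $a\ne 0$, one gets $\textsl{d}_x H_2=-a\,\textsl{d}_x H_1\ne 0$.) Consequently the order-$s^0$ term of $\vp_s\omega_l(s)$ survives in $\g_0$, the naive limit for $i=l$ is linearly dependent on the limits for $i<l$ with $\esi_i=+1$, and your count of the $\g_1$-projection collapses to $\rk\g-\rk\g_0$ rather than $\rk\g-\rk\g_0+1$.

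The paper confronts precisely this difficulty, writing ``It would be nice to have $\textsl{d}_x H_l=0$ \dots\ Since this is not always the case, we need a trick.'' The trick is to pass to the central extension $\tilde\g=\g\oplus\ce$, adjoin a central variable $z$, and replace $H_l$ by the homogenised combination $\mathbf{H}_l=H_l+\sum_{j<l}c_j z^{d_l-d_j}H_j+c_0 z^{d_l}$, with the $c_j$ chosen so that $\textsl{d}_{\bx}\mathbf{H}_l=0$ at $\bx=x+\gamma$. (A plain linear correction $H_l-\sum c_j H_j$ is impossible because the $d_j$ differ; the extra variable $z$ is what makes it work.) Once this vanishing is forced, the paper uses the Mishchenko--Fomenko subalgebra $\ca_{\bx}$ to see that $F:=\partial_{\bx}^{d_l-2}\mathbf{H}_l\in\gS^2(\tilde\g^{\bx})$ is a $\tilde\g^{\bx}$-invariant \emph{not} contained in $\gS^2\bigl(\z(\g^x)\oplus\ce\bigr)$; writing $\tilde\g^{\bx}=\ce\oplus\z(\g^x)\oplus\tri$ this forces $F\in (4ef+h^2)+\gS^2(\z(\g^x)\oplus\ce)$, so $\partial_{x'}F$ has a nonzero component in $\langle e,f\rangle_{\bbk}\subset\g_1\setminus\z(\g^x)_1$ for generic $x'$. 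This is where the ``$+1$'' comes from. Your closing independence argument (that $\lim_{\epsilon\to 0}\g^{x+\epsilon x'}$ is $l$-dimensional inside $\g^x$) is too coarse to substitute for this: it does not by itself guarantee that the relevant $\g_1$-component escapes $\z(\g^x)_1$.
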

\begin{proof}
{\sf (i)} \ The existence of such an $x$ follows from Lemma~\ref{lm-div}. Then
$\g^x_0=\te_0$ and if $x'$ is a generic element of $\g^x_1$, then $y$ is regular in $\g$.
Hence $x+x'\in\gt g_{\sf reg}$ for almost all $x'\in\gt g_1$.

{\sf (ii)} \ By the definition of $\{\,\,,\,\}_{t}$, we have $\cz_t=\varphi_s^{-1}(\gS(\g)^{\g})$ if
$t\ne 0,\infty$ and $s^2=t$. Let $\varphi_s^*\!: \g^*\to \g^*$ be the dual map, i.e., 
$\varphi_s^*|_{\g_0^*}=\id$, $\varphi_s^*|_{\g_1^*}=s^{-1}{\cdot}\id$. 
For any $F\in \gS(\g)$, we have $\varphi_s(\textsl{d}_y F)=\textsl{d}_{\varphi_s^*(y)} \varphi_s(F)$. 
In particular, $$\textsl{d}_y \varphi^{-1}_s(H_i)=\varphi_s^{-1}(\textsl{d}_{\varphi_s^*(y)} H_i),$$
where $\varphi_s^*(y)=x+s^{-1}x'$. 
If $s$ tends to $\infty$, then $s^{-1}$ tends to $0$. It remains to notice, that
for almost all $s$, the element $x+sx'$ is regular and then
$\g^{x+sx'}$ is the linear span of $\{\textsl{d}_{x+sx'} H_j\}_{j=1}^l$, see Eq.~\eqref{eq:ko-re-cr}.  

{\sf (iii)} \ The hypothesis that $r_0$ is not onto excludes the pairs $(\gt h\oplus\gt h,\gt h)$ and
$ (\gt{sl}_{2k+ 1},\gt{so}_{2k+ 1})$. Hence $\g$ is simple and, by Lemma~\ref{subreg-1}, 
 $\textsl{d}_x H_1,\dots, \textsl{d}_x H_{l-1}$ are linearly independent,   
$\textsl{d}_x H_l$ is a linear combination of $\textsl{d}_x H_j$ with $j<l$, and $\sigma(H_l)=H_l$. Since
$x$ is semisimple and subregular, $\g^x=\z(\g^x)\oplus \tri$ and $\dim\z(\g^x)=l-1$. Hence
$\z(\g^x)=\langle \textsl{d}_x H_i \mid i<l\rangle_{\bbk}$.  

Take $j<l$ and set $m_j=d_j-1$.  Then by Eq.~\eqref{diff-versus-partial} and by  
Eq.~\eqref{eq:partial} with $k=m_j$, we have
\begin{gather*}
   (m_j)!\,\textsl{d}_{x+sx'} H_j = \partial^{m_j}_{x+sx'} H_j=
   \sum_{i=0}^{m_j} \binom{m_j}{i} s^i \partial_{x'}^i \partial^{m_j- i}_x  H_j , \\
   (m_j)!\,\sigma(\textsl{d}_{x+sx'} H_j)
   = \partial^{m_j}_{x-sx'}\sigma(H_j)=
   \sum_{i=0}^{m_j} \binom{m_j}{i} (-s)^i \partial_{x'}^i \partial^{m_j- i}_x \sigma(H_j) .
\end{gather*}
It follows that $\partial_{x'}^i \partial_x^{m_j-i}H_j\in\g_0$ if and only if either $i$ is even and 
$\sigma(H_j)=H_j$ or $i$ is odd and $\sigma(H_j)=-H_j$. Therefore, 

\textbullet \quad if
$\sigma(H_j)=H_j$, then  
$\lim_{s\to 0} \varphi_s(\textsl{d}_{x{+}sx'} H_j) = \textsl{d}_{x} H_j\in \g_0$; while 

\textbullet \quad if $\sigma(H_j)=-H_j$, then $\textsl{d}_{x} H_j\in \g_1$ and 
\begin{multline*}     
(m_j)!\,\varphi_s(\textsl{d}_{x{+}sx'} H_j)=s (\partial^{m_j}_x H_j+m_j\partial_{x'} \partial^{m_j-1}_x H_j) +   \text{(terms of degree $\ge 2$ w.r.t.~$s$)} \\
=s \bigl( (m_j)!\, \textsl{d}_{x} H_j+ m_j\partial_{x'} \partial^{m_j-1}_x H_j\bigr) + \dots
\end{multline*} 
Thus, if $\sigma(H_j)=-H_j$, then  
\beq  \label{eq:sigma-minus}
\lim_{s\to 0} \langle \vp_s(\textsl{d}_{x{+}sx'} H_j)\rangle_\bbk=
\langle \textsl{d}_{x} H_j+\frac{1}{(m_j{-}1)!} {\cdot} \partial_{x'} \partial^{d_j{-}2}_x H_j\rangle_\bbk\,.
\eeq
Note that here $\partial_{x'} \partial^{d_j{-}2}_x H_j\ \in \g_0$. 
Write $\z(\g^x)=\z(\g^x)_0 \oplus \z(\g^x)_1$, where 
$\z(\g^x)_i = \z(\g^x)\cap \g_i$. Then 
$\z(\g^x)_1=\langle \textsl{d}_x H_j \mid \sigma(H_j)=-H_j\rangle_{\bbk}$ and 
$\z(\g^x)_0=\langle \textsl{d}_x H_j \mid \sigma(H_j)=H_j, \ j\ne l\rangle_{\bbk}$. Hence
$\dim\z(\g^x)_0=\rk\g_0-1$ and $\dim\z(\g^x)_1=\rk\g-\rk\g_0$.

Let ${\bf p}_1$ denote the projection $\g\to \g_1$ along $\g_0$. Then ${\bf p}_1(\BV)=
\BV /(\BV\cap \g_0)$ and 
our goal is to compute $\dim{\bf p}_1(\BV)$. By Eq.~\eqref{eq:sigma-minus}, we have
$\z(\g^x)_1 \subset {\bf p}_1(\BV)$. 

For our further argument, some properties of $\textsl{d}_{x} H_l\in \g_0^x$ are needed. It would be 
nice to have $\textsl{d}_{x} H_l=0$ for $x$ as in {\sf (i)}. Since this is not always the case, we need 
a trick.     

Let $\tilde{\g}=\g\oplus\ce$ be the central extension of $\g$, where $\dim\ce=1$. We extend the 
$\BZ_2$-grading to $\tilde\g$ so that  $\ce\subset\tilde{\g}_0$ and 
 $\varphi_s$ to $\tilde{\g}$ by letting $\varphi_s\vert_\ce=\id$. 
Take non-zero $z\in \ce$ and $\gamma\in\ce^*$.
Note that $\tilde{\g}^{y{+}\gamma}=\tilde{\g}^y$ 
for any $y\in\g^*$.   
Therefore $\BV\oplus\ce = \lim\limits_{s\to 0} \varphi_s( \tilde{\g}^{x+ \gamma+ sx'})$. 
Set $\bx =x+\gamma$. Then $\bx\in \tilde\g^*$ is still subregular and $z(\bx )\ne 0$. 
Clearly, there is a linear combination 
\[
  {\bf H}_l=H_l + c_{l{-}1} z^{d_l-d_{l-1}} H_{l{-}1} + \ldots + c_j z^{d_l{-}d_j} H_j + \ldots + c_0 z^{d_l}
\]
with $c_i\in\bbk$ such that 
$\partial_{\bx} ^{d_l{-}1} {\bf H}_l=\textsl{d}_{\bx} {\bf H}_l=0$. Note that $z, H_1,\dots,H_{l-1}, {\bf H}_l$
freely generate ${\cz}\gS(\tilde{\g})$.

Let ${\ca}_{\bx}$ be the Mishchenko--Fomenko subalgebra of $\gS(\tilde\g)$ associated with $\bx$.
By definition, ${\ca}_{\bx}$ is generated by 
\beq     \label{eq:gener_A-zeta}
  \{z, \ \partial_{\bx}^k H_j \ (j<l,\ 0\le k\le m_j), \ \partial_{\bx}^k {\bf H}_l \ (0\le k\le m_l-1)\}.
\eeq
As the total number of these generators is $\bb(\g)$ and $\trdeg\,{\ca}_{\bx}=\bb(\tilde\g)-1=\bb(\g)$~\cite[Lemma~2.1]{m-y}, we see that ${\ca}_{\bx}$
is freely generated by them. Note that the set in~\eqref{eq:gener_A-zeta} contains a basis for
the $l$-dimensional space $\z(\g^x)\oplus\ce=\z(\tilde\g^{\bx})$. Therefore, 
$F=\partial_{\bx}^{m_l{-}1} {\bf H}_l$ does not lie in $\gS^2\bigl(\z(\g^x)\oplus\ce\bigr)$. 
Since $\partial_{\bx}^{m_l} {\bf H}_l=0$, the polynomial $F$  is a $\tilde\g^{\bx}$-invariant in 
$\gS^2(\tilde\g^{\bx})$~\cite[Lemma~1.5]{m-y}. It is clear that $\sigma(F)=F$ and therefore 
$F\in\gS^2(\tilde\g_0)\oplus\gS^2(\g_1)$. Now $\tilde\g^{\bx}=\ce\oplus\g^x=\ce\oplus\z(\g^x) \oplus \gt{sl}_2$. There is a standard basis $\{e,h,f\}$ of this $\gt{sl}_2$ such that 
$e,f\in\g_1$ (cf. Example~\ref{ex:sl2}) and $F\in (4ef+h^2) + \gS^2(\gt z(\g^x)\oplus \ce)$.

If  $x'\in\g_1^*$ is generic enough, then $\partial_{x'}F=\eta+\xi$, 
where $\eta\in\gt z(\g^x)_1$ and $\xi$ is a non-zero element in  
$\left<e,f\right>_{\bbk}\subset\g_1$. Note that in this case 
$(m_l{-}1)!\,\textsl{d}_{{\bx}  + sx'} {\bf H}_l$ lies in $ s\partial_{x'}F+s^2\tilde{\g}$. 
Further,
\[
    (m_l-1)!  \varphi_s(\textsl{d}_{{\bx}  + sx'} {\bf H}_l)= s^2(\eta+\xi) + 
     \frac{m_l-1}{2} s^2 \partial^2_{x'} \partial_{{\bx}}^{d_l-3} {\bf H}_l 
     + \text{(terms of degree $\ge 3$ w.r.t.~$s$)}.
\]
Here $\partial^2_{x'} \partial_{{\bx}}^{d_l-3} {\bf H}_l \in\g_0$. 
Hence ${\bf p}_1(\BV)=\gt z(\g^x)_1 + \bbk(\eta+\xi)=\gt z(\g^x)_1\oplus\bbk\xi$. 
The desired equality $\dim {\bf p}_1(\BV)=\rk\g -\rk\g_0+1$ follows. 
\end{proof}

\begin{lm}            \label{subreg-rk}
Let $y=x+x'$ be as in Lemma~\ref{subreg} with $x'$ generic. 
Then the rank of the restriction of 
$\pi_0(y)$ to $\ker\pi_\infty(y)=\g_0\oplus\g_{1}^x$ is equal to $\dim(\g_0 \oplus \g_{1}^x) -\rk\g$. 
\end{lm}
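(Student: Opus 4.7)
The task is to compute $\rk(\pi_0(y)|_K)$ with $K:=\ker\pi_\infty(y)=\g_0\oplus\g_1^x$, and my plan is to sandwich this rank between matching bounds—this amounts to verifying hypothesis~(2) of Lemma~\ref{lm-sum-reg} for our specific $y=x+x'$. First I check hypothesis~(1) of that lemma, namely $\dim\ker\pi_t(y)=\rk\g$ for all $t\in\bbk$. For $t\ne 0,\infty$, the isomorphism $\vp_s$ (with $s^2=t$) reduces this to regularity of $\vp_s^*(y)=x+s^{-1}x'$ in $\g^*$; since $y\in\g^*_{\sf reg}$ by Lemma~\ref{subreg}(i) and $\g^*_{\sf sing}$ has codimension~$3$, for generic $x'$ the affine line $\{x+\tau x'\mid\tau\in\bbk\}$ through the subregular point $x$ meets $\g^*_{\sf sing}$ only at $\tau=0$, so $x+s^{-1}x'\in\g^*_{\sf reg}$ for every $s\ne\infty$. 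For $t=0$, the codim-$2$ property of $\g_{(0)}$ from Lemma~\ref{d-easy}, after a further restriction of $x'$ if needed, gives $y\in\g^*_{(0),\sf reg}$.

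For the upper bound on the rank, I apply Lemma~\ref{restr} of the Appendix to the pencil $\pi_t(y)=\pi_0(y)+t\pi_\infty(y)$, whose generic kernel has dimension $\rk\g$ by the step above; this yields
\[
\rk(\pi_0(y)|_K)\le\dim K-\rk\g,\qquad\text{equivalently,}\qquad\dim\ker(\pi_0(y)|_K)\ge\rk\g.
\]

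For the matching lower bound, I use $\BV=\lim_{s\to 0}\vp_s(\g^{x+sx'})$ from Lemma~\ref{subreg}(ii), which has dimension $\rk\g$ and lies in $K$. The inclusion $\BV\subset\ker(\pi_0(y)|_K)$ holds because for $F\in\cz_t$ and $w\in K=\ker\pi_\infty(y)$,
\[
\pi_0(y)(\textsl{d}_y F,w)=\pi_t(y)(\textsl{d}_y F,w)-t\,\pi_\infty(y)(\textsl{d}_y F,w)=0
\]
(both terms vanish: $F$ is $\pi_t$-central and $w$ lies in the radical of $\pi_\infty(y)$), and one passes to the limit $t\to\infty$. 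To close the sandwich I need the reverse inclusion $\ker(\pi_0(y)|_K)\subset\BV$, yielding $\dim\ker(\pi_0(y)|_K)\le\rk\g$: any $w\in\ker(\pi_0(y)|_K)$ admits an order-by-order lift $w_t\in\ker\pi_t(y)$ with $\lim_{t\to\infty}w_t=w$, because the first-order obstruction $-\pi_0(y)(w,\cdot)\in\g^*$ vanishes on $K$ by assumption and therefore lies in the image of $\pi_\infty(y)\!:\g\to\g^*$; genuine convergence of this lift to an element of the Grassmannian limit $\lim K_t=\BV$ follows from the Appendix Theorem~\ref{sum-dim} controlling $\dim\sum_{t\in\bbk}\ker\pi_t(y)$ under the regularity ensured by the first step.

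The principal obstacle is this reverse inclusion $\ker(\pi_0(y)|_K)\subset\BV$: the first-order obstruction check is immediate, but upgrading it to a genuine element of the Grassmannian limit requires the full pencil-theoretic machinery of the Appendix (Lemma~\ref{open}, Theorem~\ref{sum-dim}), since higher-order obstructions must also be absorbed using the remaining freedom in the formal lift. Both Step~2 and the inclusion $\BV\subset\ker(\pi_0(y)|_K)$ supply the same upper bound on rank; the genuinely new input needed to force equality is this dimension-counting argument via the Appendix.
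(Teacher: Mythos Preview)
Your upper bound via Lemma~\ref{restr} is correct and matches the paper's argument. The gap is in the lower bound. You correctly observe that $\BV\subset\ker(\pi_0(y)|_K)$, but this inclusion again gives only $\dim\ker(\pi_0(y)|_K)\ge\rk\g$, i.e., the \emph{same} upper bound on the rank. What you need is the reverse inequality $\dim\ker(\pi_0(y)|_K)\le\rk\g$, and your argument for it is circular.

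Specifically, your appeal to Theorem~\ref{sum-dim} does not work: that theorem \emph{assumes} the rank condition $\rk(A|_U)=\dim U-\dim V+m$ as a hypothesis and deduces the dimension formula for $L$ from it. Here that hypothesis is precisely the statement of the lemma you are trying to prove. In the language of the Jordan--Kronecker decomposition used in the proof of Theorem~\ref{sum-dim}, the inclusion $\ker(\pi_0(y)|_K)\subset\BV$ is equivalent to all Jordan blocks of the pencil at $y$ (with eigenvalue at $t=\infty$) having size exactly~$2$; blocks of size $\ge 4$ would contribute extra elements to $\ker(\pi_0(y)|_K)$ that are \emph{not} limits of $\ker\pi_t(y)$. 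Your ``order-by-order lift'' argument cannot rule this out: the first-order obstruction indeed vanishes, but the higher-order obstructions do not vanish automatically, and there is no machinery in the Appendix that forces them to without already knowing the rank.

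The paper's proof supplies the missing lower bound by an explicit computation: it chooses a \emph{specific}, non-generic $x'=\xi_\alpha-\xi_{-\alpha}$ built from root vectors, writes down the block structure of $\pi_0(y)|_U$ relative to a decomposition $U=\gt m\oplus\te_0\oplus(\g_\alpha\oplus\g_{-\alpha})\oplus\z$ adapted to a maximal torus, and reads off that the rank is at least $\dim U-\rk\g$ for this particular $x'$. Semicontinuity then gives the same lower bound for generic $x'$. You will need some comparably concrete input about the structure of $x$ (it is semisimple and subregular, so $\g^x=\z(\g^x)\oplus\tri$) to produce the lower bound.
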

\begin{proof}
Set $U=\ker \pi_{\infty}(x)=\g_0\oplus\g_{1}^x$. 
Consider the maximal torus $\gt t=\g_{0}^x+\bbk^{l-1}$, where $\bbk^{l{-}1}$ is the centre of $\g^x$. 
The intersection $\gt t_0=\g_0\cap\gt t=\g_{0}^x$ is a Cartan subalgebra of $\g_0$. Further, 
$\g_0=\te_0 \oplus \gt m$, where $\gt m$ is the $\te_0$-stable complement of $\gt t_0$ in $\g_0$. 
The torus $\gt t$ defines a finer decomposition of $U$, namely 
\[
   U=\gt m \oplus \te_0 \oplus (\g_\alpha \oplus \g_{-\alpha}) \oplus \z 
\]
where $\g_{\pm\alpha}$ are root spaces and $\z\simeq \bbk^{l{-}\rk\g_0}$. 

Choose a very particular $x'$, namely as  $x'=\xi_\alpha-\xi_{-\alpha}$ 
with non-zero root vectors $\xi_\alpha\in\g_{\alpha}$, $\xi_{-\alpha}\in\g_{-\alpha}$ under the usual identification $\g_1\simeq \g_1^*$.  Then the matrix of $(\pi_0(y)|_{U})$ with respect to a basis for $U$
adapted to the above finer decomposition
has a block form with easy to understand blocks (see Fig.~\ref{fig:block}):
\begin{itemize}
\item \ $\pi_0(y)$ is non-degenerate on $\gt m$;
\item \ $\pi_0(y)(\gt m,\gt t_0 \oplus \g_\alpha \oplus \g_{-\alpha})=0$;
\item \ $\pi_0(y)(\gt t_0,\g_\alpha \oplus \g_{-\alpha})\ne 0$.
\end{itemize}
\begin{center}
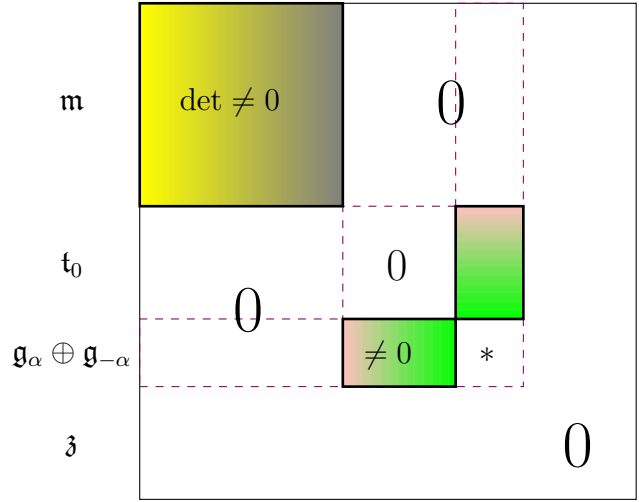
\begin{figure}[htbp]
\begin{tikzpicture}[scale= .6]
\draw (0,0)  rectangle (11,11);

\draw[dashed,redi]  (0,2.5) -- (8.5,2.5) -- (8.5,11) -- (7,11) -- (7,4) -- (0,4)--cycle;
\draw[dashed,redi]  (4.5,2.5) -- (4.5,6.5) -- (8.5,6.5);

\shade[left color=yellow,right color=gray, draw,line width=1pt] (0,6.5) -- (4.5,6.5) -- (4.5,11) -- (0,11)--cycle ;
\shade[left color=pink,right color=green, draw,line width=1pt] (4.5,2.5) -- (7,2.5) -- (7,4) -- (4.5,4)--cycle ;
\shade[top color=pink,bottom color=green, draw,line width=1pt] (8.5,6.5) -- (7,6.5) -- (7,4) -- (8.5,4)--cycle ;

\draw (2,8.8)  node {$\det\ne 0$} ;
\draw (5.5,3.2)  node {$\ne 0$} ;
\draw (-1.5,8.8)  node {$\mathfrak m$} ;
\draw (-1.5,5.2)  node {$\mathfrak t_0$} ;
\draw (-1.5,3.2)  node {$\mathfrak g_\ap\oplus \mathfrak g_{-\ap}$} ;
\draw (-1.5,1.2)  node {$\mathfrak z$} ;
\draw (5.7,5.2)  node {{\Large $0$}} ;
\draw (7.7,3.2)  node {$\ast$} ;
\draw (2.4,4.2)  node {{\Huge ${0}$}} ;
\draw (6.9,8.8)  node {{\Huge ${0}$}} ;
\draw (9.7,1.2)  node {{\Huge ${0}$}} ;
\end{tikzpicture}
\caption{The block structure of $\pi_0(y)\vert_U$}   \label{fig:block} 
\end{figure}
\end{center}

This is enough to see that the rank of $\pi_0(y)$ on $\g_0 \oplus \g_\alpha \oplus \g_{-\alpha}$ 
is at least $\dim\g_0-\rk\g_0+2$. Hence $\rk(\pi_0(y)|_{U})\ge \dim U- \rk\g$. 
This happens for one, not exactly generic $x'$, however, the generic value cannot be smaller and 
it also cannot be larger by Lemma~\ref{restr}. 
\end{proof}

The algebra $\gS(\g_0)^{\g_0}$ is contained in the Poisson centre of 
$\gS(\g)^{\g_0}$. Let $\tilde\gZ$ be the (Poisson-commutative) subalgebra of $\gS(\g)$ generated by $\gZ$ and 
$\gS(\g_0)^{\g_0}$. If $H_1,\ldots,H_l$ is a {\sf g.g.s.} for $(\g,\g_0)$ such that $\sigma(H_i)=\pm H_i$ 
for each $i$, then $\tilde\gZ$ is freely generated by $(H_j)_{(i,d_j-i)}$ with $i\ne d_j$ and a set of basic 
invariants $\tilde H_1,\ldots,\tilde H_{\rk\g_0}\in\gS(\g_0)^{\g_0}$. In other words,  a set of basic invariants of $\tilde\gZ$ is obtained from that of $\gZ$ if one replaces the generators of $r_0(\gS(\g)^\g)$ with the free generators of 
$\gS(\g_0)^{\g_0}$. [Recall that $(H_j)_{(d_j,0)}\ne 0$ if and only if $\esi_j=1$ and there are $\rk\g_0$ such indices $j$, see Remark~\ref{rem:sigma-inner}.]
 
\begin{thm}      \label{thm:maxim2}
{\sf (i)} The differentials of the algebraically independent generators of $\tilde\gZ$, chosen among 
$\{(H_j)_{(i,d_j-i)}\}$ and $\{\tilde H_j\}$, as above, are linearly independent on a big open subset of $\g^*$. 
\par 
{\sf (ii)} The algebra $\tilde\gZ$ is a maximal Poisson-commutative subalgebra of\/ $\gS(\g)^{\g_0}$.
\end{thm}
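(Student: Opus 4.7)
The plan is to establish (i) by a stratified codimension analysis of the locus where the differentials of our chosen generators are linearly dependent, and then to deduce (ii) from Theorem~\ref{ppy-max} together with the transcendence-degree bound of Lemma~\ref{lm:kosik}. That the $N:=\tfrac12(\dim\g_1+\rk\g+\rk\g_0)$ proposed elements (the nonzero $(H_j)_{(i,d_j-i)}$ with $i\ne d_j$, together with the basic invariants $\tilde H_1,\ldots,\tilde H_{\rk\g_0}$ of $\gS(\g_0)^{\g_0}$) are algebraically independent and generate $\tilde\gZ$ is straightforward: by Theorem~\ref{free-main} the bi-homogeneous components freely generate $\gZ$, while $(H_j)_{(d_j,0)}=r_0(H_j)$ lies in $\bbk[\tilde H_1,\ldots,\tilde H_{\rk\g_0}]$; the count gives $N$ elements, and $\trdeg\tilde\gZ\le N$ by Lemma~\ref{lm:kosik} forces algebraic independence.

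For (i), the case when $r_0$ is onto reduces to Theorem~\ref{thm:maxim1}(1), since then $\tilde\gZ=\gZ$. Assume henceforth that $r_0$ is not onto. Stratify $\g^*$ by $\xi_0:=\xi|_{\g_0}$. The locus $(\g_0^*)_{\sf sing}\times\g_1^*$ has codimension $\ge 3$ in $\g^*$ by the \textsf{codim}-$3$ property of the semisimple $\g_0$. If $\xi_0\in(\g_0^*)_{\sf reg}$ and $\xi\notin\g^*_{\infty,\sf sing}$, then Lemma~\ref{lm-sum-reg} gives $\dim\textsl{d}_\xi\gZ=N$, whence $\dim\textsl{d}_\xi\tilde\gZ=N$ since $\tilde\gZ\supseteq\gZ$. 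The remaining case is $\xi_0\in(\g_0^*)_{\sf reg}$ and $\xi\in D$ for some irreducible divisor $D\subset\g^*_{\infty,\sf sing}$. By Lemma~\ref{lm-div}, $D=D_0\times\g_1^*$ with $\xi_0\in D_0$ regular in $\g_0$ and subregular in $\g$, and $\rk\pi_\infty(\xi)=\rk\pi_\infty-2$. By Lemma~\ref{subreg-rk}, the hypotheses of Lemma~\ref{lm-sum-reg} remain valid at a generic point $y=x+x'\in D$ of the form produced by Lemma~\ref{subreg}(i), giving $\dim\textsl{d}_y\gZ=N-1$ and $\textsl{d}_y\gZ\supseteq\BV$ with $\dim(\BV\cap\g_0)=\rk\g_0-1$.

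At such a $y$, the differentials $\{\textsl{d}_y\tilde H_k\}$ span the Cartan $\g_0^{\xi_0}=\te_0$, of dimension $\rk\g_0$. It suffices to show $\g_0^{\xi_0}\not\subseteq\textsl{d}_y\gZ$, i.e., that the Cartan element $h\in\te_0$ of the $\tri$-summand of $\g^{\xi_0}$ is not in $\textsl{d}_y\gZ$. Granted this,
\[
\dim\textsl{d}_y\tilde\gZ = \dim\textsl{d}_y\gZ + \rk\g_0 - \dim(\textsl{d}_y\gZ\cap\te_0) = (N-1) + \rk\g_0 - (\rk\g_0-1) = N.
\]
Proving the exclusion $h\notin\textsl{d}_y\gZ$ is the main obstacle. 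The plan is a $\sigma$-equivariant refinement of the expansion in the proof of Lemma~\ref{subreg}: writing $\vp_s^{-1}(\textsl{d}_{x+s^{-1}x'}H_j)$ as a power series in $s^{-1}$, the $\sigma$-parity of each $H_j$ forces the $\g_0$-component of every contribution to lie in $\z(\g^{\xi_0})\cap\g_0=\BV\cap\g_0$; the computation for the top invariant $H_l$ uses the central extension trick from Lemma~\ref{subreg} together with $\sigma(H_l)=H_l$, which holds by Lemma~\ref{subreg-1} because the exceptional pair $(\gt{sl}_{2k+1},\gt{so}_{2k+1})$ has $r_0$ onto and is thus excluded.

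For (ii), (i) and Theorem~\ref{ppy-max} together imply that $\tilde\gZ$ is algebraically closed in $\gS(\g)$. Any Poisson-commutative subalgebra $A\subset\gS(\g)^{\g_0}$ containing $\tilde\gZ$ satisfies $\trdeg A\le N=\trdeg\tilde\gZ$ by Lemma~\ref{lm:kosik}, so $A$ is algebraic over $\tilde\gZ$; algebraic closedness then forces $A=\tilde\gZ$, proving maximality.
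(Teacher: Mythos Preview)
Your approach is essentially the paper's: reduce to a divisor $D\subset\g^*_{\infty,\sf sing}$ via Lemma~\ref{lm-dif-z}, and at a generic $y=x+x'\in D$ combine Lemmas~\ref{lm-div}, \ref{subreg}, \ref{subreg-rk} with the pencil machinery (Lemma~\ref{lm-sum-reg}/Theorem~\ref{sum-dim}) to count $\dim\textsl{d}_y\tilde\gZ$. Part {\sf(ii)} via Theorem~\ref{ppy-max} and Lemma~\ref{lm:kosik} is exactly what the paper does.

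Where you diverge is in flagging ``$h\notin\textsl{d}_y\gZ$'' as a remaining obstacle requiring a further $\sigma$-equivariant expansion. It is not: it drops out of what you have already cited. Lemma~\ref{lm-sum-reg} (via Theorem~\ref{sum-dim}) gives not only $\dim\textsl{d}_y\gZ=N-1$ but also $\dim\bigl(\textsl{d}_y\gZ\cap\ker\pi_\infty(y)\bigr)=\rk\g$. Since $\BV\subset\textsl{d}_y\gZ\cap\ker\pi_\infty(y)$ and $\dim\BV=\rk\g$ (it is a limit of $\rk\g$-dimensional stabilisers), these two subspaces coincide. As $\g_0\subset\ker\pi_\infty(y)$, this forces
\[
\textsl{d}_y\gZ\cap\g_0=\BV\cap\g_0,
\]
which has dimension $\rk\g_0-1$ by Lemma~\ref{subreg}{\sf(iii)}. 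Hence $\te_0\not\subset\textsl{d}_y\gZ$ immediately, and your displayed count goes through. The paper runs the same computation dually, bounding $\dim\bigl(\textsl{d}_y\gZ/\textsl{d}_y\gZ\cap\g_0\bigr)$ from below by $\tfrac12\rk\pi_\infty+\rk\g-\rk\g_0$ and then adding the $\rk\g_0$ independent differentials $\textsl{d}_x\tilde H_j\in\g_0$; but the content is identical.

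Two minor corrections: $\g_0$ is reductive, not necessarily semisimple (though the {\sl codim}--$3$ estimate still holds); and in your ``case $\xi_0$ regular, $\xi\notin\g^*_{\infty,\sf sing}$'' you invoke Lemma~\ref{lm-sum-reg} without checking hypothesis {\bf(1)} --- this is supplied by Lemma~\ref{d-easy}{\sf(i)}, which shows $\bigcup_{t\ne\infty}\g^*_{(t),\sf sing}$ meets any divisor in codimension~$\ge 1$.
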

\begin{proof}
{\sf (i)} Assume that the differentials of the chosen algebraically independent generators of $\tilde\gZ$ 
are linearly dependent at each point $y$ of an irreducible divisor $D\subset\g^*$. Since 
$\gZ\subset \tilde\gZ$, the same holds for $\textsl{d} (H_j)_{(i,d_j-i)}$. Then 
$D\subset\g^*_{\infty,\sf sing}$ by Lemma~\ref{lm-dif-z} and $D=D_0\times \g_1^*$ by Lemma~\ref{lm-div}{\sf (i)}.  Let $y=x+x'$ be a generic element of $D$. 

Recall that $\textsl{d}_y\gZ$ stands for the linear span of $\textsl{d}_y F$ with $F\in\gZ$. 
We have $$\textsl{d}_y\gZ=\sum_{t\ne \infty} \textsl{d}_y \cz_t.$$ 
According to Lemma~\ref{d-easy}, $y\in \g^*_{(t),\sf reg}$ for each $t\ne \infty$. 
Hence $\textsl{d}_y \cz_t=\ker\pi_t(y)$ whenever $t\ne \infty$. 
By Lemma~\ref{lm-div}{\sf (iii)}, $\rk\pi_{\infty}(y)=\rk\pi_{\infty}-2$. 
Combining Lemmas~\ref{lm-div}{\sf (ii)},  \ref{subreg}{\sf (i)}, and  \ref{subreg-rk}, 
we see that the   rank of the restriction of 
$\pi_0(y)$ to $\ker\pi_\infty(y)$ is equal to $\dim\ker\pi_\infty(y) -\rk\g$. 
Now Theorem~\ref{sum-dim} applies and asserts that 
\beq    \label{eq:half}
    \dim (\textsl{d}_y\gZ/ \textsl{d}_y\gZ \cap \ker \pi_\infty(y)) = \frac{1}{2} \rk \pi_{\infty}(y)=\frac{1}{2}\rk \pi_{\infty} -1. 
\eeq 
By construction, $\BV \subset \textsl{d}_y \gZ \cap \ker \pi_{\infty}(y)$. Recall that 
$\ker\pi_\infty(y)=\g_0\oplus\g_1^x$. In view of \eqref{eq:half} and 
 Lemma~\ref{subreg}{\sf (iii)}, we have 
$$\dim(\textsl{d}_y \gZ /\textsl{d}_y\gZ \cap \g_0) \ge
\frac{1}{2}\rk \pi_{\infty} -1 + (\rk\g-\rk\g_0+1) = 
  \frac{1}{2}\rk \pi_{\infty} + \rk\g-\rk\g_0.$$  
The differentials  $\{\textsl{d}_x\tilde H_j\mid j=1,\dots,\rk\g_0\}$ are linearly independent and lie in 
$\g_0$. 
Hence 
$$
\dim\textsl{d}_y \gZ\ge  \frac{1}{2}\rk \pi_{\infty} + \rk\g-\rk\g_0 + \rk\g_0 = \trdeg \gZ.
$$
Now we see that the differentials of all the generators of $\tilde\gZ$ are linearly independent 
at $y$. A contradiction! 

Part {\sf (ii)}  follows from {\sf(i)} and Theorem~\ref{ppy-max}.  
\end{proof}

\noindent
{\bf Remark.} In the jargon of completely integrable systems, which is used e.g. in~\cite{mf, bols},
Eq.~\eqref{eq:half} means that the restriction of $\gZ$ to the symplectic leaf of $\{\,\,,\,\}_{\infty}$ at $y$ is a ``complete family in involution".

\section{Fancy identities for Poisson tensors}    \label{fancy}
\label{sect:fancy}

In this section, the existence of a {\sf g.g.s.} is of no importance, any indecomposable symmetric pair
$(\g,\g_0)$ is admitted. 

Let $\omega$ be the standard $n$-form on $\g^*$, where $n=\dim\g$, and let $\pi$ be the Poisson 
tensor (bivector) of the Lie--Poisson bracket on $\g^*$, see Section~\ref{subs:P-tensor}.
Having a basis $\{e_1,\ldots,e_n\}$ for $\g$, one can write 
$$
\pi=\sum_{i<j} [e_i,e_j]\otimes \partial_i \wedge \partial_j, \ \text{ where } \ \partial_i=\partial_{e_i}.
$$
For simplicity, we identify $\cW^1$ with $\gS(\g)\otimes \g^*$ and $\partial_i$ with $e_i^*$, 
where $e_i^*$ are the elements of the dual basis $\{e_1^*,\ldots,e_n^*\}\subset \g^*$.  
We also identify $de_i$ with $e_i$ and therefore $\Omega^1$ with $\gS(\g)\otimes \g$.

For any $k>0$, set
$$
\bigwedge\!^k\pi=\,\underbrace{\pi\wedge\pi\wedge\ldots\wedge \pi}_{k \ \scriptstyle{\mathrm{factors}}}
$$
and regard it as
an element of $\gS^k(\g)\otimes\bigwedge^{2k}\g^*$.
Then $\bigwedge^{(n-l)/2}\pi\ne 0$ and all higher
exterior powers of $\pi$ are zero.  There is a formula describing $\bigwedge^{(n-l)/2}\pi$ 
in terms of the Poisson centre of $\gS(\g)$. 
Applying  the map $\vp_s^{-1}$, one 
obtains a similar formula for $\vp_s^{-1}(\pi)$, which is the Poisson tensor of $\{\,\,,\,\}_s$, 
in terms of the Poisson centre 
of $(\gS(\g),\{\,\,,\,\}_s)$.  The main idea of \cite{contr} was to consider the minimal $s$-components
of both sides. Here we consider the maximal $s$-components and obtain interesting new identities.

By definition, $\textsl{d}F \in \Omega^1$  for each $F\in\gS(\g)$. 
Take $H_1,\ldots,H_l\in\gS(\g)^{\g}$.  Then
$$
\textsl{d}H_1\wedge\ldots\wedge \textsl{d}H_l\in  \gS(\g)\otimes\bigwedge\!^l \g.
$$
At the same time, $\bigwedge^{(n-l)/2}\pi\in\gS(\g) \otimes\bigwedge^{n-l}\g^*$.
The volume form  $\omega$
defines a non-degenerate pairing between $\bigwedge^l\g$ and
$\bigwedge^{n-l}\g$. If $u\in \bigwedge^l\g$ and $v\in \bigwedge^{n-l}\g$,
then $u\wedge v=c\,\omega$ with $c\in\bbk$. We write this as
$\dfrac{u\wedge v}{\omega}=c$ and let $\dfrac{u}{\omega}$ be the element of
$(\bigwedge^{n-l}\g)^*$ such that $\dfrac{u}{\omega}(v)=\dfrac{u\wedge v}{\omega}$.
For any ${\bf u}\in \gS(\g){\otimes} \bigwedge^l\g$,
we let $\dfrac{\bf u}{\omega}$ be the corresponding element of
\[
\gS(\g)\otimes\left({\bigwedge}\!^{n-l}\g\right)^*\cong \gS(\g)\otimes{\bigwedge}\!^{n-l}\g^*.
\]
There is a Poisson interpretation of the Kostant regularity criterion \cite[Theorem\,9]{ko63}, see also
Eq.~\eqref{eq:ko-re-cr},
the so-called {\it Kostant identity} (see~\cite{contr}):
\[
    \frac{\textsl{d} H_1\wedge\dots \wedge \textsl{d} H_l}{\omega}={\bigwedge}\!^{(n-l)/2} \pi .
\]
The identity holds if the basic invariants are normalised correctly. It still holds if we apply $\vp_s^{-1}$ 
to both sides. 

Suppose that $\sigma$ is outer and  $\sigma(H_j)=-H_j$. Then 
\[  
  \textsl{d} (H_{j})_{(d_j {-}1,1)}\in \underbrace{\gS^{d_j{-}1}(\g_0)\otimes {\bigwedge}\!^{1}\g_1}_{I} 
  \oplus \underbrace{\g_1\gS^{d_j{-}2}(\g_0)\otimes{\bigwedge}\!^{1}\g_0}_{II}.
\]
Let $\textsl{d}H_{j}^{[1]}$ stand for the component of the first type. This is a $1$-form on $\g^*$.
Suppose that $\sigma(H_i)=H_i$ for $i\le k$ and $\sigma(H_i)=-H_i$ for $i > k$. 
Then $k=\rk\g_0$ here, cf. Lemma~\ref{lm:outer-inv}. 

Let $\pi_{\g_0}$ denote the Poisson tensor of $\g_0$. Since  $\g_0$ is reductive,
${\bigwedge}\!^{(\dim\g_0-\rk\g_0)/2} \pi_{\g_0}$ is non-zero on the big open subset $(\g^*_0)_{\sf reg}$.

\begin{prop}        \label{lm-dif-K} 
If $\sigma$ is an inner involution, then 
\beq   \label{K-inner}
 \frac{\textsl{d} (H_1)_{(d_1,0)}\wedge\dots \wedge \textsl{d}( H_l)_{(d_l,0)}}{\omega} =
  {\bigwedge}\!^{(\dim\g_1)/2}\pi_{\infty} \otimes {\bigwedge}\!^{(\dim\gt  g_0-l)/2} \pi_{\g_0}.
\eeq
If $\sigma$ is an outer involution, then 
\beq      \label{K-outer}
\begin{array}{l}
 \dfrac{\textsl{d} (H_1)_{(d_1,0)}  \wedge \dots \wedge  \textsl{d} (H_k)_{(d_k,0)} \otimes 
  \textsl{d} H_{k+ 1}^{[1]}\wedge  \dots\wedge \textsl{d}H_l^{[1]} }  {\omega} = \\
  \qquad \qquad   \qquad \qquad    \qquad \qquad    \qquad \qquad  =
  {\bigwedge}\!^{(\dim\g_1-l+k)/2}\pi_{\infty} \otimes {\bigwedge}\!^{(\dim\gt  g_0-k)/2} \pi_{\g_0}.
  \end{array}
\eeq
\end{prop}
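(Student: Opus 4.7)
The natural approach is to derive both identities from the Kostant identity itself by applying the automorphism $\varphi_s^{-1}$ and extracting the top-degree component in $s$ from each side; this is precisely the ``maximal $s$-component'' idea flagged in the section's opening paragraph, dual to the ``minimal $s$-component'' strategy used in \cite{contr}. Three preparatory computations are essential: \emph{(a)} $\varphi_s^{-1}(\pi)=\pi_{0,0}+\pi_{0,1}+s^2\pi_{1,1}=\pi_0+s^2\pi_\infty$, the Poisson tensor of $\{\,,\,\}_s$; \emph{(b)} $\varphi_s^{-1}(\omega)=s^{-\dim\g_1}\omega$, since $\omega=\omega_0\wedge\omega_1$ and $\varphi_s$ scales $\omega_1\in\bigwedge^{\dim\g_1}\g_1^*$ by $s^{\dim\g_1}$; and \emph{(c)} $\varphi_s^{-1}(\textsl{d}H_j)=\textsl{d}(\varphi_s^{-1}H_j)=\sum_{k\ge0}s^{-k}\,\textsl{d}(H_j)_{(d_j-k,k)}$. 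Applying $\varphi_s^{-1}$ to both sides of the Kostant identity then yields
\[
s^{\dim\g_1}\,\frac{\bigwedge_{j=1}^l\textsl{d}(\varphi_s^{-1}H_j)}{\omega}={\bigwedge}\!^{(n-l)/2}(\pi_0+s^2\pi_\infty),
\]
a polynomial-in-$s$ identity valued in $\gS(\g)\otimes\bigwedge^{n-l}\g^*$.

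Next, I would read off the top-degree coefficient in $s$ on both sides. By Lemma~\ref{lm:outer-inv}(2), exactly $k:=\rk\g_0$ of the invariants satisfy $\sigma(H_j)=H_j$; the lowest $\g_1$-bidegree of such an $H_j$ is $0$ (contributing $\textsl{d}(H_j)_{(d_j,0)}$ at $s^0$), while the remaining $l-k$ satisfy $\sigma(H_j)=-H_j$ with lowest $\g_1$-bidegree $1$ (contributing $\textsl{d}(H_j)_{(d_j-1,1)}$ at $s^{-1}$). Hence the top $s$-power on the LHS is $s^{\dim\g_1-l+k}$. On the RHS, the binomial expansion of ${\bigwedge}\!^{(n-l)/2}(\pi_0+s^2\pi_\infty)$ has top $s$-power $s^{2r_{\max}}$ with $r_{\max}$ the largest integer for which ${\bigwedge}^{r_{\max}}\pi_\infty\ne0$; by Lemma~\ref{lm:rank-ot-t} the generic rank of $\pi_\infty$ equals $\dim\g_1-l+k$, so $2r_{\max}=\dim\g_1-l+k$, matching the LHS. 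Equating top coefficients (up to the multinomial factor $\binom{(n-l)/2}{r_{\max}}$) gives
\[
\frac{\bigwedge_{j\le k}\textsl{d}(H_j)_{(d_j,0)}\wedge\bigwedge_{j>k}\textsl{d}(H_j)_{(d_j-1,1)}}{\omega}\;\propto\;{\bigwedge}\!^{r_{\max}}\pi_\infty\wedge{\bigwedge}\!^{(\dim\g_0-k)/2}(\pi_{0,0}+\pi_{0,1}).
\]

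Finally, I would refine by isolating the sub-bidegree whose polynomial coefficient lies entirely in $\gS(\g_0)$ (no $\g_1$-factor). On the right, $\pi_{0,1}\in\g_1\otimes\g_0^*\otimes\g_1^*$ carries a $\g_1$-factor in its polynomial part and hence drops out, reducing ${\bigwedge}^{(\dim\g_0-k)/2}(\pi_{0,0}+\pi_{0,1})$ to ${\bigwedge}^{(\dim\g_0-k)/2}\pi_{\g_0}$. On the left, the ``type II'' summand of $\textsl{d}(H_j)_{(d_j-1,1)}$ for $j>k$ likewise has polynomial part in $\g_1\cdot\gS(\g_0)$ and is eliminated, leaving only the ``type I'' summand $\textsl{d}H_j^{[1]}$. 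Specialising $k=l$ yields~\eqref{K-inner} and $k<l$ yields~\eqref{K-outer}. The main obstacle I expect is bookkeeping the multinomial constant $\binom{(n-l)/2}{r_{\max}}$ from the binomial expansion and verifying that it is absorbed by the normalisation convention implicit in the Kostant identity (as alluded to by the phrase ``if the basic invariants are normalised correctly''); checking that no other term in the binomial expansion contributes to the isolated bi-sub-degree, and that ${\bigwedge}^{r_{\max}}\pi_\infty$ is nonzero from the rank computation, is then routine.
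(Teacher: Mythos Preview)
Your approach is correct and is essentially the paper's own, but the paper carries it out in one step rather than three: instead of first passing through $\varphi_s^{-1}$, extracting the top $s$-power, and then refining to the $\gS(\g_0)$-coefficient, the paper directly extracts from both sides of the Kostant identity the bi-homogeneous component whose polynomial coefficient has maximal $\g_0$-degree (equivalently, lies in $\gS^{(n-l)/2}(\g_0)$). On the left this immediately yields the stated wedge of differentials, and on the right the observation that the $\gS(\g_0)$-part of $\pi$ is exactly $\pi_{\g_0}+\pi_\infty$ (since $[e_i,e_j]\in\g_0$ forces $e_i,e_j$ to lie in the same $\g_k$) reduces the RHS to ${\bigwedge}^{(n-l)/2}(\pi_{\g_0}+\pi_\infty)$; the polyvector bi-degree in $\bigwedge\g_0^*\otimes\bigwedge\g_1^*$ is then forced by the LHS, picking out the single term shown. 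Your $\varphi_s$-route is a valid bookkeeping device for the same extraction (and is indeed the heuristic the section's introduction advertises), but your intermediate step~2 is redundant: the $\gS(\g_0)$-refinement of step~3 alone already isolates the correct component, and going straight there avoids the binomial constant you flagged as an obstacle.
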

\begin{proof}
The product $\textsl{d} H_1\wedge\dots \wedge \textsl{d} H_l$ is an $l$-form on $\g^*$ with polynomial 
coefficients. Among these coefficients, we are interested in those that have the maximal possible degree in $\g_0$. 
It is not difficult to see that the degree in question is equal to $\bb(\g){-}l=(n-l)/2$ and  that the 
corresponding $l$-form is either $\textsl{d} (H_1)_{(d_1,0)}\wedge\dots \wedge \textsl{d}( H_l)_{(d_l,0)}$
in the inner case or 
\[
  \textsl{d} (H_1)_{(d_1,0)}  \wedge \dots \wedge  \textsl{d} (H_k)_{(d_k,0)} \otimes 
  \textsl{d} H_{k+ 1}^{[1]}\wedge  \dots\wedge \textsl{d}H_l^{[1]}
\]
 in the outer case.  For the first one, we have 
\[
 \frac{\textsl{d} (H_1)_{(d_1,0)}\wedge\dots \wedge \textsl{d}( H_l)_{(d_l,0)}}{\omega} \in 
  \gS^{(n{-}l)/2}(\g_0)\otimes{\bigwedge}\!^{\dim\g_0{-}l}\g_0^*\otimes{\bigwedge}\!^{\dim\g_1}\g_1^*.
\]
In case of an outer involution $\sigma$, the $(n{-}l)$-vector belongs to 
\[
\gS^{(n{-}l)/2}(\g_0) \otimes
  {\bigwedge}\!^{\dim\g_0{-}k}\g_0^* \otimes{\bigwedge}\!^{\dim\g_1-l+k}\g_1^*.
\]
The right hand side of the Kostant identity is a polyvector with polynomial coefficients of degree 
$\bb(\g){-}l$.  If $\xi\otimes (x{\wedge}y)$ is a summand of $\pi$ and $\xi\in\g_0$, then 
either $x,y\in\g_1^*$ or $x,y\in\g_0^*$. This justifies the right hand sides of \eqref{K-inner} and \eqref{K-outer}. 
\end{proof}

If $\sigma$ is inner, then $\{(H_i)_{(d_i,0)}\}$ are algebraically independent. Hence also the right hand 
side of~\eqref{K-inner} is nonzero. In particular, $ {\bigwedge}\!^{\dim\g_1/2}\pi_\infty\ne 0$ in complete accordance with Lemma~\ref{lm:rank-ot-t}. 
 If $\sigma$ is outer, then  $ {\bigwedge}\!^{(\dim\g_1-l+k)/2}\pi_{\infty}\ne 0$ by Lemma~\ref{lm:rank-ot-t}. 
It is also clear that ${\bigwedge}\!^{(\dim\gt  g_0-k)/2} \pi_{\g_0}\ne 0$. Therefore the left hand side of
\eqref{K-outer} is nonzero, too. 

Suppose that $\sigma$ is inner. Then
${\bigwedge}\!^{(\dim\g_1)/2}\pi_{\infty}=F{\cdot}x_1{\wedge}\ldots{\wedge} x_{\dim\g_1}$, where $F\in\gS^{\dim\g_1}(\g_0)$ and $\{x_j\}$ is a basis for  $\g_1^*$. 
The zero set of $F$ is exactly $\g^*_{\infty, \sf sing}$.  Under the identifications   
$\g_0\simeq \g_0^*$, we have that $F(\xi_0)=\det (\ad(\xi_0)|_{\g_1})$ for $\xi_0\in\g_0$.

Let $\{\tilde H_1,\ldots,\tilde H_l\}$ be a set of suitably normalised basic $\g_0$-invariants in $\gS(\g_0)$.
Then they satisfy the Kostant identity  with $\bigwedge\!^{(\dim\g_0- l)/2}\pi_{\g_0}$ 
on the right hand side. In other words, if $\omega_0$ is the volume form on $\g_0^*$, then
\[
  \frac{\textsl{d} \tilde H_1 \wedge \dots \wedge  \textsl{d}  \tilde H_l}{\omega_0} = 
  \bigwedge\!^{(\dim\g_0-l)/2}\pi_{\g_0}. 
\]
Plugging this identity into \eqref{K-inner}, we obtain the following statement. 

\begin{cl}       \label{cl-inner}
Keep the assumption that $\sigma$ is inner and regard $(H_j)_{(d_j,0)}$ as an element of $\gS(\g_0)$. 
Then 
$$
\textsl{d}(H_1)_{(d_1,0)}{\wedge}\ldots{\wedge}\textsl{d}(H_l)_{(d_l,0)} = F{\cdot} \textsl{d}\tilde H_1{\wedge}\ldots{\wedge}\textsl{d}\tilde H_l,
$$ 
where $F$ is the same as above. Hence the differentials $\{\textsl{d}(H_i)_{(d_i,0)}\}$ are linearly dependent 
exactly on the subset $\g^*_{\infty,\sf sing} \cup (\g_0^*)_{\sf sing}$. 
\qed
\end{cl}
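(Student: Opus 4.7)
The plan is to derive the factorization by combining the inner-case identity \eqref{K-inner} with the standard Kostant identity applied to the reductive Lie algebra $\g_0$, using the product structure $\omega = \omega_0 \wedge \omega_1$ of the volume forms. Since $\sigma$ is inner, $l=\rk\g_0$, so $\tilde H_1,\dots,\tilde H_l$ is an honest set of basic invariants of $\gS(\g_0)^{\g_0}$, and $(H_j)_{(d_j,0)}$ is by definition an element of $\gS(\g_0)$ with differential living in $\gS(\g_0)\otimes\g_0$. Thus both sides of the target identity are elements of $\gS(\g_0)\otimes\bigwedge^l\g_0$ of polynomial degree $\bb(\g)-l$, and it suffices to match them inside this space.

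Concretely, fix top forms $\omega_0\in\bigwedge^{\dim\g_0}\g_0^*$ and $\omega_1\in\bigwedge^{\dim\g_1}\g_1^*$ so that $\omega = \omega_0\wedge\omega_1$. By the description of $\pi_\infty$ right before the corollary, one has
$$
\bigwedge\!^{(\dim\g_1)/2}\pi_\infty = F\cdot \omega_1,\qquad F(\xi_0)=\det\bigl(\ad(\xi_0)|_{\g_1}\bigr).
$$
For any $u\in\gS(\g_0)\otimes\bigwedge^l\g_0$, the multiplicative factorization $\omega=\omega_0\wedge\omega_1$ implies the dual factorization $u/\omega = (u/\omega_0)\otimes\omega_1^\vee$, where $\omega_1^\vee\in\bigwedge^{\dim\g_1}\g_1$ is dual to $\omega_1$. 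Contracting both sides of \eqref{K-inner} against $\omega_1^\vee$ (equivalently, wedging with $\omega_1$ and dividing by $\omega$) kills the $\bigwedge^{(\dim\g_1)/2}\pi_\infty$ factor up to the scalar $F$ and yields
$$
\frac{\textsl{d}(H_1)_{(d_1,0)}\wedge\dots\wedge\textsl{d}(H_l)_{(d_l,0)}}{\omega_0} \;=\; F\cdot \bigwedge\!^{(\dim\g_0-l)/2}\pi_{\g_0}.
$$
Applying the Kostant identity for $\g_0$, namely $\textsl{d}\tilde H_1\wedge\dots\wedge\textsl{d}\tilde H_l/\omega_0 = \bigwedge^{(\dim\g_0-l)/2}\pi_{\g_0}$, and multiplying through by $\omega_0$ (which is invertible as a duality between $\bigwedge^l\g_0$ and $\bigwedge^{\dim\g_0-l}\g_0^*$), produces the claimed factorization. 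The only delicate point is a consistent normalization of the families $\{H_j\}$ and $\{\tilde H_j\}$, but since both identities from Proposition~\ref{lm-dif-K} and from Kostant admit the same overall scalar, the scalar ambiguity is a single nonzero constant that can be absorbed into $F$ (or into the normalization of the $\tilde H_j$).

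For the second assertion, the established equality shows that $\textsl{d}(H_1)_{(d_1,0)}\wedge\dots\wedge\textsl{d}(H_l)_{(d_l,0)}$ vanishes at $\xi\in\g^*$ precisely when either $F(\xi_0)=0$ or $\textsl{d}_{\xi_0}\tilde H_1\wedge\dots\wedge\textsl{d}_{\xi_0}\tilde H_l=0$, where $\xi_0$ is the $\g_0^*$-part of $\xi$. By Lemma~\ref{lm:D_0-and-g_reg} and the description of $\pi_\infty$, the zero set of $F$ in $\g_0^*$ extended trivially in the $\g_1^*$-direction is exactly $\g^*_{\infty,\sf sing}$. The Kostant regularity criterion applied to the reductive Lie algebra $\g_0$ identifies the zero set of $\textsl{d}\tilde H_1\wedge\dots\wedge\textsl{d}\tilde H_l$ with $(\g_0^*)_{\sf sing}$. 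Taking the union yields the stated dependence locus. The main obstacle along the way is the bookkeeping for the volume-form contraction, but this reduces to the elementary observation $u/\omega=(u/\omega_0)\otimes\omega_1^\vee$ for $u$ supported in $\bigwedge^l\g_0$, so no genuine difficulty arises.
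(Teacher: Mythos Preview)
Your proof is correct and follows precisely the approach sketched in the paper: combine the identity \eqref{K-inner} with the Kostant identity for $\g_0$ via the factorization of the volume form, then read off the vanishing locus from the product $F\cdot(\textsl{d}\tilde H_1\wedge\dots\wedge\textsl{d}\tilde H_l)$. The only cosmetic point is a slight $\g$ versus $\g^*$ mismatch in where your $\omega_0,\omega_1$ live (the paper's $\omega$ is an $n$-form on $\g^*$, hence a section of $\bigwedge^n\g$), but under the Killing-form identification used throughout the paper this causes no trouble.
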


\begin{prop}      \label{centre-inf} 
Let $\sigma$ be an outer involution. Then  $(H_j)_{(d_j-1,1)}$, where $k<j\le l$,  together with a basis 
$\{\xi_1,\ldots,\xi_{\dim\g_0}\}$ of $\g_0$ freely generate $\cz_\infty$. 
Further, there is $Q\in\gS(\g_0)$ such that 
\[
Q{\cdot} \frac{\xi_1{\wedge}\ldots {\wedge} \xi_{\dim\g_0} \wedge \textsl{d}H_{k+ 1}^{[1]}{\wedge}\ldots{\wedge}
 \textsl{d} H_l ^{[1]} } {\omega} = \bigwedge\!^{(\dim\g_1-l+k)/2} \pi_\infty .
\] 
If $Q$ is regarded as a function on $\g^*$, then  its zero locus is the 
maximal divisor of $\g^*$ contained in $\g^*_{\infty,{\sf sing}}$.  \
 \end{prop}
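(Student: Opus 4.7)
The plan is to first verify the membership and count, then derive the polyvector formula from Eq.~\eqref{K-outer} combined with the Kostant identity for $\g_0$, and finally extract both the description of $V(Q)$ and the free-generation claim from that formula via Theorem~\ref{ppy-max}. For membership, each $\xi_i\in\g_0$ is central in $\g_{(\infty)}$, hence in $\cz_\infty$. For $j>k$ the invariant $H_j$ has only odd $\g_1$-degree components, and $\{H_j,\xi_1\}_{\sf LP}=0$ for every $\xi_1\in\g_1$; decomposing this identity by $\g_1$-degree, and using that $\{\,,\,\}_{0,1}$ raises $\g_1$-degree by $1$ while $\{\,,\,\}_{1,1}$ lowers it by $1$, the degree-zero piece reads $\{(H_j)_{(d_j-1,1)},\xi_1\}_{1,1}=0$, so $(H_j)_{(d_j-1,1)}\in\cz_\infty$. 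The total count $\dim\g_0+(l-k)=\ind\g_{(\infty)}$ (Lemma~\ref{lm:rank-ot-t}) equals $\trdeg\cz_\infty$.

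For the displayed formula, I would expand each $(H_i)_{(d_i,0)}=r_0(H_i)$ ($i\le k$) as a polynomial in free generators $\tilde H_1,\ldots,\tilde H_k$ of $\gS(\g_0)^{\g_0}$, yielding $\bigwedge_{i\le k}\textsl{d}(H_i)_{(d_i,0)}=J\cdot\textsl{d}\tilde H_1\wedge\ldots\wedge\textsl{d}\tilde H_k$ with $J:=\det(\partial(H_i)_{(d_i,0)}/\partial\tilde H_l)\in\gS(\g_0)$. The Kostant identity for $\g_0$ rewrites $\textsl{d}\tilde H_1\wedge\ldots\wedge\textsl{d}\tilde H_k/\omega_0$ as $\bigwedge^{(\dim\g_0-k)/2}\pi_{\g_0}$, and splitting $\omega=\omega_0\wedge\omega_1$ turns Eq.~\eqref{K-outer} into an equality of tensor products in $\gS(\g_0)\otimes\bigwedge^{\dim\g_0-k}\g_0^*\otimes\bigwedge^{\dim\g_1-l+k}\g_1^*$ with the common factor $\bigwedge^{(\dim\g_0-k)/2}\pi_{\g_0}$ on both sides. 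Since $\gS(\g_0)$ is a domain and the ambient module is free, this factor cancels, leaving $J\cdot\bigwedge_{j>k}\textsl{d}H_j^{[1]}/\omega_1=\bigwedge^{m}\pi_\infty$ with $m=(\dim\g_1-l+k)/2$. Normalising $\omega_0$ dual to $\{\xi_1,\ldots,\xi_{\dim\g_0}\}$ rewrites the left-hand side as in the displayed identity with $Q:=J$.

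To identify $V(Q)$, treat all vanishing loci as subsets of $\g_0^*$. The identity forces $V(Q)\cup V(\bigwedge_{j>k}\textsl{d}H_j^{[1]})=X_0$, the projection of $\g^*_{\infty,\sf sing}$ to $\g_0^*$ supplied by Lemma~\ref{lm:D_0-and-g_reg}. By Lemma~\ref{lm-div}(ii), the divisorial part $D_0$ of $X_0$ consists generically of subregular semisimple elements of $\g$; whenever $D_0\ne\varnothing$ the degree-maximal invariant $H_l$ is $\sigma$-invariant (the exceptional pair $(\gt{sl}_{2k+1},\gt{so}_{2k+1})$ has $r_0$ onto, hence $D_0=\varnothing$), so by Lemma~\ref{subreg-1} all $\sigma$-anti-invariant differentials $\textsl{d}_{\xi_0}H_j$ with $j>k$ are linearly independent at generic $\xi_0\in D_0$. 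Since $\textsl{d}_{\xi_0}(H_j)_{(d_j-1,1)}=\textsl{d}H_j^{[1]}(\xi_0)=\textsl{d}_{\xi_0}H_j$ at $\xi_1=0$ for $j>k$, this gives $D_0\not\subset V(\bigwedge_{j>k}\textsl{d}H_j^{[1]})$, forcing $V(Q)\supset D_0$, whence $V(Q)=D_0$ is the maximal divisor in $\g^*_{\infty,\sf sing}$.

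The same observation shows $V(\bigwedge_{j>k}\textsl{d}H_j^{[1]})\subset X_0\setminus D_0$ has codim $\ge 2$ in $\g_0^*$, so the differentials of the proposed generators of $\cz_\infty$ are linearly independent on a big open subset of $\g^*$. Theorem~\ref{ppy-max} then makes the polynomial subalgebra $S$ they generate algebraically closed in $\gS(\g)$; combined with $S\subset\cz_\infty$ and $\trdeg S=\trdeg\cz_\infty$, this gives $\cz_\infty=S$, polynomial on the stated generators. The main technical obstacle I foresee is the cancellation of $\bigwedge^{(\dim\g_0-k)/2}\pi_{\g_0}$ inside the tensor-product form of Eq.~\eqref{K-outer}; I would carry this out by passing to the fraction field of $\gS(\g_0)$ and exploiting that the ambient $\gS(\g_0)$-module is free.
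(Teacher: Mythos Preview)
Your argument is correct and reaches the same conclusions, but you reorder the logic relative to the paper and derive the $Q$-formula by a different (more explicit) mechanism.

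The paper first proves directly that $P_1:=\bigwedge_{j>k}\textsl{d}H_j^{[1]}$ is nonzero on a big open subset of $\g_0^*$, using exactly the same ingredients you invoke (regularity in $\g$ forces $P_1\ne 0$; on a putative divisor one applies Lemmas~\ref{lm-div} and~\ref{subreg-1}). From this nonvanishing the paper obtains free generation via Theorem~\ref{ppy-max}, and then deduces the $Q$-formula by the general divisibility principle of~\cite[Sect.~2]{contr}: the wedge of differentials of Poisson-central elements, once nonvanishing on a big open subset, must divide $\bigwedge^{m}\pi_\infty$. The identification of $V(Q)$ with the maximal divisor is implicit in that citation.

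You instead start from Eq.~\eqref{K-outer}, substitute the Kostant identity for $\g_0$, and cancel the common factor $\bigwedge^{(\dim\g_0-k)/2}\pi_{\g_0}$ to obtain the $Q$-formula with $Q$ explicitly the Jacobian $J=\det(\partial r_0(H_i)/\partial\tilde H_j)$. This is exactly the content of Corollary~\ref{cl-outer}, which the paper states \emph{after} the proposition; you are effectively proving it first and using it as the engine. Your cancellation step is fine: writing both sides in a basis of the free $\gS(\g_0)$-module $\bigwedge^{\dim\g_0-k}\g_0^*\otimes\bigwedge^{\dim\g_1-l+k}\g_1^*$ and using that $\gS(\g_0)$ is a domain with some coefficient of $\bigwedge^{(\dim\g_0-k)/2}\pi_{\g_0}$ nonzero gives $J\cdot(P_1/\omega_1)=\bigwedge^m\pi_\infty$ componentwise.

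Two small points. Your explicit membership check for $(H_j)_{(d_j-1,1)}\in\cz_\infty$ is correct and is a step the paper omits. Your phrase ``$V(P_1)\subset X_0\setminus D_0$'' is slightly off: what you actually prove (and need) is that $V(P_1)\subset X_0$ has no divisorial component, since any such component would lie in $D_0$ and your subregular argument excludes that. The conclusion $\codim V(P_1)\ge 2$ is correct.
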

\begin{proof}
Set $P_0=\bigwedge_{i=1}^{\dim\g_0} \xi_i$, $P_1=\bigwedge_{j=k+ 1}^{l} \textsl{d} H_{j}^{[1]}$, 
and $P=P_0 \wedge P_1$.  By the construction of $H_j^{[1]}$, we have also 
$P=P_0\wedge (\bigwedge_{j=k+ 1}^l \textsl{d} (H_j)_{(d_j{-}1,1)})$. 

Take $x\in\g_0^*$. If $\sigma(H_j)=-H_j$, then $\textsl{d}_x H_j= 
\textsl{d} H_j ^{[1]}(x) =\textsl{d}_x (H_j)_{(d_j-1,1)} \in\g_1$. 
If $y=x+x'$ with $x\in\g_0^*$, $x'\in\g_1^*$, then $P(y)=P_0\wedge P_1(x)$. 
We wish to show that $P(y)\ne 0$ on a big open subset of $\g^*$. 
This is equivalent to the {\bf claim} that $P_1(x)\ne 0$ on a big open subset of 
$\g_0^*$. 

Assume that $P_1$ is zero on an irreducible divisor $X\subset \gt g_0^*$. 
By Lemma~\ref{d-easy}{\sf (ii)}, $x\in(\gt g_0^*)_{\sf reg}$ for a generic $x\in X$. 
If $x\in\g_0^*$ is regular in $\g$, then the elements $\textsl{d}_x H_i$ with $1\le i\le l$ are linearly independent, see Eq.~\eqref{eq:ko-re-cr}, and  $P_1(x)\ne 0$. 
Thus, $\dim\gt g^x\ge l+2$ for all $x\in X$ and $X{\times}\gt g_1 \subset \g^*_{\infty,\sf sing}$. 
This settles the claim for the cases, where $r_0$ is surjective and $\g^*_{\infty,\sf sing}$ does not contain divisors. 

Suppose that $\dim \g^*_{\infty,\sf sing}=n-1$. Let $x\in X$ be generic. By Lemma~\ref{lm-div},
$\dim\gt g^x=l+2$. Lemma~\ref{subreg-1} states that 
the elements $\textsl{d}_x H_j$ with $\sigma(H_j)=-H_j$ are linearly independent.  
Thereby $P_1(x)\ne 0$. 
The claim is settled. 

By Theorem~\ref{ppy-max}, the subalgebra of $\gS(\g)$ generated by $(H_j)_{(d_j{-}1,1)}$ with $k<j\le l$
and $\xi_i$ with $1\le i \le \dim\g_0$  is algebraically closed. 
Since it  lies inside $\cz_\infty$ and has the same transcendence degree, $\dim\g_0+(l-k)$, 
it coincides with $\cz_\infty$. 

Since $P$ is non-zero on a big open subset, we have 
$$
Q{\cdot} \frac{\xi_1{\wedge}\ldots {\wedge} \xi_{\dim\g_0} \wedge \textsl{d}H_{k+ 1}^{[1]}{\wedge}\ldots{\wedge}
 \textsl{d} H_l^{[1]} } {\omega} = {\bigwedge}\!^{(\dim\g_1-l+k)/2} \pi_\infty 
$$ 
for some $Q\in\gS(\g)$, see e.g.  \cite[Section~2]{contr}. Since all the coefficients in the right hand 
side are elements of $\gS(\g_0)$, we have $Q\in\gS(\g_0)$ as well. 
\end{proof}

\begin{rmk}  \label{rem:Z-infty}
If $\sigma$ is inner, then $\trdeg\cz_\infty=\dim\g_0$ and it is easily seen that 
$\cz_\infty=\gS(\g_0)$ as subalgebra of $\gS(\g_{(\infty)})$. In particular, $\cz_\infty$ is always a polynomial algebra.
\end{rmk}

Combining Proposition~\ref{centre-inf} with Eq.~\eqref{K-outer} and the Kostant identity for 
$\g_0$, we obtain the following assertion.  

\begin{cl}         \label{cl-outer}
Let $\tilde H_1,\ldots,\tilde H_k$ be properly normalised basic
$\g_0$-invariants in $\gS(\g_0)$. Then 
$$
\textsl{d}(H_1)_{(d_1,0)}\wedge\ldots \wedge\textsl{d}(H_k)_{(d_k,0)} = Q{\cdot}\textsl{d}\tilde H_1\wedge\ldots\wedge\textsl{d}\tilde H_k 
$$ 
in $\gS(\g_0)\otimes {\bigwedge}\!^{k}\g_0$ with the same $Q$ as in Proposition~\ref{centre-inf}.
The differentials $\textsl{d}(H_1)_{(d_1,0)},\ldots,\textsl{d}(H_k)_{(d_k,0)}$ are linearly dependent exactly 
on the union of $(\g_0^*)_{\sf sing}$ with the zero set of $Q$. 
\qed
\end{cl}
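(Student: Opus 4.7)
The plan is to combine three inputs: the max-$\g_0$-degree Kostant identity \eqref{K-outer}, the formula from Proposition~\ref{centre-inf} that extracts the factor $Q$ from $\bigwedge^{(\dim\g_1-l+k)/2}\pi_\infty$, and the Kostant identity applied to the reductive algebra $\g_0$ itself, namely
\[
  \frac{\textsl{d}\tilde H_1\wedge\ldots\wedge \textsl{d}\tilde H_k}{\omega_0}=\bigwedge\!^{(\dim\g_0-k)/2}\pi_{\g_0}
\]
(for a properly normalised generating set in $\gS(\g_0)^{\g_0}$). Substituting the latter two identities into both sides of \eqref{K-outer} gives the desired equality after isolating the $\g_0$-slot.

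Concretely, factor $\omega=\omega_0\wedge\omega_1$, where $\omega_i$ is the volume form on $\g_i^*$, and set
\[
  A=\textsl{d}(H_1)_{(d_1,0)}\wedge\ldots\wedge \textsl{d}(H_k)_{(d_k,0)},\qquad
  B=\textsl{d}H_{k+1}^{[1]}\wedge\ldots\wedge \textsl{d}H_l^{[1]}.
\]
All the exterior factors of $A$ lie in $\g_0$ and those of $B$ lie in $\g_1$, so the quotient by $\omega$ factorises as $(A\wedge B)/\omega=(A/\omega_0)\wedge (B/\omega_1)$. Applying the same bookkeeping to Proposition~\ref{centre-inf} and noting that $\xi_1\wedge\ldots\wedge\xi_{\dim\g_0}/\omega_0=\pm 1$, we obtain $\bigwedge^{(\dim\g_1-l+k)/2}\pi_\infty=\pm Q\cdot B/\omega_1$. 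Plugging this together with the Kostant identity for $\g_0$ into \eqref{K-outer}, and absorbing the sign into the normalisation of the $\tilde H_i$, yields
\[
   (A/\omega_0)\wedge(B/\omega_1)=Q\cdot\frac{\textsl{d}\tilde H_1\wedge\ldots\wedge \textsl{d}\tilde H_k}{\omega_0}\wedge\frac{B}{\omega_1}.
\]
Since $B/\omega_1$ is nonzero on a big open subset (by the argument in the proof of Proposition~\ref{centre-inf}), we may cancel this common factor and then pair back via $\omega_0$ to obtain $A=Q\cdot \textsl{d}\tilde H_1\wedge\ldots\wedge \textsl{d}\tilde H_k$ in $\gS(\g_0)\otimes\bigwedge^k\g_0$.

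The second assertion is then immediate: the differentials $\textsl{d}(H_1)_{(d_1,0)},\ldots,\textsl{d}(H_k)_{(d_k,0)}$ are linearly dependent at $x\in\g_0^*$ exactly where $A(x)=0$, and by the identity just proved this happens if and only if $Q(x)=0$ or $(\textsl{d}\tilde H_1\wedge\ldots\wedge \textsl{d}\tilde H_k)(x)=0$. The latter locus is precisely $(\g_0^*)_{\sf sing}$ by the Kostant regularity criterion for the reductive algebra $\g_0$. The one point requiring care is the cancellation of $B/\omega_1$; it is legitimate because $B/\omega_1$ is nonzero on a dense open set, so the preceding equality of polynomial-valued multivectors can first be verified on that set and then extended by continuity (equivalently, because both sides lie in a free $\gS(\g_0)$-module).
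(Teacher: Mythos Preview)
Your proof is correct and follows exactly the route the paper indicates: the paper simply states that the corollary is obtained by ``combining Proposition~\ref{centre-inf} with Eq.~\eqref{K-outer} and the Kostant identity for $\g_0$'' and marks it with a \qed, leaving the details implicit. You have supplied precisely those details---the factorisation through $\omega=\omega_0\wedge\omega_1$, the identification of $\bigwedge^{(\dim\g_1-l+k)/2}\pi_\infty$ with $\pm Q\cdot B/\omega_1$, and the cancellation justified by the nonvanishing of $B/\omega_1$ on a big open set (established in the proof of Proposition~\ref{centre-inf})---so there is nothing to add.
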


Note that $Q$ is the Pfaffian in the setting of Example~\ref{ex-sl1}.

\section{Further developments and possible applications}  
\label{sect:further}
 
\noindent 
We believe that this paper is the beginning of a long exciting journey. Several applications of our 
construction are already available and are presented below.  Goals further ahead are stated as conjectures.   
 
\subsection{Quantum perspectives} 
\label{sec-q}
Let $\eus U(\g)$ be the enveloping algebra of $\g$.
Given a Poisson-commutative subalgebra $\mathcal C\subset \gS(\g)$, it is
natural  to ask whether there exists a commutative subalgebra
$\widehat{\mathcal C}\subset \eus U(\g)$ such that ${\rm gr}(\widehat{\mathcal C})={\mathcal C}$. \
This question was posed by Vinberg for the Mishchenko--Fomenko subalgebras~\cite{v:sc}, and it is 
known nowadays as {\it Vinberg's problem}. For the semisimple $\g$, the first conceptual solution 
was obtained in \cite{r:si}. The r\^ole of the symmetrisation map 
$\varpi : \gS(\g)\to {\eus U}(\g)$ in that quantisation for the classical $\g$ is explained in \cite{m-y}. 
 
\begin{conj}
Suppose that there is a {\sf g.g.s.} for $\sigma$. Let $\widehat{\gZ}$ be the subalgebra of\/ ${\eus U}(\g)$ generated by $\varpi((H_j)_{(i,d_j{-}i)})$ with $1\le i\le l$, $0\le i\le d_i$. Then 
$\widehat{\gZ}$ is commutative and\/ ${\rm gr}(\widehat{\gZ}) = \gZ$.  
\end{conj}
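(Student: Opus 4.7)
The claim has two parts: commutativity of $\widehat{\gZ}$ and the equality $\gr(\widehat{\gZ}) = \gZ$. Granting commutativity, the second part is immediate: since $\varpi$ satisfies $\gr(\varpi(F)) = F$ for homogeneous $F \in \gS(\g)$, the symbols of the generators $\varpi((H_j)_{(i,d_j-i)})$ of $\widehat{\gZ}$ are the free polynomial generators of $\gZ$ described in Theorem~\ref{free-main}. Thus $\gr(\widehat{\gZ})$ is a Poisson-commutative subalgebra of $\gS(\g)$ containing these symbols, and by the transcendence degree formula~\eqref{eq:formula} combined with Theorem~\ref{prop:main2}, it must coincide with $\gZ$; the $\varpi((H_j)_{(i,d_j-i)})$ are then automatically algebraically independent.

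My plan for commutativity is to realise the generators of $\widehat{\gZ}$ as linear combinations of elements drawn from a family of quantum Mishchenko--Fomenko subalgebras $\widehat{\ca}_\gamma \subset \eus U(\g)$, which exist for each regular $\gamma \in \g^*$ by Rybnikov's construction~\cite{r:si} (with explicit symmetrised generators in classical types by~\cite{m-y}). Fix generic $\gamma_0 \in \g_0^*$ and $\gamma_1 \in \g_1^*$ and consider the one-parameter family $\gamma(s) = \gamma_0 + s^{-1}\gamma_1$ for $s \in \bbk^\times$. A Vandermonde-type argument in $s$, applied to the $s$-expansion of $\varpi(\partial_{\gamma(s)}^k H_j)$, should express each generator $\varpi((H_j)_{(i,d_j-i)})$ of $\widehat{\gZ}$ in terms of elements of finitely many $\widehat{\ca}_{\gamma(s)}$. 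Commutativity of $\widehat{\gZ}$ then follows, provided one can establish that the various $\widehat{\ca}_{\gamma(s)}$ pairwise commute in $\eus U(\g)$---a ``quantum compatibility'' mirroring the classical compatibility of the Poisson brackets in the family~\eqref{eq:family}.

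The principal obstacle is recovering the bi-homogeneous components of maximal $\g_1$-degree, i.e.\ the $H_j^\bullet$, which correspond classically to the limit $s \to 0$ (equivalently $t \to \infty$). At the classical level this is handled via the inversion $s \mapsto s^{-1}$ in the proof of Theorem~\ref{thm:main3-1}, but at the quantum level the bracket $\{\,,\,\}_\infty$ corresponds to the degenerate non-reductive Lie algebra $\g_{(\infty)}$ whose enveloping algebra lacks an obvious centre. A natural route is to work directly with the Feigin--Frenkel centre $\z(\widehat{\g})$ of the affine Kac--Moody algebra $\widehat{\g}$ at the critical level; this centre is $\sigma$-stable, and appropriately twisted evaluation homomorphisms compatible with the $\BZ_2$-grading should yield commutative lifts of all $\cz_t$ with $t \in \BP_{\sf reg}$ inside $\eus U(\g)$ in a uniform manner, bypassing both the inversion trick and the need to verify pairwise compatibility of the $\widehat{\ca}_{\gamma(s)}$ separately. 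This Feigin--Frenkel viewpoint would also naturally suggest the representation-theoretic applications to the branching problem alluded to in Section~\ref{sec-q}.
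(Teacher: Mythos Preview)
The statement you are attempting to prove is explicitly labelled a \emph{Conjecture} in the paper and is left open; there is no proof in the paper to compare your proposal against. Your write-up is accordingly not a proof but a strategy sketch, and you yourself flag its main gap.

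A few concrete remarks on the strategy. Your reduction of ${\rm gr}(\widehat{\gZ})=\gZ$ to commutativity is correct: once the $\varpi((H_j)_{(i,d_j-i)})$ pairwise commute, the standard fact that commuting lifts of algebraically independent elements generate a polynomial algebra with the expected associated graded applies, using Theorem~\ref{free-main}.

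For commutativity, however, the Mishchenko--Fomenko route you outline does not work as stated. The bi-homogeneous components $(H_j)_{(i,d_j-i)}$ arise from the scaling automorphisms $\varphi_s$ of Section~\ref{subs:contr-compat}, i.e.\ from the $s$-expansion of $\varphi_s^{-1}(H_j)$, \emph{not} from directional derivatives $\partial_{\gamma(s)}^k H_j$; the coefficients of the latter in $s$ are mixed partials, not the bi-homogeneous pieces, so the Vandermonde step does not recover the generators of $\widehat{\gZ}$. More seriously, the ``quantum compatibility'' you would need---that the Rybnikov subalgebras $\widehat{\ca}_{\gamma(s)}$ for distinct $s$ commute with one another in $\eus U(\g)$---is simply false in general: already at the Poisson level, $\ca_\gamma$ and $\ca_{\gamma'}$ for non-proportional $\gamma,\gamma'$ do not Poisson-commute. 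The relevant one-parameter family here is not a family of argument shifts but the family of Lie brackets $\{\,,\,\}_t$ on the \emph{same} vector space, whose centres $\cz_t$ are \emph{isomorphic copies} of $\gS(\g)^\g$ under $\varphi_s$; a quantum analogue would require commuting lifts of these $\varphi_s$-twisted centres, which is a different (and genuinely open) problem. Your closing suggestion to use a $\sigma$-equivariant Feigin--Frenkel construction is closer in spirit to what would be needed, but remains a programme rather than an argument.
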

 
For the symmetric pairs 
$(\gt{gl}_{n+ m},\gt{gl}_n{\oplus} \gt{gl}_m)$, 
$(\gt{sp}_{2(n+ m)},\gt{sp}_{2n}{\oplus} \gt{sp}_{2m})$, and
$(\gt{so}_{n+ m},\gt{so}_n{\oplus} \gt{so}_m)$,
there might be a connection between $\widehat{\gZ}$ and commutative subalgebras of Yangians or 
twisted Yangians.

The Yangian $Y(\gt{gl}_m)$ is a deformation of the enveloping algebra $\eus U(\gt{gl}_m[z])$ of the 
current algebra $\gt{gl}_m[z]$ given by explicit generators and relations. Then
${\eus U}(\gt{gl}_m)$ is a subalgebra 
of $Y(\gt{gl}_m)$. The facts on Yangians, which are used below, can be found in \cite{book}, see in particular Chapter~8 therein. The most relevant for us is the {\it centraliser construction} of 
Olshanski~\cite{o} and Molev--Olshanski~\cite{mo}.  
For any $n$, there is an almost surjective map
$$
          \Psi_n\!: Y(\gt{gl}_m)\to {\eus U}(\gt{gl}_{n+ m})^{\gt{gl}_n}, 
$$
where the words ``almost surjective" mean that ${\eus U}(\gt{gl}_{n+ m})^{\gt{gl}_n}$ is generated by the image of $Y(\gt{gl}_m)$ and ${\eus U}(\gt{gl}_n)^{\gt{gl}_n}$. 
It is known that, for a fixed $m$, $\bigcap\limits_{n\ge 1} \ker\Psi_n=0$. 

\begin{qtn}
Is there a commutative subalgebra ${\mathcal B}\subset Y(\gt{gl}_m)$ such that 
${\rm gr}(\Psi_n({\mathcal B}))$ together with ${\mathcal Z}\gS(\g_0)$ generate $\tilde\gZ\subset \gS(\gt{gl}_m{\oplus} \gt{gl}_n)$?
\end{qtn}

Let $Y(\gt{sp}_{2m})\subset Y(\gt{gl}_{2m})$ be the twisted Yangian in the sense of G.\,Olshanski. 
Here ${\eus U}(\gt{sp}_{2m})\subset Y(\gt{sp}_{2m})$ and there is again an almost surjective map 
\[  
     \Psi_n\!: Y(\gt{sp}_{2m})\to {\eus U}(\gt{sp}_{2n+ 2m})^{\gt{sp}_{2n}}. 
\]
Then one can pose an analogous question. A similar situation occurs for 
$Y(\gt{so}_{m})\subset Y(\gt{gl}_{m})$ and $\eus U(\gt{so}_{n+m})$ with $n$ even.  
 
Any natural quantisation of $\gZ$ has to provide a commutative subalgebra 
$\widehat{\gZ} \subset {\eus U}(\g)^{\g_0}$. By adding ${\eus U}(\g_0)^{\g_0}$ 
one obtains the related quantisation $\widehat{\tilde\gZ}$ of $\tilde\gZ$.
Let $V$ be a finite-dimensional simple $\g$-module. Then $\widehat{\tilde\gZ}$ acts on 
the subspace $V^{\gt n_0}\subset V$ of the highest  weight vectors of $\g_0$.  

\begin{conj} \label{conj-spec}
Let  $\widehat{\tilde\gZ}\subset\eus U(\g)$ be the subalgebra generated by 
$\varpi((H_j)_{(i,d_j{-}i)})$ with $1\le i\le l$, $0\le i\le d_i$ and by $\eus U(\g_0)^{\g_0}$. 
Then  $\widehat{\tilde\gZ}$ acts on $V^{\gt n_0}$  diagonalisably  and with a simple spectrum. 
\end{conj}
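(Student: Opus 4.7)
The plan is to reduce Conjecture~\ref{conj-spec} to a question about the joint spectrum of $\widehat{\gZ}$ on each $\g_0$-multiplicity subspace of $V$, and then to approach the latter by deforming in the Poisson pencil, taking as input the preceding conjecture that $\widehat{\tilde\gZ}$ is commutative with associated graded $\tilde\gZ$. First I would observe that $\widehat{\tilde\gZ}\subset \eus U(\g)^{\g_0}$ commutes with $\eus U(\g_0)$, so preserves $V^{\gt n_0}$, which splits as $\bigoplus_{\lambda} V_\lambda^{\gt n_0}$ indexed by the $\g_0$-dominant weights $\lambda$ appearing in the restriction of $V$ to $\g_0$. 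The factor $\eus U(\g_0)^{\g_0}\subset \widehat{\tilde\gZ}$ acts on $V_\lambda^{\gt n_0}$ through the Harish--Chandra character $\chi_\lambda$, hence separates distinct $\lambda$'s. It therefore suffices to show that $\widehat{\gZ}$ acts on each $V_\lambda^{\gt n_0}$ diagonalisably and with simple spectrum.

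The dimension heuristic is encouraging: by Theorem~\ref{free-main} and Theorem~\ref{thm:maxim2}, $\trdeg\tilde\gZ=\bb(\g)-\bb(\g_0)+\ind\g_0$, which is precisely the functional dimension required for complete integrability of the $\g_0$-reduced Hamiltonian system on a generic coadjoint orbit. To upgrade this into simple spectrum, I would run a semicontinuity argument in the parameter $t\in\BP$. Let $\widehat{\tilde\gZ}_t$ denote the commutative quantisation corresponding to $\{\,,\,\}_t$, obtained for $t\ne 0,\infty$ from $\widehat{\tilde\gZ}$ by applying $\vp_s^{-1}$ with $s^2=t$, and let $N(t)$ be the number of distinct joint eigenvalues of $\widehat{\tilde\gZ}_t$ on $V^{\gt n_0}$. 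Since $N$ is lower semicontinuous in $t$, simple spectrum at a single specialisation $t_0\in\BP$ propagates to a generic value, and in particular to $t=1$. The natural candidate is $t_0=\infty$: Proposition~\ref{centre-inf} and Remark~\ref{rem:Z-infty} describe $\cz_\infty$ as generated by $\gS(\g_0)$ together with the $\g_0$-invariants $(H_j)_{(d_j-1,1)}$ on $\g_1^*$, and for the classical pairs listed in Section~\ref{sec-q} I would identify the quantisation of the latter with the image of a Bethe-type commutative subalgebra in an Olshanski twisted Yangian acting with known simple spectrum on the branching multiplicity spaces.

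The hard part will be the simple-spectrum clause for general, in particular exceptional, $(\g,\g_0)$: unlike in the Mishchenko--Fomenko setting we have no free parameter $\gamma$ to perturb, so the separation of eigenvalues must be intrinsic to the pair. In the classical families the Olshanski centraliser framework hinted at in Section~\ref{sec-q} looks promising, while in the exceptional good pairs one likely needs either a $\BZ_2$-graded analogue of the Feigin--Frenkel--Reshetikhin construction via affine Lie algebras at the critical level, or a delicate case-by-case analysis of branching. Diagonalisability, by contrast, should be the easier half: choosing each $H_j$ to be real-valued on a compact real form $\ka$ of $\g$ compatible with $\sigma$ makes every $\varpi((H_j)_{(i,d_j-i)})$ self-adjoint with respect to the standard contravariant Hermitian form on $V$, at which point the spectral theorem on each finite-dimensional $V_\lambda^{\gt n_0}$ yields diagonalisability of the whole of $\widehat{\tilde\gZ}$ automatically.
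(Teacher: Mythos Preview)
The statement you are attempting to prove is labelled \emph{Conjecture}~\ref{conj-spec} in the paper, and the paper gives no proof of it. The authors only remark that it holds in the Gelfand--Tsetlin situations of Example~\ref{G-Ts-ex}, where the restriction $\g\downarrow\g_0$ is multiplicity free, so simple spectrum is automatic. There is therefore no ``paper's own proof'' to compare your proposal against: you are sketching a strategy for an open problem.

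As a strategy, your reduction to the multiplicity spaces $V_\lambda^{\gt n_0}$ and your Hermitian argument for diagonalisability are sound in outline (provided you arrange $\sigma$ to commute with the chosen Cartan involution so that the bi-homogeneous components are themselves self-adjoint). The semicontinuity step, however, is more delicate than you suggest. For $t\in\bbk^\times$ the algebras $\widehat{\tilde\gZ}_t=\vp_s^{-1}(\widehat{\tilde\gZ})$ are all conjugate inside $\eus U(\g)$ by an automorphism of $\g$, hence have identical spectra on $V$; so $N(t)$ is constant on $\bbk^\times$ and the only useful specialisation is a genuine degeneration at $t=0$ or $t=\infty$. But the limit at $t=\infty$ lives naturally in $\eus U(\g_{(\infty)})$, not in $\eus U(\g)$, and $V$ is not a module for $\g_{(\infty)}$; you would need a Rees-type flat family over $\BP$ together with a compatible family of modules, which you have not set up. Even granting that, you yourself flag that the simple-spectrum input at the special fibre is unavailable outside the classical families covered by the Olshanski centraliser construction. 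In short, your proposal is an honest outline of where one might look, but it is not a proof, and the paper does not claim one either.
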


If Conjecture~\ref{conj-spec} is true, then the action of $\widehat{\tilde\gZ}$
produces a solution of the branching problem  $\g \downarrow \g_0$.  
There are two renowned examples, where both conjectures are true. 
 
\begin{ex}[{The Gelfand--Tsetlin construction \cite{gt-1,gt-2}}]      \label{G-Ts-ex}
Let $(\g,\g_0)$ be one of the symmetric pairs $(\gt{sl}_{n+ 1},\gt{gl}_n)$, $(\gt{so}_{n+ 1},\gt{so}_n)$.
Then each $H_i$ has at most two nonzero bi-homogeneous components. 
To be more precise, the Pfaffian in the case of $\g=\gt{so}_{2l}$ has one nonzero component, and 
all the other generators have exactly two. It follows that $\tilde\gZ$ is generated by 
$\gS(\g_0)^{\g_0}$ and $\gS(\g)^{\g}$. The quantum analogue $\widehat{\tilde\gZ}$ is generated by 
$\eus U(\g_0)^{\g_0}$ and $\eus U(\g)^{\g}$. 

For each irreducible finite-dimensional representation $V$ of $\g$, the restriction to $\g_0$ is 
multiplicity free. Hence the action of $\widehat{\tilde\gZ}$ on $V^{\gt n_0}$ has a simple spectrum. 
\end{ex}
  
\subsection{Classical applications}  \label{sec-cl}
Let us return to the Poisson side of the story. 

Suppose that there is a {\sf g.g.s.} for $\sigma$. Although $\tilde\gZ$ (or $\gZ$) is not a maximal 
Poisson-commutative subalgebra of $\gS(\g)$, it can be included into such a subalgebra in many natural ways.  
Let  $\mathcal C=\bbk[F_1,\ldots,F_{\bb(\g_0)}]$ be a maximal Poisson-commutative subalgebra of
$\gS(\g_0)$.
Then necessary $\gS(\gt g_0)^{\gt g_0}\subset \mathcal C$. 
 Suppose further that the $F_i$'s are homogeneous and their differentials 
are linearly independent on a big open subset of $\g_0^*$. For instance, one can take 
$\gc={\ca}_\gamma$ with $\gamma\in(\g_0)^*_{\sf reg}$, see~ \cite{mrl}. An easy calculation shows that 
$\mathsf{alg}\langle \tilde\gZ, \gc \rangle=\mathsf{alg}\langle\gZ, \gc\rangle$ has $\bb(\g)$ generators.
Indeed, $\tilde\gZ$ (or $\gZ$) has $\frac{1}{2}(\dim\g_1+\rk\g+\rk\g_0)$ free generators. Then we replace the generators sitting in $\gS(\g_0)$ (there are $\rk\g_0$ of them) with the whole bunch of generators of
$\gc$. In this way, we obtain
\[
  \frac{1}{2}(\dim\g_1+\rk\g+\rk\g_0)-\rk\g_0+\bb(\g_0)=\bb(\g)
\]
generators $\{F_i,  \boldsymbol{h}_j \mid 1\le i\le \bb(\gt g_0), 1\le j\le \bb(\gt g)-\bb(\gt g_0)\}$.
Furthermore,  the differentials $\{\textsl{d} F_i, \textsl{d}\boldsymbol{h}_j\}$
 are linearly independent at $x\in\gt g^*$ if and only if 
$\dim(\textsl{d}_x \tilde\gZ + \textsl{d}_x \mathcal C)=\bb(\gt g)$. 
Write $x=x_0+x_1$ with $x_i\in\gt g_i$ and suppose that $x_0\in(\gt g_0^*)_{\sf reg}$. 
Then 
\[
(\textsl{d}_x \tilde \gZ \cap \textsl{d}_x\mathcal C) \subset \gt g_0, \ \ 
\pi(x)(\gt g_0,\textsl{d}_x \tilde\gZ)=0, \ \text{ and hence } \ \ \textsl{d}_x \tilde\gZ \cap \textsl{d}_x\mathcal C=\gt g_0^{x_0}. 
\]
If in addition $\dim\textsl{d}_x \tilde\gZ=\trdeg \gZ$ and $\dim\textsl{d}_{x_0}\mathcal C=\bb(\gt g_0)$, 
then $\dim(\textsl{d}_x \tilde\gZ + \textsl{d}_x \mathcal C)=\bb(\gt g)$. 
In view of Theorem~\ref{thm:maxim2}{\sf (i)}, we can conclude that 
the differentials $\{\textsl{d} F_i, \textsl{d}\boldsymbol{h}_j\}$ are linearly independent 
on a big open subset of $\g^*$. 
Thus, Theorem~\ref{ppy-max} applies and assures that $\mathsf{alg}\langle \tilde\gZ, \gc \rangle$ 
is a maximal Poisson-commutative subalgebra of $\g$. 

Arguing inductively, one can produce a maximal Poisson-commutative subalgebra of $\gS(\g)$ 
from a chain of symmetric subalgebras
$$
\g=\g^{(0)}\supset \g^{(1)}\supset \g^{(2)} \supset\ldots \supset \g^{(m)},
$$
where $\g^{(m)}$ is Abelian and 
each symmetric pair $(\g^{(i)},\g^{(i{+}1)})$ has a {\sf g.g.s.}

\begin{rema}
{\sf (i)} For any simple Lie algebra $\g$, there is an involution $\sigma$ that has a {\sf g.g.s.}~\cite[Sect.\,6]{coadj}.  Therefore our construction of a maximal Poisson-commutative subalgebra of $\gS(\g)$ 
related to a chain of symmetric subalgebras works for any simple $\g$. 

{\sf (ii)}  In \cite[\S\,6]{v:sc}, limits of Mishchenko--Fomenko subalgebras were introduced. 
The  Poisson counterpart of the Gelfand--Tsetlin subalgebra of $\eus U(\gt{sl}_{n{+}1})$
related to the chain 
\[
\gt{sl}_{n+1}\supset\gt{gl}_n\supset\gt{gl}_{n-1}\supset \ldots \supset\gt{gl}_2\supset\gt{gl}_1 ,
\]
appears as one of these limit subalgebras, see also   Example~\ref{G-Ts-ex}. 
The key point of Vinberg's construction is that the Poincar\'e series of any limit subalgebra is the same as that of  ${\ca}_\gamma$ with $\gamma\in\gt g^*_{\sf reg}$.  
With a few exceptions, 
our approach produces Poisson-commutative subalgebras with 
different  Poincar\'e series.  This can be illustrated by the chain 
$$
\gt{so}_{5}\supset\gt{so}_4\supset \gt{so}_{2}\oplus\gt{so}_2 .
$$ 
Here the degrees of the generators of the related maximal Poisson-commutative subalgebra are 
$(4,2,2,2,1,1)$ opposite to $(4,3,2,1,2,1)$ in the case of ${\ca}_\gamma$. 
\end{rema}

Another feature is that  $\gZ$ can be used for constructing a 
Poisson-commutative subalgebra of $\gS(\g_0)$. 
Let $(\g,\g_0)$ be an arbitrary symmetric pair. If there is a {\sf g.g.s.} for $(\g,\g_0)$, then we are able 
to consider both algebras, $\gZ$ and $\tilde\gZ$.
For $\eta\in\g_1^*$, let  $\gZ_\eta$, $\tilde\gZ_\eta$ denote the restrictions of $\gZ$ and $\tilde\gZ$  to 
$\g_0^*+ \eta$. By choosing $\eta$ as the origin, we identify $\g^*_0+\eta$ with $\g_0^*$.
Then $\gZ_\eta$ and $\tilde\gZ_\eta$ are  homogeneous  subalgebras of $\gS(\g_0)$. 
Moreover, they Poisson-commute with $\g_0^\eta$. 

\begin{lm}       \label{apl0}
The subalgebras $\gZ_\eta$ and $\tilde\gZ_\eta$ are Poisson-commutative.
\end{lm}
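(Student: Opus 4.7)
The plan is to reduce the $\tilde\gZ_\eta$ statement to the $\gZ_\eta$ statement and then prove the latter by a short Killing-form calculation that combines two properties of $\gZ$: the $\g_0$-invariance inherited from $\gZ\subset\gS(\g)^{\g_0}$, and the Poisson-commutativity with respect to the contracted bracket $\{\,\,,\,\}_0$ (Corollary~\ref{cl-ort}). For the reduction, note that $\tilde\gZ=\mathsf{alg}\langle\gZ,\gS(\g_0)^{\g_0}\rangle$; since elements of $\gS(\g_0)^{\g_0}\subset\gS(\g)$ are unaffected by the shift by $\eta\in\g_1^*$, one has $\tilde\gZ_\eta=\mathsf{alg}\langle\gZ_\eta,\gS(\g_0)^{\g_0}\rangle$. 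The Poisson centre $\gS(\g_0)^{\g_0}$ commutes with everything in $\gS(\g_0)$, so it will be enough to handle $\gZ_\eta$.

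For $F,G\in\gZ$ and $\xi_0\in\g_0^*$, write $d_{\xi_0+\eta}F=f_0+f_1$ and $d_{\xi_0+\eta}G=g_0+g_1$ with $f_i,g_i\in\g_i$. Since $F_\eta(\xi_0)=F(\xi_0+\eta)$, the chain rule identifies $d_{\xi_0}F_\eta$ with the $\g_0$-component $f_0$ of $d_{\xi_0+\eta}F$, and similarly for $G_\eta$. Therefore
\[
 \{F_\eta,G_\eta\}_{\g_0}(\xi_0)=\xi_0([f_0,g_0]),
\]
and the task reduces to showing that this vanishes for every $\xi_0\in\g_0^*$.

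The first input is $F\in\gS(\g)^{\g_0}$: invariance of $F$ under $G_0$ is equivalent to $\{F,x_0\}_{\sf LP}\equiv 0$ for all $x_0\in\g_0$, which via the Killing-form identification $\g^*\simeq\g$ translates into the pointwise statement $[\xi,d_\xi F]\in\g_1$. At $\xi=\xi_0+\eta$, the $\g_0$-component of $[\xi_0+\eta,f_0+f_1]$ is $[\xi_0,f_0]+[\eta,f_1]$, and it must vanish. Pairing this identity with $g_0$ via the Killing form $\kappa$ and using its invariance gives
\[
 \xi_0([f_0,g_0])=\kappa([\xi_0,f_0],g_0)=-\kappa([\eta,f_1],g_0)=-\eta([f_1,g_0]).
\]
The symmetric computation with $F$ and $G$ interchanged yields $\xi_0([f_0,g_0])=-\eta([f_0,g_1])$. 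In particular,
$\eta([f_0,g_1])=\eta([f_1,g_0])=-\xi_0([f_0,g_0])$.

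The second input is that $\gZ$ is Poisson-commutative with respect to every bracket in the family, and in particular $\{F,G\}_0=\{F,G\}_{0,0}+\{F,G\}_{0,1}=0$ (Corollary~\ref{cl-ort}). Evaluating at $\xi=\xi_0+\eta$,
\[
 0=\{F,G\}_0(\xi_0+\eta)=\xi_0([f_0,g_0])+\eta([f_0,g_1]+[f_1,g_0]).
\]
Substituting the identities from the previous paragraph collapses the right-hand side to $-\xi_0([f_0,g_0])$, forcing $\xi_0([f_0,g_0])=0$, which is exactly what was needed. The only conceptual point is to invoke the contracted bracket $\{\,\,,\,\}_0$ and not $\{\,\,,\,\}_{\sf LP}$: the latter alone yields only a consistency relation among the three terms $\xi_0([f_0,g_0])$, $\xi_0([f_1,g_1])$, $\eta([f_0,g_1]+[f_1,g_0])$, rather than the vanishing of any one of them.
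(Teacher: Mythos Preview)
Your proof is correct and follows essentially the same approach as the paper's: both arguments combine the $\g_0$-invariance of the elements with Poisson-commutativity of $\gZ$ (or $\tilde\gZ$) with respect to the whole pencil $\{\,,\,\}_t$. The packaging differs only slightly: you first reduce $\tilde\gZ_\eta$ to $\gZ_\eta$ and then use the pointwise form of $\g_0$-invariance, $[\xi,d_\xi F]\in\g_1$, together with the single bracket $\{\,,\,\}_0$; the paper treats $\gZ$ and $\tilde\gZ$ uniformly, expresses $\g_0$-invariance via $\{\xi_0,F\}_{\sf LP}(x{+}\eta)=0$, and uses the pair $\{\,,\,\}_1$, $\{\,,\,\}_\infty$ instead (which span the same pencil as $\{\,,\,\}_0$, $\{\,,\,\}_\infty$, so the information content is identical).
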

\begin{proof}
Take $H,F\in\gZ$ or $H,F\in\tilde\gZ$ and $x\in\g_0^*$. 
Let ${\bf h}$ and ${\bf f}$ be the restrictions of $H,F$ to $\g_0^*+\eta$. 
Then  $\textsl{d}_{x+\eta} H=\textsl{d}_x{\bf h} + \xi_1$, 
$\textsl{d}_{x+\eta} F=\textsl{d}_x{\bf f} + \nu_1$, where $\xi_1,\nu_1\in\g_1$.
Set $\xi_0= \textsl{d}_x{\bf h}$, $\nu_0 = \textsl{d}_x{\bf f}$. Our goal is to show that 
$x([\xi_0,\nu_0])=0$. 

Since $H$ and $F$ commute w.r.t. any bracket $\{\,\,,\,\}_t$ with $t\in\mathbb P$, we have in particular 
$x([\xi_1,\nu_1])=0$, as well as $(x+\eta)([\xi_0+\xi_1,\nu_0+\nu_1])=0$. Both are also $\g_0$-invariants. Therefore 
$$
(x{+}\eta)([\xi_0,\nu_0+\nu_1])=0, \qquad 0=(x+\eta)([\nu_0,\xi_0+\xi_1])=x([\nu_0,\xi_0])+\eta([\nu_0,\xi_1]). 
$$
Now $0=(x+\eta)([\xi_1,\nu_0 +\nu_1])=\eta([\xi_1,\nu_0])$ and it is clear that $x([\xi_0,\nu_0])=0$.
\end{proof}

\begin{rmk}       \label{rem:man}
Let $(\g,\g_0)=(\gt{sl}_n,\gt{so}_n)$. The corresponding involution $\sigma$ is of
{\it maximal rank\/} and any set of generators $H_1,\ldots,H_l\in \gS(\g)^\g$ is a {\sf g.g.s.} for $\sigma$. 
The related Poisson-commutative subalgebra $\gZ$ appeared, in a way, in work
of Manakov~\cite{Man}. He stated that 
the restriction of $\gZ$ to $\g_0+\eta$ with $\eta\in\g_1$ is a Poisson-commutative 
subalgebra of $\gS(\g_0)$ of the maximal possible transcendence degree, which 
is $\boldsymbol{b}(\g_0)$.  Below we present a connection between his results and ours. 
We are grateful to E.B.\,Vinberg for bringing our attention to the fact that 
Manakov's construction involves an involution.  
\end{rmk}

Let $\ce_1\subset\g_1$ be a Cartan subspace. If $\eta\in\gt c_1$ is generic, then $\el:=\g_{0}^\eta$ is
reductive  
and it is also the centraliser of $\ce_1$ in $\g_0$. There are well-known equalities:
$\dim\g_1-\dim\g_0=\dim\gt l-\dim\ce_1$ and $\rk\el=\rk\g-\dim\ce_1$. 

\begin{thm}      \label{Man-in} 
For almost all $\eta\in\ce_1$, we have 
\begin{itemize}
\item[{\sf (i)}] \ $\trdeg \gZ_\eta = \boldsymbol{b}(\g_0)-\bb(\gt l)+\rk\gt l$;  
\item[{\sf (ii)}] \ if there is a {\sf g.g.s.} for $\sigma$, then $\tilde\gZ_\eta$ is a maximal Poisson-commutative 
subalgebra of $\gS(\g_0)^{\gt l}$. Besides, if  $\gt l$\/ is Abelian, then 
$\tilde\gZ_\eta$ is a maximal Poisson-commutative subalgebra of $\gS(\g_0)$. 
\end{itemize}
\end{thm}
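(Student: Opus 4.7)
The plan is to derive Theorem~\ref{Man-in} by combining the upper bound of Lemma~\ref{lm:kosik} with a direct analysis of differentials of $\gZ$ at generic points of the slice $\g_0^*+\eta$, and then to invoke Theorem~\ref{ppy-max} for the maximality claim in \textsf{(ii)}.

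For \textsf{(i)}, first observe that $\gZ_\eta$ is Poisson-commutative by Lemma~\ref{apl0} and lies inside $\gS(\g_0)^{\gt l}$ as already noted before that lemma. Since $\eta\in\ce_1$ is generic and semisimple, $\gt l=\g_0^\eta$ is reductive, so Lemma~\ref{lm:kosik} applied to $(\g_0,\gt l)$ immediately gives
\[
\trdeg \gZ_\eta \le \bb(\g_0) - \bb(\gt l) + \rk\gt l.
\]
For the matching lower bound I would fix a generic $x\in\g_0^*$ and use the identification of $\textsl{d}_x\gZ_\eta$ with the $\g_0$-projection of $\textsl{d}_{x+\eta}\gZ$ (which follows from the fact that, for $F\in\gZ$, the differential of the restriction $F\vert_{\g_0^*+\eta}$ at $x$ is the $\g_0$-component of $\textsl{d}_{x+\eta}F$). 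By Proposition~\ref{lm-trdeg}, the latter has dimension $(\dim\g_1+\rk\g+\rk\g_0)/2$ generically; by Lemma~\ref{lm-sum-reg}, its intersection with $\ker\pi_\infty(x+\eta)=\g_0\oplus\g_1^x$ has dimension $\rk\g$. A direct count of the $\g_0$-contribution to this projection, using the standard identities $\dim\g_0-\dim\gt l=\dim\g_1-\dim\ce_1$ and $\rk\g-\rk\gt l=\dim\ce_1$ characterizing the generic orbit through $\ce_1$, produces exactly $\bb(\g_0)-\bb(\gt l)+\rk\gt l$, giving the matching lower bound.

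For \textsf{(ii)}, the plan is to apply Theorem~\ref{ppy-max}. By Theorem~\ref{thm:maxim2}\textsf{(i)}, the differentials of a set of free homogeneous generators of $\tilde\gZ$ are linearly independent on a big open subset $U\subset\g^*$. I would verify that for generic $\eta\in\ce_1$ the slice $\g_0^*+\eta$ meets $U$ in a big open subset of the slice, via a codimension argument in the spirit of Lemma~\ref{lm-div}. Splitting the chosen generators of $\tilde\gZ$ into those inherited from $\gZ$ and those from $\gS(\g_0)^{\g_0}$ (the latter having differentials already in $\g_0$), one then shows that the differentials of the corresponding generators of $\tilde\gZ_\eta$ remain independent on a big open subset of $\g_0^*$. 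Theorem~\ref{ppy-max} then implies that $\tilde\gZ_\eta$ is algebraically closed in $\gS(\g_0)$, and combined with part \textsf{(i)} and the upper bound of Lemma~\ref{lm:kosik} this forces maximality inside $\gS(\g_0)^{\gt l}$. If $\gt l$ is Abelian, then $\bb(\gt l)=\rk\gt l$, so $\trdeg\tilde\gZ_\eta=\bb(\g_0)$ and the maximality upgrades to maximality in all of $\gS(\g_0)$.

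The main obstacle will be the transversality-and-projection step in \textsf{(ii)}: one must check that passing from $\g^*$ to the slice $\g_0^*+\eta$ and then projecting differentials from $\g$ onto $\g_0$ does not introduce new linear dependencies along a codimension-one subvariety of $\g_0^*$. This requires tracking the $\sigma$-eigenvalue split of Theorem~\ref{free-main} through the projection, in the spirit of the subregular analysis of Section~\ref{sect:4} (Lemmas~\ref{lm-div}--\ref{subreg-rk}), adapted to the affine slice rather than to $\g^*$ itself. The bookkeeping in \textsf{(i)}, matching the $\g_0$-projection of $\textsl{d}_{x+\eta}\gZ$ with the expected dimension, is conceptually straightforward once Proposition~\ref{lm-trdeg} and the kernel descriptions of Section~\ref{sect:2} are in hand.
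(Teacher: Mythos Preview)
Your outline matches the paper's: upper bound from Lemma~\ref{lm:kosik}, lower bound via differentials on the slice, then Theorem~\ref{ppy-max} for maximality. But two tactical steps diverge from the paper, and your versions have gaps.

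For {\sf (i)}, the lower bound needs $\dim(\textsl{d}_y\gZ\cap\g_1)\le\dim\ce_1$, since $\textsl{d}_y\gZ_\eta\simeq\textsl{d}_y\gZ/(\textsl{d}_y\gZ\cap\g_1)$. Your appeal to Lemma~\ref{lm-sum-reg} controls $\textsl{d}_y\gZ\cap\ker\pi_\infty(y)$, but $\ker\pi_\infty(y)=\g_0\oplus\g_1^x$ is not $\g_1$, so this does not bound the kernel of the projection to $\g_0$; the ``direct count'' you allude to is not supplied. The paper instead uses that $\gZ\subset\gS(\g)^{\g_0}$ forces $\textsl{d}_y\gZ$ to be orthogonal to $\g_0$ with respect to $\pi(y)$; hence $\textsl{d}_y\gZ\cap\g_1\subset\{v\in\g_1:\eta([\g_0,v])=0\}$, which is the orthogonal of $[\g_0,\eta]$ in $\g_1$ and has dimension exactly $\dim\ce_1$. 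The lower bound is then simply $\trdeg\gZ-\dim\ce_1$, and the arithmetic with the identities relating $\dim\g_i$, $\dim\gt l$, $\dim\ce_1$ finishes.

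For {\sf (ii)}, the ``transversality-and-projection'' obstacle you flag is exactly what the paper sidesteps. A {\sf g.g.s.} is equivalent to surjectivity of $r_1$ (Remark~\ref{equiv}{\sf (2)}), and this forces precisely $\dim\ce_1$ of the generators $H_j^\bullet$ to lie in $\gS(\g_1)$; these restrict to constants on $\g_0^*+\eta$. Combined with the orthogonality bound above (which gives $\dim(\textsl{d}_y\tilde\gZ\cap\g_1)=\dim\ce_1$, the differentials of those $\dim\ce_1$ generators accounting for the whole intersection), one sees directly that the \emph{remaining} generators of $\tilde\gZ$ restrict to free generators of $\tilde\gZ_\eta$ with linearly independent differentials on a big open subset of $\g_0^*$. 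No adaptation of the subregular analysis of Section~\ref{sect:4} is needed.
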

\begin{proof}
Suppose that $\eta$ is generic enough. Then \\ \indent
\textbullet\quad $\dim\textsl{d}_y \gZ = \frac{1}{2}(\dim\g_1+\rk\g+\rk\g_0)$ 
for $y$ in a dense open subset of $\g_0+ \eta$, and 
\\ \indent
\textbullet \quad $\dim\textsl{d}_y \tilde\gZ = \frac{1}{2}(\dim\g_1+\rk\g+ \rk\g_0)$ for $y$ in a {\bf big} open subset of $\g_0+ \eta$. 
\\
Note that the subspaces $\textsl{d}_y \gZ$ and $\textsl{d}_y \tilde\gZ$
are orthogonal to $\g_0$ w.r.t. the bilinear form $\pi(y)=y([\,\,,\,])$. 
Hence for both of them, the intersection with $\g_1$ has dimension at most $\dim\gt c_1$. 
It is easily seen that actually $\dim(\textsl{d}_y \tilde\gZ \cap \g_1)=\dim\gt c_1$.
Furthermore, 
$$
\textsl{d}_y \gZ_\eta \simeq \textsl{d}_y\gZ /(\textsl{d}_y\gZ \cap \g_1)
$$
and the same  formula holds for $\tilde\gZ$.  
Therefore
\begin{multline*}
\trdeg \gZ_\eta \ge \frac{1}{2}(\dim\g_1+\rk\g+\rk\g_0)- \dim\gt c_1 = \frac{1}{2}
(\dim\g_1-\dim\gt c_1 +\rk\g - \dim\gt c_1+\rk\g_0)  \\
=\frac{1}{2} (\dim\g_0-\dim\gt l +\rk\gt l+\rk\g_0)= \boldsymbol{b}(\g_0)-\bb(\gt l)+\rk\gt l.
\end{multline*}
Since $\gZ_\eta\subset \gS(\g_0)^{\gt l}$ and $\rk\gt l=\ind\gt l$, the transcendence degree of 
$\gZ_\eta$ cannot be larger than 
$ \boldsymbol{b}(\g_0)-\bb(\gt l)+\rk\gt l$ by \cite[Prop.~1.1]{m-y}.
Because $\tilde\gZ$ in an algebraic extension of $\gZ$, we also have 
$\trdeg \tilde\gZ_\eta=\trdeg \gZ_\eta$. 

The difference $\trdeg \tilde\gZ -\trdeg\tilde\gZ_\eta$ is equal to $\dim\gt c_1$. 
We consider the algebra $\tilde\gZ$ only if there is a {\sf g.g.s} for $\sigma$. 
In that case the map $r_1$ is surjective and therefore for certain members
$H_i$ of the {\sf g.g.s.} we have $H_i^\bullet\in\gS(\g_1)$~\cite{coadj}.
The number of such element is equal to $\dim\ce_1$, and they restrict to constants on 
$\g_0^*+\eta$.  

We see  that $\tilde\gZ_\eta$ is freely generated by 
$\tilde H_1,\ldots,\tilde H_{\rk\g_0}\in\gS(\g_0)^{\g_0}$ and 
the restrictions to $\eta+ \g_0$ of $(H_j)_{(i,d_j-i)}$ with $0<i<d_j$. 
Moreover, the differentials of these generators are linearly independent on a big 
open subset. According to Theorem~\ref{ppy-max}, $\tilde\gZ_\eta$  is an algebraically closed subalgebra 
of $\gS(\g_0)$. By a standard argument, it is a maximal Poisson-commutative subalgebra
of $\gS(\g_0)^{\gt l}$. 

Suppose that $\gt l$ is Abelian. Then $\dim\gt l=\rk\gt l$ and 
$\tilde\gZ_\eta$ is a Poisson-commutative subalgebra of $\gS(\g_0)$ of the maximal possible 
transcendence degree. Here $\tilde\gZ_\eta$  is maximal in $\gS(\g_0)$. 
\end{proof}

The statements of Theorem~\ref{Man-in} are not entirely satisfactory. 
It would be nice to have an explicit description of $\eta$ such that  the results hold.
In the original setting of Manakov, $\gt l$ is trivial and the equality  
$\trdeg \gZ_\eta=\bb(\g_0)$ holds for each regular $\eta\in\gt c_1$, 
see \cite{Boj}.
But a more precise assertion requires a further analysis of $\g^*_{(t),\sf sing}$  and we prefer to postpone 
it.

\appendix   
\section{On pencils of skew-symmetric forms}  
\label{sect:app}
\setcounter{equation}{0}

\noindent 
Here we gather some general facts concerning  skew-symmetric bilinear forms.
Let $\eus P$ be a two-dimensional vector space of (possibly degenerate) skew-symmetric bilinear
forms on a finite-dimensional vector space $V$. Set $m=\max_{A\in \eus P }\rk A$, and let 
$\eus P_{\sf reg}\subset \eus P$ be the set of all forms of rank $m$. Then $\eus P_{\sf reg}$ is a conical
open subset of $\eus P$.
For each $A\in \eus P$, let $\ker A\subset V$ be the kernel of $A$. 
Our object of interest is the subspace 
$L:=\sum_{A\in \eus P_{\sf reg}} \ker A$.  

\begin{lm}[{\cite[Appendix]{mrl}}]              \label{open}
If\/ $\Omega$ is a non-empty open subset of $\eus P_{\sf reg}$,
then $\sum_{A\in \Omega} \ker A=L$.
\end{lm}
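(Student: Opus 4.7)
The plan is to interpret the assignment $A\mapsto \ker A$ as a morphism into a Grassmannian and exploit irreducibility of $\eus P_{\sf reg}$.

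Set $d=\dim V-m$. For every $A\in\eus P_{\sf reg}$, the kernel $\ker A$ has dimension exactly $d$, so the collection $\{\ker A\}_{A\in\eus P_{\sf reg}}$ forms an algebraic subbundle of the trivial bundle $\eus P_{\sf reg}\times V$, and
$\kappa\!:\eus P_{\sf reg}\to\mathrm{Gr}(d,V)$, $A\mapsto\ker A$, is a morphism of algebraic varieties. On the other hand, $\eus P\cong\bbk^2$ is irreducible, and $\eus P_{\sf reg}$ is the Zariski open subset obtained by removing the vanishing locus of all $(m\times m)$-minors of a matrix representing a generic element of $\eus P$. Hence $\eus P_{\sf reg}$ is itself irreducible, and any non-empty open subset $\Omega\subseteq\eus P_{\sf reg}$ is Zariski dense in it.

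Given such an $\Omega$, the main step is to introduce $W:=\sum_{A\in\Omega}\ker A\subseteq V$ and consider
\[
    Z:=\{A\in\eus P_{\sf reg}\mid \ker A\subseteq W\}=\kappa^{-1}\bigl(\mathrm{Gr}(d,W)\bigr) .
\]
Since $\mathrm{Gr}(d,W)$ is a closed sub-Grassmannian of $\mathrm{Gr}(d,V)$ (cut out by linear equations in Plücker coordinates), $Z$ is Zariski closed in $\eus P_{\sf reg}$; by construction $\Omega\subseteq Z$. Density of $\Omega$ in the irreducible variety $\eus P_{\sf reg}$ then forces $Z=\eus P_{\sf reg}$, whence $\ker A\subseteq W$ for \emph{every} $A\in\eus P_{\sf reg}$. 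This yields $L=\sum_{A\in\eus P_{\sf reg}}\ker A\subseteq W=\sum_{A\in\Omega}\ker A$, while the opposite inclusion is trivial from $\Omega\subseteq\eus P_{\sf reg}$.

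The only non-elementary ingredient is the closedness of $Z$, which one can alternatively verify by hand.  After fixing a basis $A_0,A_\infty$ of $\eus P$ and using that rescaling does not alter the kernel, one may parametrise $\eus P_{\sf reg}$ by $A_t=A_0+tA_\infty$ for $t$ in a cofinite subset of $\BP^1$. On this locus the entries of a basis of $\ker A_t$ are given, via Cramer's rule on a non-singular $m\times m$ submatrix of the matrix of $A_t$, by rational functions of $t$; the condition $\ker A_t\subseteq W$ then translates into polynomial equations in $t$ that hold identically as soon as they hold on a non-empty open set of $t$'s.
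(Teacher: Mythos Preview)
Your argument is correct. The paper does not supply its own proof of this lemma; it merely cites \cite[Appendix]{mrl}, so there is nothing in the present text to compare against. Your Grassmannian argument is sound: the kernel map $\kappa\!:\eus P_{\sf reg}\to\mathrm{Gr}(d,V)$ is a morphism, the condition $\ker A\subseteq W$ pulls back from the closed subvariety $\mathrm{Gr}(d,W)\subset\mathrm{Gr}(d,V)$, and density of $\Omega$ in the irreducible $\eus P_{\sf reg}$ finishes the job. The alternative hands-on verification via a one-parameter family $A_t=A_0+tA_\infty$ and Cramer's rule is also valid and is closer in spirit to how such appendix lemmas are typically argued in the cited literature: one observes that the Pl\"ucker coordinates of $\ker A_t$ (equivalently, the entries of a basis obtained from a fixed nonsingular $m\times m$ block) are polynomial in $t$, so any linear constraint on $\ker A_t$ that holds for infinitely many $t$ holds for all regular $t$.
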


\begin{cl}          \label{cl-ort}
For  all $A, B \in \eus P\setminus\{0\}$, we have   $A(\ker B,L)=0$ and therefore $A(L,L)=0$. 
\end{cl}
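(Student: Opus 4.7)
The plan is to reduce the corollary to a single elementary observation about 2-dimensional pencils and then apply Lemma \ref{open} cleverly to dodge the possibility that $B$ is singular.

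\textbf{Key observation.} Since $\dim \eus P = 2$, any two linearly independent forms $B_1,B_2\in\eus P$ span the whole pencil. So if $x\in\ker B_1$ and $y\in\ker B_2$, then $B_1(x,y)=B_2(x,y)=0$, and by linearity $A(x,y)=0$ for \emph{every} $A\in\eus P$. This is the only real input beyond Lemma \ref{open}.

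\textbf{Proof of $A(\ker B, L)=0$.} Fix $A,B\in\eus P\setminus\{0\}$. Consider
\[
\Omega := \eus P_{\sf reg}\setminus \bbk B.
\]
Because $\eus P_{\sf reg}$ is a non-empty conical open subset of the 2-dimensional space $\eus P$, removing one line still leaves a non-empty open set (regardless of whether $B$ itself is regular). By Lemma \ref{open},
\[
L \;=\; \sum_{C\in\Omega}\ker C.
\]
Thus every $y\in L$ admits a decomposition $y=\sum_j y_j$ with $y_j\in\ker C_j$ and $C_j\in\Omega$. For each $j$, the forms $B$ and $C_j$ are linearly independent in $\eus P$ (by construction $C_j\notin\bbk B$). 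For any $x\in\ker B$ we have $B(x,y_j)=0=C_j(x,y_j)$, and the key observation then yields $A(x,y_j)=0$ for every $A\in\eus P$. Summing over $j$ gives $A(x,y)=0$.

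\textbf{Deriving $A(L,L)=0$.} By definition $L$ is spanned by the subspaces $\ker B$ with $B\in\eus P_{\sf reg}$, so $A(L,L)=0$ follows by bilinearity and skew-symmetry from the first assertion applied to each such $B$.

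The only subtle point, and the step where the trick with $\Omega$ matters, is the case when $B\notin\eus P_{\sf reg}$: then $\ker B$ need not be contained in $L$, so one cannot simply quote $A(L,L)=0$. Lemma \ref{open} handles this by allowing us to reroute the sum defining $L$ through regular forms $C_j$ that are guaranteed to be non-proportional to $B$, which is exactly what the two-dimensionality of $\eus P$ needs in order to force $A$ to vanish on the chosen pair.
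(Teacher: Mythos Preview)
Your proof is correct and follows essentially the same approach as the paper's: both invoke Lemma~\ref{open} to express $L$ via kernels of regular forms not proportional to $B$, then exploit that any $A\in\eus P$ is a linear combination of $B$ and such a form. Your packaging is slightly cleaner in that the ``key observation'' establishes $A(x,y)=0$ for \emph{all} $A\in\eus P$ at once, thereby avoiding the paper's preliminary case split on whether $A$ is proportional to $B$.
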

\begin{proof}
Clearly, the equality $A(\ker B,L)=0$ holds if $B$ is a scalar multiple of $A$. If not, then
we consider $L_{b}:=\ker(A+b B)$ for $b\in\bbk$. Here
\[
     A(\ker B,L_b)=(A+bB)(\ker B,L_b)-bB(\ker B,L_b)=0.
\]
By Lemma~\ref{open},  there is an open subset $\mathfrak O\subset\bbk$ such that
$L$ is spanned by $\{L_{b}\mid b\in \mathfrak O\}$. Hence
\[
    A(\ker B,L)= A(\ker B,\textstyle \sum_{b\in \mathfrak O}L_b)=0 .   \qedhere
\] 
\end{proof} 

Suppose that $C\in\eus P\setminus \eus P_{\sf reg}$. Then $U=\ker C$ may not be a subspace 
of $L$.  Take $A\in\eus P\setminus\{0\}$ that is not proportional to $C$ and restrict it to $U$. 
The resulting skew-symmetric form on $U$ does not change if we replace $A$ with any $A\,+ \,b\,C$, where  $b\in\bbk$.

\begin{lm}         \label{restr}
Let $C$, $A$, and $U$ be as above. Then $\rk(A|_{U})\le \dim U-(\dim V-m)$.
\end{lm}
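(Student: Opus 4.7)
The plan is to choose a subspace $U' \subset V$ complementary to $U = \ker C$ and analyse the block form of $A+tC$ with respect to the decomposition $V = U \oplus U'$, then extract the desired bound via a Schur-complement identity together with a limit as $t \to \infty$.

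First, I would verify that the restriction $C|_{U'}$ is a nondegenerate skew form on $U'$: if $u' \in U'$ satisfies $C(u', v') = 0$ for every $v' \in U'$, then since $C(u, \cdot) = 0$ for every $u \in U = \ker C$, the functional $C(u', \cdot)$ vanishes on all of $V$, forcing $u' \in \ker C \cap U' = U \cap U' = 0$. Hence $\dim U' = \rk C = \dim V - \dim U$.

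Next, picking bases of $U$ and $U'$, the Gram matrix of $A+tC$ takes the block form
\[
M(t) = \begin{pmatrix} P & Q \\ -Q^T & T(t) \end{pmatrix},
\]
where $P$ is the matrix of $A|_U$, $Q$ is the matrix of the pairing $A \colon U \times U' \to \bbk$, and $T(t)$ is the matrix of $(A+tC)|_{U'}$. Because $C$ vanishes on $U$, the blocks $P$ and $Q$ do not depend on $t$. The block $T(t)$ is invertible for all but finitely many $t$ (its leading term at $t = \infty$ is $t$ times the invertible matrix of $C|_{U'}$), and on this open set of values the Schur-complement formula yields
\[
\rk(A + tC) \;=\; \rk T(t) + \rk S(t) \;=\; \rk C + \rk S(t), \qquad S(t) = P + Q\,T(t)^{-1}\,Q^T.
\]
For generic $t$ we have $\rk(A+tC) = m$, so $\rk S(t) = m - \rk C$ on a non-empty Zariski-open subset of $\bbk$.

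Finally, I would substitute $s = 1/t$ to pass to the limit $t \to \infty$. Writing $M_A, M_C$ for the matrices of $A|_{U'}, C|_{U'}$, one has $T(1/s)^{-1} = s\,(sM_A + M_C)^{-1}$, a regular matrix-valued function of $s$ in a Zariski-open neighbourhood of $s = 0$ (since $M_C$ is invertible) that vanishes at $s = 0$. Hence $\widetilde S(s) := S(1/s)$ extends regularly to $s = 0$ with $\widetilde S(0) = P$. On a dense open subset of this neighbourhood the rank $\rk \widetilde S(s)$ equals $m - \rk C$, and since the locus $\{\rk \leq m - \rk C\}$ is Zariski closed, the bound persists at $s = 0$, giving
\[
\rk(A|_U) \;=\; \rk P \;\leq\; m - \rk C \;=\; \dim U - (\dim V - m).
\]
The one delicate point is setting up the Schur complement so that the contribution $\rk C$ separates cleanly from the residual rank $\rk(A|_U)$; the remainder is block-matrix bookkeeping combined with Zariski lower semi-continuity of rank applied at $t = \infty$.
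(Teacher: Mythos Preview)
Your argument is correct. The Schur-complement identity with the substitution $s=1/t$ indeed gives a regular family $\widetilde S(s)$ with $\widetilde S(0)=P$, and lower semicontinuity of rank then yields $\rk P\le m-\rk C=\dim U-(\dim V-m)$.

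The paper takes a different, more geometric route. It does not decompose $V$ at all; instead it invokes the earlier corollary $A(\ker B,L)=0$ for the subspace $L=\sum_{B\in\eus P_{\sf reg}}\ker B$, and then picks a curve $\tau\colon\bbk^\times\to\eus P_{\sf reg}$ with $\tau(t)\to C$. The limit $U_0:=\lim_{t\to 0}\ker\tau(t)$ in the Grassmannian of $(\dim V-m)$-planes lands inside $U=\ker C$, and since each $\ker\tau(t)\subset L$ is $A$-orthogonal to $U$, so is $U_0$. Hence $U_0\subset\ker(A|_U)$ and $\dim\ker(A|_U)\ge\dim V-m$. So both proofs pass to a limit along the pencil toward $C$, but the paper tracks a limit of \emph{kernels} while you track a limit of \emph{Schur complements}. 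Your approach is self-contained and avoids Grassmannians and the auxiliary space $L$; the paper's approach is shorter once those tools are in hand and, as a bonus, exhibits an explicit $(\dim V-m)$-dimensional subspace of $\ker(A|_U)$ rather than just the rank bound.
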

\begin{proof}
By Corollary~\ref{cl-ort}, we have $A(U,L)=0$. Set $r=\dim V-m$. 
Because  $\eus P$ is irreducible, $\overline{\eus P_{\sf reg}}=\eus P$ and 
there is a curve $\tau\!:\bbk^\times \to \eus P_{\sf reg}$
such that $\lim_{t\to 0} \tau(t) = C$. Hence 
\[
\lim_{t\to 0}(\ker\tau(t)) \subset \ker C,
\]
where the limit is taken in the Grassmannian of the $r$-dimensional subspaces of $V$.  
Set $U_0:=\lim_{t\to 0}(\ker\tau(t))$.
If $t\ne 0$, then 
$\ker\tau(t)\subset L$  and $A(\ker\tau(t),U)=0$. Hence also 
$A(U_0,U)=0$ and $U_0\subset \ker (A|_U)$. 
It remains to notice that $\dim U_0=r$. 
\end{proof}

\noindent 
{\bf Remark.} Lemma~\ref{restr} implies Vinberg's inequality: if $\q$ is Lie algebra, then $\ind\gt q^\gamma\ge \ind\gt q$ for 
any $\gamma\in\gt q^*$, see~\cite[Cor.\,1.7]{p03}. 
 
\begin{thm}\label{sum-dim}
Suppose that $\eus P\setminus \eus P_{\sf reg}= \bbk C$ with $C\ne 0$ and $U=\ker C$. Keep the notation of 
Lemma~\ref{restr} and suppose further that $\rk(A|_U)=\dim U {-}\dim V+ m$.
Then $\dim (L\cap U)=\dim V-m$ and $\dim L = (\dim V-m)+\frac{1}{2}(\dim V - \dim U)$.
\end{thm}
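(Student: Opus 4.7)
The first equality $\dim(L \cap U) = \dim V - m$ (set $r := \dim V - m$) is a short sandwich. Corollary~\ref{cl-ort} applied with $B = C$ gives $A(L, U) = 0$, so $L \cap U \subseteq \ker(A|_U)$, which has dimension exactly $r$ by the rank hypothesis. For the reverse inequality, and just as in the proof of Lemma~\ref{restr}, the Grassmannian limit $U_0 := \lim_{s\to 0}\ker(sA + C)$ is an $r$-dimensional subspace that lies in $U = \ker C$ by continuity and in $L$ because $L$ is a finite-dimensional (hence closed) subspace containing each $\ker(sA + C)$. Both steps use that $sA + C \in \eus P_{\sf reg}$ for $s \ne 0$, which is the content of $\eus P \setminus \eus P_{\sf reg} = \bbk C$.

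For the second equality, set $s := \dim V - \dim U$. I would first establish the structural fact that all orthogonals of $L$ in the pencil coincide: $N := L^{\perp_B}$ is independent of $B \in \eus P \setminus \{0\}$. All such $L^{\perp_B}$ have common dimension $\dim V - \dim L + r$ (for regular $B$ this uses $\ker B \subseteq L$; for $B = C$ it follows from the first equality via $L^{\perp_C} = \pi^{-1}(\pi(L)^{\perp_{\bar C}})$ with $\pi: V \to V/U$). The inclusion $L^{\perp_B} \subseteq L^{\perp_{B'}}$ with $B = A + tC$ and $B' = A + t'C$ is a one-line pencil computation: for $v \in L^{\perp_B}$ and $l \in \ker(A + t''C)$ with $t'' \ne t$, the identity $A(v, l) = -t''C(v, l)$ (derived from $(A + t''C)l = 0$) combined with $B(v, l) = 0$ forces $(t - t'')C(v, l) = 0$, and continuity in $t''$ over the regular locus extends this to all of $L$, giving $v \in L^{\perp_C}$ and then $v \in L^{\perp_{B'}}$.

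Form the quotient $\hat V := N/L$. For each $B \in \eus P_{\sf reg}$, the radical of $B|_N$ is $(L + \ker B) \cap N = L$, so $B$ descends to a non-degenerate skew form $\bar B$ on $\hat V$; similarly $\bar C$ descends with $\ker \bar C = (L + U)/L \cong U/\ker(A|_U) =: \hat U$ of dimension $d - r$. The rank hypothesis ensures $\bar A$ is non-degenerate on $\hat U$, so one has a direct sum decomposition $\hat V = \hat U \oplus \hat W$ with $\hat W := \hat U^{\perp_{\bar A}}$ and $\bar C$ non-degenerate on $\hat W$. Since $\overline{A + tC} = \bar A + t\bar C$ is non-degenerate on $\hat V$ for every $t \in \bbk$, the polynomial $\det(\bar A + t\bar C) = \det(\bar A)\prod_i(1 + t\lambda_i)$ has no zeros in $\bbk$, so each eigenvalue $\lambda_i$ of the operator $\bar A^{-1}\bar C$ vanishes. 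But $\bar A^{-1}\bar C$ preserves $\hat W$ and restricts there to an isomorphism, so nilpotence forces $\hat W = 0$; hence $\dim \hat V = d - r$, i.e., $\dim V + r - 2\dim L = d - r$, giving $\dim L = r + s/2$.

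The main obstacle is the identification $N = L^{\perp_B}$ in the second paragraph, which is what allows the quotient $\hat V$ to carry both forms compatibly. The rank hypothesis $\rk(A|_U) = \dim U - r$ enters twice: in the first paragraph to pin down $L \cap U$, and in the third paragraph to ensure $\bar A$ is non-degenerate on $\hat U$; without the latter, $\hat V$ would not split as $\hat U \oplus \hat W$ and the eigenvalue-nilpotency argument would collapse. The algebraic closedness of $\bbk$, standing throughout the paper, is also used essentially in that final step.
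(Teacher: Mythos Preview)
Your proof is correct and takes a genuinely different route from the paper's.

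The paper invokes the Jordan--Kronecker canonical form of a pair of regular forms $A,B$ in the pencil (citing~\cite{JK}): it decomposes $V$ into blocks, observes that the singular direction $\bbk C$ forces all Jordan eigenvalues to coincide, and then the rank hypothesis on $A|_U$ pins each Jordan block down to dimension~$2$. The dimension of $L$ is then read off from the Kronecker blocks by direct counting.

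Your argument is intrinsic and avoids the classification entirely. The key structural observation---that the orthogonal $N=L^{\perp_B}$ is independent of $B\in\eus P\setminus\{0\}$---is exactly what replaces the block decomposition: it lets you pass to the quotient $\hat V=N/L$, on which all regular forms descend to non-degenerate forms and $C$ descends with kernel $\hat U\cong U/\ker(A|_U)$. The endgame via nilpotency of $\bar A^{-1}\bar C$ is clean: since $\det(\bar A+t\bar C)$ has no zeros in the algebraically closed field $\bbk$, the operator is nilpotent, yet it restricts to an automorphism of the $\bar A$-complement $\hat W$ of $\hat U$, forcing $\hat W=0$. One small remark: in the second paragraph the appeal to ``continuity in $t''$'' is unnecessary---Lemma~\ref{open} already gives $L=\sum_{t''\ne t}\ker(A+t''C)$, so the case $t''=t$ never arises.

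What each approach buys: the paper's use of Jordan--Kronecker makes the structure maximally explicit and the counting transparent, at the cost of importing a nontrivial classification. Your approach is self-contained, highlights the invariance of $N$ as the governing fact, and makes the role of algebraic closedness completely visible; it would also adapt more readily to settings where a canonical form is unavailable.
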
 
\begin{proof}
Let $B\in\eus P_{\sf reg}$ be non-proportional to $A$.  Given $A,B\in \eus P_{\sf reg}$,
there is the so-called {\it Jordan--Kronecker canonical form\/} of $A$ and $B$, see \cite{JK}.  
Namely, $V=V_1\oplus\dots\oplus V_d$, where $A(V_i,V_j)=0=B(V_i,V_j)$ for $i\ne j$, and 
accordingly, $A=\sum A_i$ and $B=\sum B_i$. There are two possibilities for 
$(A_i,B_i)$, one obtains either a {\it Kronecker} or a {\it Jordan block\/} here, see figures below. 
Assume that $\dim V_i>0$ for each $i$. 

\vskip-10pt
\[
\hskip-30pt
\begin{array}{lcc}
  &  A_i  &  B_i     \\  & & \\
\begin{array}{c} \text{A Jordan block } \\ (\lambda_i\in \bbk) \end{array}: &
\begin{pmatrix}
     &   \!\!\! \eus J(\lambda_i) \\  
   \!  -\eus J^\top(\lambda_i) &
 \end{pmatrix} &
 \begin{pmatrix}
   &  -\mathrm{I}  \\  
   \mathrm{I} &
 \end{pmatrix} , \\     & &
 \\

\begin{array}{c} \text{a Kronecker} \\
\text{block} \end{array}: &
 \!\!\!\!\!\!\!\!\!\!\!\! \begin{pmatrix}
     &
\hskip-10pt \boxed{ \begin{matrix}
  1 & 0  & & \\
      &\ddots & \ddots & \\
     &           &  1 & 0
     \end{matrix}}
     \\
\boxed{\begin{matrix}
\!\!\! -1  &             &   \\
  0 & \ddots &   \\
     & \ddots &\!\!\! -1 \\
     &             &0
  \end{matrix}
  }\end{pmatrix}
 &
  \begin{pmatrix}
     &
\hskip-10pt \boxed{ \begin{matrix}
  0 & 1  & & \\
      &\ddots & \ddots & \\
     &           &  0 & 1
     \end{matrix}}
     \\
\boxed{\begin{matrix}
 0  &             &   \\
  \!\!\! -1 & \ddots &   \\
     & \ddots &0 \\
     &             &\!\!\! -1
  \end{matrix}
  }\end{pmatrix},
\end{array}
\]
where $\eus J(\lambda_i)=\begin{pmatrix}
\lambda_i & 1 & &\\
& \lambda_i & \ddots & \\
& & \ddots & 1 \\ & & & \lambda_i
\end{pmatrix}.
$   In general, there can occur ``Jordan blocks with $\lb_i=\infty$'', but this is not the case here, since $B\in\eus P$ is assumed to be regular.

Note that if $V_i$ gives rise to a Jordan block, then $\dim V_i$ is even and  
both $A_i$ and $B_i$ are non-degenerate on $V_i$. For a Kronecker block, 
$\dim V_i=2k_i+1$, $\rk A_i=2k_i=\rk B_i$ and the same holds for every non-zero linear combination 
of $A_i$ and $B_i$. 

There is a unique $\lb\in\bbk\setminus\{0\}$ such that $C=A+\lb B$. This $\lb$ can be determined as 
the root of the equation $\det(A_i+\lb B_i)=0$ for any Jordan block $(A_i,B_i)$. This readily follows from  
the uniqueness of the singular line $\bbk C\subset \eus P$. On the other hand, the above matrices show 
that the root corresponding to $(A_i,B_i)$ is $\lb_i$.
Therefore, all $\lb_i$'s are equal and
coincide with $\lb$.  
 
Let us assume that $V_i$ defines a Kronecker block if and only if $1\le i\le d'$. Then necessarily 
$d'=\dim V-m$. Let $\ker(A_i+bB_i)\subset V_i$ be the kernel of the bilinear form $A_i+bB_i$. 
Then 
$$
   L=\bigoplus_{i=1}^{d'}\sum_{b:\,A+bB\in\eus P_{\sf reg}} \ker(A_i+bB_i)=: \bigoplus_{i=1}^{d'} L_i.
$$ 
It follows from the above matrix form of a Kronecker block that $\dim L_i=k_i+ 1$, cf. also \cite[Appendix]{mrl}.   

Set $C_i=A_i+\lambda B_i$ for each $i\in \{1,2,\dots,d\}$. It is a bilinear form on $V_i$.

\textbullet \quad
If $i\le d'$, then  $\dim\ker C_i=1$.  Therefore $\ker C_i\subset L_i$ and $\dim(\ker C\cap L)=d'$. 

\textbullet \quad
If $i>d'$,  then $\dim\ker C_i=2$. 
\\
Hence $\dim U = 2(d-d')+d'=2d-d'$. 
Since $U=\bigoplus_{i=1}^d \ker C_i$ and the spaces $\{\ker C_i\}$ are pairwise orthogonal w.r.t. any
form in $\eus P$, we have $A(\ker C_j,U)=0$ for $j\le d'$. Hence the condition 
$\rk(A|_U)=\dim U-\dim V+ m$ implies that $A_i$ is non-degenerate on 
$\ker C_i$ for any $i>d'$. The explicit matrix form of a Jordan block shows that $\ker C_i$ is 
spanned by two middle basis vectors of $V_i$. Therefore, $A_i$ is non-degenerate on 
$\ker C_i$ if and only if  $\dim V_i=2$, and hence $C_i=0$.  

Summing up, we obtain
$$
  \dim L=\sum_{i=1}^{d'} (k_i+1) = d'+ \sum_{i=1}^{d'} \frac{1}{2}\rk C_i=d'+\frac{1}{2}\rk C=(\dim V{-}m)+\frac{1}{2}(\dim V{-}\dim U).
$$ 
This completes the proof. 
\end{proof}


\begin{thebibliography}{Pa95a}

\bibitem[B91]{bols}
{\sc A.~Bolsinov}. Commutative families of functions related to consistent Poisson brackets, 
{\it Acta Appl. Math.}, {\bf 24}, no.\,3 (1991), 253--274.

\bibitem[BB02]{bol-bor}
{\sc A.~Bolsinov} and {\sc A.~Borisov}.
Compatible Poisson brackets on Lie algebras. (Russian) {\it Mat. Zametki} {\bf 72}\,(2002), no. 1, 11--34; translation in {\it Math. Notes} {\bf 72}\,(2002), no. 1-2, 10--30.

\bibitem[BK79]{bokr}
{\sc W.~Borho} and {\sc H.~Kraft}. 
\"Uber Bahnen und deren Deformationen bei linearen Aktionen reduktiver Gruppen,
{\it Comment. Math. Helv.}  {\bf 54}\,(1979), no.\,1, 61--104.

\bibitem[DZ05]{duzu}
{\sc J.-P.~Dufour} and  {\sc N.T.~Zung}.  ``Poisson structures and their normal forms''. Progress in Mathematics, {\bf 242}. Birkh\"auser Verlag, Basel, 2005.

\bibitem[FFR]{FFR}
{\sc B.~Feigin}, {\sc E.~Frenkel}, and {\sc L.~Rybnikov}.
Opers with irregular singularity and spectra of the shift of argument subalgebra,
{\it Duke Math. J.}, {\bf 155}\,(2010), no.\,2, 337--363.

\bibitem[GDI]{Boj}
{\sc B.~Gaji\'c, V.~Dragovi\'c,} and  {\sc B.~Jovanovi\'c}.
On the completeness of the Manakov integrals,
{\it J. Math. Sci.}, {\bf 223}:6 (2017), 675--685.

\bibitem[GT50]{gt-1}
{\sc I.M.~Gelfand} and {M.L.~Tsetlin}. 
Finite-dimensional representations of the group of unimodular matrices. (Russian) 
{\it Doklady Akad. Nauk SSSR} (N.S.) {\bf 71}\,(1950), 825--828.
English transl. in: {\sc I.M.~Gelfand}, Collected Papers, vol. II, Springer-Verlag, Berlin, 1988, pp. 653--656.

\bibitem[GT50']{gt-2}
{\sc I.M.~Gelfand} and {M.L.~Tsetlin}. 
Finite-dimensional representations of groups of orthogonal matrices. (Russian) 
{\it Doklady Akad. Nauk SSSR} (N.S.) {\bf 71}\,(1950), 1017--1020. English transl. in:
{I.M.~Gelfand}, Collected Papers, vol. II, Springer-Verlag, Berlin, 1988, pp. 657--661.

\bibitem[K63]{ko63}
{\sc B.\,Kostant}. Lie group representations on
polynomial rings, {\it  Amer. J. Math.}, {\bf 85}\,(1963), 327--404.

\bibitem[M76]{Man}
{\sc S.V.\,Manakov}.
 Note on the integration of Euler's equations of the dynamics of an $n$-dimensional rigid body,
{\it Funct. Anal. Appl.}, {\bf 10}\,(1976), no.4, 93--94 (in Russian).

\bibitem[MF78]{mf}
{\sc A.S.\,Mishchenko} and {\sc A.T.\,Fomenko}.
Euler equation on finite-dimensional Lie groups,
{\it Math. USSR-Izv.} {\bf 12}\,(1978), 371--389.

\bibitem[M07]{book}
{\sc A. Molev}, 
``Yangians and Classical Lie Algebras''. Mathematical Surveys and Monographs, {\bf 143},  
American Mathematical Society, Providence, RI, 2007.

\bibitem[MO00]{mo}
{\sc A.~Molev} and {\sc G.~Olshanski}.
Centralizer construction for twisted Yangians, 
{\it Selecta Math.}, {\bf 6}\,(2000), no.\,3, 269--317.  

\bibitem[MY]{m-y}
{\sc A.~Molev} and {\sc O.~Yakimova}. 
Quantisation and nilpotent limits of Mishchenko--Fomenko subalgebras, 
(\href{http://arxiv.org/abs/1711.03917}{\tt \bfseries arxiv:1711.03917v1 [math.RT]}, 32 pp.)

\bibitem[O91]{o}
{\sc G.I.~Olshanski}. 
Representations of infinite-dimensional classical groups, limits of enveloping algebras, and 
Yangians, In: {\it Topics in Representation Theory}, Advances in Soviet Math. (A.A. Kirillov ed.), 
vol. {\bf 2}, AMS, Providence   RI, 1991, 1--66.  
	
\bibitem[P03]{p03} 
{\sc D.~Panyushev}. 
The index of a Lie algebra, the centraliser of a nilpotent element,
and the normaliser of the centraliser, {\it Math. Proc. Camb. Phil. Soc.},
{\bf 134}, Part\,1 (2003), 41--59.

\bibitem[P07]{p05} 
{\sc D.~Panyushev}. Semi-direct products of Lie algebras and their invariants,
{\it Publ. RIMS}, {\bf 43}, no.\,4 (2007), 1199--1257.

\bibitem[P07']{coadj} 
{\sc D.~Panyushev}.
On the coadjoint representation of $\mathbb Z_2$-contractions of reductive Lie algebras,
{\it Adv. Math.}, {\bf 213}\,(2007), 380--404. 

\bibitem[PPY]{trio} 
{\sc D.~Panyushev}, {\sc A.~Premet} and {\sc O.~Yakimova}. 
On symmetric invariants of centralisers in reductive  Lie algebras, 
{\it J. Algebra}, {\bf 313}\,(2007), 343--391.

\bibitem[PY08]{mrl} {\sc D.~Panyushev} and {\sc O.~Yakimova}.  
The argument shift method and maximal commutative subalgebras of Poisson algebras, 
{\it Math. Res. Letters}, {\bf 15}, no.\,2 (2008), 239--249.

\bibitem[PY12]{alafe} {\sc D.~Panyushev} and {\sc O.~Yakimova}. 
On a remarkable contraction of semisimple Lie algebras,  
{\it Annales Inst. Fourier} (Grenoble), {\bf 62}, no.\,6 (2012), 2053--2068.

\bibitem[R06]{r:si}
{\sc L. G. Rybnikov},
{The shift of invariants method and the Gaudin model},
{\it Funct. Anal. Appl.} {\bf 40}\,(2006), 188--199.

\bibitem[Sl80]{815} 
{\sc P.~Slodowy}. 
"Simple singularities and simple algebraic groups",
Lect. Notes Math. {\bf 815},  Berlin: Springer, 1980.

\bibitem[S74]{springer} 
{\sc T.A.~Springer}.
Regular elements of finite reflection groups, {\it  Invent. Math.}
{\bf 25}\,(1974), 159--198.

\bibitem[T91]{JK}
{\sc R.C.\,Thompson}. 
Pencils of complex and real symmetric and skew matrices, 
{\it Linear Algebra and its Appl.},  {\bf 147}\,(1991), 323--371. 

\bibitem[V68]{va} 
{\sc V.S.~Varadarajan}. 
On the ring of invariant polynomials on a semisimple Lie algebra, 
{\it Amer. J. Math.}, {\bf 90}\,(1968), 308--317. 

\bibitem[Vi91]{v:sc}
{\sc E.B.~Vinberg}.
{Some commutative subalgebras of a universal enveloping algebra},
{\it Math. USSR-Izv.} {\bf 36} (1991), 1--22.

\bibitem[VP89]{VP}
{\rusc {E1}.B.~Vinberg, V.L.~Popov}. {\rusi ``Teoriya Invariantov"}, {\rus V:  
Sovrem{.} probl{.} matematiki. Fundamental{\cprime}nye napravl., t.\,55,
str.}\,137--309. {\rus Moskva: VINITI} 1989 (Russian).
English translation: 
{\sc V.L.~Popov} and {\sc E.B.~Vinberg}. ``Invariant theory", In: {\it  Algebraic Geometry IV}
(Encyclopaedia Math. Sci., vol.~55, pp.123--284) 
Berlin Heidelberg New York: Springer 1994.

\bibitem[Y14]{contr}
{\sc O.~Yakimova}.  
One-parameter contractions of Lie-Poisson brackets, 
{\it J. Eur. Math. Soc.}, {\bf 16}\,(2014), 387--407.

\bibitem[Y17]{Y-imrn}
{\sc O.~Yakimova}. 
Symmetric invariants of $\mathbb Z_2$-contractions and other semi-direct products, 
{\it Int.  Math. Res. Notices}, (2017) 2017 (6): 1674--1716. 

\end{thebibliography}
\end{document}